\tikzstyle{vertex}=[ circle, fill, draw, inner sep=0pt, minimum size=4pt,]
\tikzstyle{edge}= [thick]
\newtheorem*{cor}{Corollary}
\newtheorem*{lem}{Lemma}
\newtheorem*{prop}{Proposition}
\theoremstyle{definition} 
\theoremstyle{definition}
\newtheorem{thm}{Theorem}
\newtheorem*{thm*}{Theorem}
\newtheorem*{rem}{Remark}
\newenvironment{pf}{\proof}{\endproof}
\newcounter{cnt}
\newenvironment{enumerit}{\begin{list}{{\hfill\rm(\roman{cnt})\hfill}}{%
\settowidth{\labelwidth}{{\rm(iv)}}\leftmargin=\labelwidth%
\advance\leftmargin by \labelsep\rightmargin=0pt\usecounter{cnt}}}{\end{list}} \makeatletter
\def\mydggeometry{\makeatletter\dg@YGRID=1\dg@XGRID=20\unitlength=0.003pt\makeatother}
\makeatother \theoremstyle{remark}
\numberwithin{equation}{section}
 \DeclareMathOperator{\Ht}{ht}
\begin{document}

\newcommand{\thmref}[1]{Theorem~\ref{#1}}
\newcommand{\secref}[1]{Section~\ref{#1}}
\newcommand{\lemref}[1]{Lemma~\ref{#1}}
\newcommand{\propref}[1]{Proposition~\ref{#1}}
\newcommand{\corref}[1]{Corollary~\ref{#1}}
\newcommand{\remref}[1]{Remark~\ref{#1}}
\newcommand{\defref}[1]{Definition~\ref{#1}}
\newcommand{\er}[1]{(\ref{#1})}
\newcommand{\id}{\operatorname{id}}
\newcommand{\ord}{\operatorname{\emph{ord}}}
\newcommand{\sgn}{\operatorname{sgn}}
\newcommand{\wt}{\operatorname{wt}}
\newcommand{\tensor}{\otimes}
\newcommand{\from}{\leftarrow}
\newcommand{\nc}{\newcommand}
\newcommand{\rnc}{\renewcommand}
\newcommand{\dist}{\operatorname{dist}}
\newcommand{\qbinom}[2]{\genfrac[]{0pt}0{#1}{#2}}
\nc{\cal}{\mathcal} \nc{\goth}{\mathfrak} \rnc{\bold}{\mathbf}
\renewcommand{\frak}{\mathfrak}
\newcommand{\supp}{\operatorname{supp}}
\newcommand{\Irr}{\operatorname{Irr}}
\newcommand{\psym}{\mathcal{P}^+_{K,n}}
\newcommand{\psyml}{\mathcal{P}^+_{K,\lambda}}
\newcommand{\psymt}{\mathcal{P}^+_{2,\lambda}}
\renewcommand{\Bbb}{\mathbb}
\nc\bomega{{\mbox{\boldmath $\omega$}}} \nc\bpsi{{\mbox{\boldmath $\Psi$}}}
 \nc\balpha{{\mbox{\boldmath $\alpha$}}}
 \nc\bbeta{{\mbox{\boldmath $\beta$}}}
 \nc\bpi{{\mbox{\boldmath $\pi$}}}
  \nc\bpis{{\mbox{\boldmath \scriptsize$\pi$}}}
 \nc\bullets{{\mbox{\scriptsize $\bullet$}}}
 
  \nc\bvarpi{{\mbox{\boldmath $\varpi$}}}

\nc\bepsilon{{\mbox{\boldmath $\epsilon$}}}
  \nc\bomegas{{\mbox{\boldmath\scriptsize $\omega$}}}
  \nc\bepsilons{{\mbox{\boldmath \scriptsize$\epsilon$}}}

  \nc\bxi{{\mbox{\boldmath $\xi$}}}
\nc\bmu{{\mbox{\boldmath $\mu$}}} \nc\bcN{{\mbox{\boldmath $\cal{N}$}}} \nc\bcm{{\mbox{\boldmath $\cal{M}$}}} \nc\blambda{{\mbox{\boldmath
$\lambda$}}}

\newcommand{\Tmn}{\bold{T}_{\lambda^1, \lambda^2}^{\nu}}

\newcommand{\lie}[1]{\mathfrak{#1}}
\newcommand{\ol}[1]{\overline{#1}}
\makeatletter
\def\section{\def\@secnumfont{\mdseries}\@startsection{section}{1}%
  \z@{.7\linespacing\@plus\linespacing}{.5\linespacing}%
  {\normalfont\scshape\centering}}
\def\subsection{\def\@secnumfont{\bfseries}\@startsection{subsection}{2}%
  {\parindent}{.5\linespacing\@plus.7\linespacing}{-.5em}%
  {\normalfont\bfseries}}
\makeatother
\def\subl#1{\subsection{}\label{#1}}
 \nc{\Hom}{\operatorname{Hom}}
  \nc{\mode}{\operatorname{mod}}
\nc{\End}{\operatorname{End}} \nc{\wh}[1]{\widehat{#1}} \nc{\Ext}{\operatorname{Ext}}
 \nc{\ch}{\operatorname{ch}} \nc{\ev}{\operatorname{ev}}
\nc{\Ob}{\operatorname{Ob}} \nc{\soc}{\operatorname{soc}} \nc{\rad}{\operatorname{rad}} \nc{\head}{\operatorname{head}}
\def\Im{\operatorname{Im}}
\def\gr{\operatorname{gr}}
\def\mult{\operatorname{mult}}
\def\Max{\operatorname{Max}}
\def\ann{\operatorname{Ann}}
\def\sym{\operatorname{sym}}
\def\loc{\operatorname{loc}}
\def\Res{\operatorname{\br^\lambda_A}}
\def\und{\underline}
\def\Lietg{$A_k(\lie{g})(\bs_\xiigma,r)$}
\def\res{\operatorname{res}}

 \nc{\Cal}{\cal} \nc{\Xp}[1]{X^+(#1)} \nc{\Xm}[1]{X^-(#1)}
\nc{\on}{\operatorname} \nc{\Z}{{\bold Z}} \nc{\J}{{\cal J}} \nc{\C}{{\bold C}} \nc{\Q}{{\bold Q}}
\renewcommand{\P}{{\cal P}}
\nc{\N}{{\Bbb N}} \nc\boa{\bold a} \nc\bob{\bold b} \nc\boc{\bold c} \nc\bod{\bold d} \nc\boe{\bold e} \nc\bof{\bold f} \nc\bog{\bold g}
\nc\boh{\bold h} \nc\boi{\bold i} \nc\boj{\bold j} \nc\bok{\bold k} \nc\bol{\bold l} \nc\bom{\bold m} \nc\bon{\bold n} \nc\boo{\bold o}
\nc\bop{\bold p} \nc\boq{\bold q} \nc\bor{\bold r} \nc\bos{\bold s} \nc\boT{\bold t} \nc\boF{\bold F} \nc\bou{\bold u} \nc\bov{\bold v}
\nc\bow{\bold w} \nc\boz{\bold z} \nc\boy{\bold y} \nc\ba{\bold A} \nc\bb{\bold B} \nc\bc{\mathbb C} \nc\bd{\bold D} \nc\be{\bold E} \nc\bg{\bold
G} \nc\bh{\bold H} \nc\bi{\bold I} \nc\bj{\bold J} \nc\bk{\bold K} \nc\bl{\bold L} \nc\bm{\bold M}  \nc\bo{\bold O} \nc\bp{\bold
P} \nc\bq{\bold Q} \nc\br{\bold R} \nc\bs{\bold S} \nc\bt{\bold T} \nc\bu{\bold U} \nc\bv{\bold V} \nc\bw{\bold W} \nc\bx{\bold
x} \nc\KR{\bold{KR}} \nc\rk{\bold{rk}} \nc\het{\text{ht }}
\nc\bz{\mathbb Z}
\nc\bn{\mathbb N}

\nc\toa{\tilde a} \nc\tob{\tilde b} \nc\toc{\tilde c} \nc\tod{\tilde d} \nc\toe{\tilde e} \nc\tof{\tilde f} \nc\tog{\tilde g} \nc\toh{\tilde h}
\nc\toi{\tilde i} \nc\toj{\tilde j} \nc\tok{\tilde k} \nc\tol{\tilde l} \nc\tom{\tilde m} \nc\ton{\tilde n} \nc\too{\tilde o} \nc\toq{\tilde q}
\nc\tor{\tilde r} \nc\tos{\tilde s} \nc\toT{\tilde t} \nc\tou{\tilde u} \nc\tov{\tilde v} \nc\tow{\tilde w} \nc\toz{\tilde z} \nc\woi{w_{\omega_i}}
\nc\chara{\operatorname{Char}}
\title[Tensor products  and $q$-characters of HL-modules . ]
{Tensor products and $q$-characters  of HL-modules\\   and monoidal categorifications}
\author[Brito\,\, and Chari]{Matheus Brito and Vyjayanthi Chari}
\address{Departamento de Matem\'atica, UFPR, Curitiba - PR - Brazil, 81530-015.}\thanks{MB was partially 
supported by CNPq, grant 205281/2014-1}
\email{mbrito@ufpr.br}
\address{Department of Mathematics\\ 
  University of California, Riverside\\ 
  900 University Ave., Riverside, CA 92521} \thanks{VC was partially supported by DMS	1719357}
\email{chari@math.ucr.edu}\maketitle
%
%
\begin{abstract} We study certain monoidal subcategories  (introduced by David Hernandez and Bernard Leclerc)  of finite--dimensional representations of a quantum affine algebra of type $A$.  We
classify the set of prime representations in these subcategories and give necessary and sufficient conditions for a tensor product of two prime representations to be irreducible. In the case of a reducible tensor product we 
describe the prime decomposition of the simple factors.  As a consequence we prove that these subcategories are monoidal categorifications of a cluster algebra of type A with coefficients. 
    
\end{abstract}
\section*{Introduction}

The study of the category $\cal F$ finite--dimensional representations of a quantum affine algebra  goes back nearly thirty years and continues to be of significant interest. The irreducible objects in this category are indexed by elements of a free abelian monoid (denoted $\cal P^+$) with generators $\bomega_{i,a}$ where $i$ varies over the index set for the simple roots and $a$ varies over non--zero elements of the field of rational functions in a variable $q$. The category is not semisimple and there are many interesting indecomposable objects in it.  In recent years, there has been new insight in the study of $\cal F$  coming from  connections with cluster algebras through the work of \cite{HL1}, \cite{HL2}, \cite{Nak} and also from KLR algebras through the work of \cite{KKKO, KKKO2}. 

The category $\cal F$ is a monoidal tensor category and  an interesting feature is  that a tensor product of generic  simple objects is simple.  An obviously related notion is that of a prime simple object; this is one which cannot be written in a non-trivial way as a tensor product of objects of $\cal F$. 
An open and very difficult question is the following: classify prime simple objects in $\cal F$ and describe the  factorization of an arbitrary  simple object as a tensor product of primes. 
The answer to this question for $\lie{sl}_2$ was given in \cite{CPqa} where it was also proved that the factorization was unique. In higher rank the  question along with that of uniqueness remains unanswered. However, in \cite{H5} and\cite{H6} an important result was established which greatly simplifies the problem by reducing it to  following: give a necessary and sufficient condition for the tensor product of a  pair of prime simple objects to be simple.

In this paper we focus on this question  for 
 certain  subcategories  of  $\cal F$ associated with  quantum affine $\lie{sl}_{n+1}$.
These subcategories  were introduced by  David Hernandez and Bernard Leclerc (\cite{HL1}, \cite{HL2}) and the definition has its roots in the theory of cluster algebras. The remarkable insight was that prime representations were analogous to cluster variables and the irreducibility of a tensor product of prime objects was analogous to the idea of two  elements belonging to the same cluster. The role of the quiver in the theory of cluster algebras is played by the height function; 
a height function (of type $A_n$) is a function  $\xi: [1,n]\to \bz$ satisfying the condition $|\xi(i)-\xi(i+1)|=1$ for $1\le i\le n-1$. Define  $\cal P^+_\xi$ to  be the submonoid of $\cal P^+$ generated by elements $\bomega_{i,q^{\xi(i)\pm 1 }}$ and 
let   $\cal F_\xi$ be  the full subcategory of $\cal F$ consisting of objects whose Jordan--Holder constituents  are indexed by elements of $\cal P^+_\xi$. It was proved in \cite{HL2} that $\cal F_\xi$ is a monoidal tensor category and  we let $\cal K_0(\cal F_\xi)$ be the Grothendieck ring of $\cal F_\xi$. In the case when $\xi$ is the bipartite height function, i.e,  $\xi(i-1)=\xi(i+1)$ for $2\le i\le n-2$ or the monotonic function $\xi(i)=i$ they showed that $\cal K_0(\cal F_\xi)$ is isomorphic to a cluster algebra with coefficients of type $A$. 

In this paper we prove the result for all height functions of type $A$  by  representation theoretic methods. We define a subset $\bp\bor_\xi$ of $\cal P^+_\xi$ such that the  corresponding irreducible representations (which we call  HL-modules)  are  prime. 
Working  entirely in  $\cal F_\xi$ we 
  show that  the  HL-modules are  precisely all the prime objects in this category.  To do this,  we  establish  necessary and sufficient conditions for a tensor product of HL-modules to be irreducible. In the case when the tensor product is reducible we describe the Jordan-Holder constituents and their factorization as a tensor product of HL-modules.
  
  The connection with cluster algebras is then made as follows. We define a   the   quiver $Q_\xi$ associated with $\xi$; since we are working in the general case the quiver we use is a mutation of the quivers  in \cite{HL1} and  \cite{HL2}. This mutation allows us to map a non--frozen variable in the  initial seed of the cluster algebra to  the class of the irreducible module corresponding to either  $\bomega_{i,\xi(i)+ 1}$ or $\bomega_{i,\xi(i)-1}$. The first mutation at any element of the initial seed is easily described; however is not necessarily of the form $\bomega_{i,\xi(i)\pm 1}$. Our tensor product formulae now allow us to prove the existence of an algebra 
  isomorphism between the cluster algebra with $n$ frozen variables and $\cal K_0(\cal F_\xi)$. The isomorphism maps a cluster variable to an HL-module and we identify this module explicitly.    We also  show that the isomorphism maps cluster monomials to simple tensor products of HL-modules.
  As a consequence of this result we give an alternate proof for the product of a pair of cluster variables to be a cluster monomial; equivalently we give an alternate proof of the criterion for  a pair of roots to be compatible. 
In Proposition \ref{closedform} we give a closed formula for a cluster variable in terms of the original seed. In terms of representation theory this can be interpreted as giving  a $q$-character formula for the prime representations in $\cal F_\xi$. It is useful to remark here that other explicit formulae for cluster variables can be found in the literature see for instance, \cite{BP}, \cite{CC}, \cite{Dup9}, \cite{FK10}. Not all these papers deal with  frozen variables and even those that do impose conditions on the frozen variables which are not satisfied by the quivers considered in this paper. The role of the frozen variable in the  connection with representation theory is important and motivates our formulae.

\medskip 
The paper is organized as follows. In Section \ref{main} we recall the definition of the height function $\xi$ and introduce  the associated quiver $Q_\xi$.  We then  
state and prove our main result modulo the key Propositions \ref{clusterind}, \ref{clusterrep} and \ref{clustermon}.  In Section \ref{proofofclusterind}  we prove Proposition \ref{clusterind} which gives a recursive formula for a cluster variable. This is done by a simple analysis of the quiver obtained by mutating at successive nodes. The answer we obtain is in a form which is well adapted to the representation theory of quantum affine algebras and can be viewed as an analog of Pieri's rule in classical representation theory.
We then solve the recursion to  give a closed formula for the cluster variable in terms of the initial cluster which includes  the frozen variables. In Sections \ref{irrtp}, \ref{reducible} and \ref{starti} we provide sufficient and necessary conditions,  for the tensor product of two HL-modules to be irreducible. We also analyze the Jordan-Holder series of a reducible tensor product of HL-modules.  The proof of  Propositions  \ref{clusterrep} and \ref{clustermon} can be found in  Section \ref{reducible}. 

\medskip

\noindent {\bf Acknowledgements.} {\em MB is grateful to the Department of Mathematics, UCR, for
their hospitality during a visit when part of this research was carried out. He also thanks  David Hernandez for supporting his visit to Paris 7 and many helpful discussions. VC thanks David Hernandez,  Bernard Leclerc and Salvatore Stella for  helpful conversations}.

\section{The main results} \label{main}
Throughout the paper  we denote by $\bc$, $\bz$, $\bz_+$ and  $\bn$  the set of complex numbers,  integers, non--negative and positive  integers respectively. For $i,j\in \bz_+$ with $i\le j$ we let $[i,j]=\{i,i+1,\cdots, j\}$.
Given a commutative  ring $A$ we denote by  $A[q]$ (resp. $A(q)$)   the ring (resp. quotient field) of polynomials  in an indeterminate $q$ with coefficients in $A$.
\subsection{ The cluster algebra $\cal A(\bx, Q_\xi)$}\hfill

Let $\xi: [1,n]\to \bz$ be a height function; namely a function which  satisfies the conditions $$ \ |\xi(i)-\xi(i-1)|= 1,\ \ 2\le i\le n.$$ 
It will be convenient  to extend $\xi$ to $[0,n+1]$ by setting $\xi(0)=\xi(2)$ and $\xi(n-1)=\xi(n+1)$.
\begin{rem} Although  trivial, it is  useful to note that $\{\xi(i+ 1),\xi(i-1)\}\subset\{\xi(i)+ 1, \xi(i)-1
\}$ and that the inclusion can be strict.
\end{rem}
\medskip

\noindent For $i\in [1,n-2]$,  let $i_\diamond\in[i,n]$  be minimal such that $\xi(i_\diamond)=\xi(i_\diamond+2)$ and set $(n-1)_\diamond=(n-1)$ and  $n_\diamond=n$.  Let 
$Q_\xi$ be a quiver  with $2n$ vertices  labeled $\{1,\cdots,n, 1',\cdots, n'\}$ and  with the set of edges  given as  follows: \begin{itemize}
\item there are no edges between the primed vertices; in other words the vertices $\{1',\cdots, n'\}$ are frozen,
\item 
if  $1\le j\le n-1$ and  $\xi(j)=\xi(j+1)+1$, the edges at $j$ are: $$\xymatrix{
&  & (j-1) & j\ar[l] \ar[rrd]_{1-\delta_{j,j_\diamond}}\ar@/^1pc/[rr]^{\delta_{j,j_\diamond}} & & (j+1)\ar@/^1pc/[ll]_{1-\delta_{j,j_\diamond}}\\
&  & &j'\ar[u] & & (j+1)' &
}\\$$
and the reverse orientations if $\xi(j)=\xi(j+1)-1$, where $\delta_{j,j_\diamond}$ is the Kronecker delta function and  we adopt the convention that a labeled edge exists iff the label is one,\\
\item at the vertex $n$ we have  edges $(n-1)\to n\to n'$\  if $\xi(n-1)=\xi(n)+1$ and the reverse orientation otherwise.
\end{itemize}
 Clearly  $j$ is a sink or source of $Q_\xi$ (where we ignore the frozen vertices)  iff $j=1$ or $j=j_\diamond$.  Given $2\le j\le n$ let $2_\bullet=1$ and for $j>2$ let $j_\bullet$ be the maximal sink or source of $Q_\xi$ satisfying $j_\bullet<j$.

Fix a set   $\bx=\{x_1,\cdots, x_n, f_1,\cdots, f_n\}$ of algebraically independent variables and let 
$\cal A(\bx, Q_\xi)$ 
be the cluster algebra (with coefficients)  with initial seed $(\bx, Q_\xi)$. 
The definition of a cluster algebra is recalled briefly in Section \ref{cluster}; for the rest of this section  we shall freely use the language of cluster algebras. Since the principal unfrozen part of $Q_\xi$ is a quiver of type $A_n$,  the set of non-frozen cluster variables in  $\cal A(\bx, Q_\xi)$ are indexed by the set $\Phi_{\ge -1}$
of almost positive roots of a root system of type $A_n$. In other words if we  let $\{\alpha_i:1\le i\le n\}$, be  a set of simple roots for $A_n$ and  set $\alpha_{i,j}=\alpha_i+\cdots+\alpha_j$, $1\le i\le j\le n$, 
then $$\Phi_{\ge -1}=\{-\alpha_i,\  \alpha_{i,j}: 1\le i\le j\le n\},$$ and the cluster variables are denoted $$\{x_i:= x[-\alpha_i], \ x[\alpha_{i,j}],\ f_i:\ 1\le i\le j\le n\}.$$

\subsection{The category $\cal F_\xi$}\hfill

Let $\widehat\bu_q$ be the quantum loop algebra over $\bc(q)$  associated to $\lie{sl}_{n+1}$ and let $\cal F$ be the monoidal tensor category whose objects are  finite--dimensional representations of $\widehat\bu_q$.
Given a height function $\xi:[1,n]\to\bz$ we take
 $\cal P^+_\xi$  to  be the free abelian  monoid with generators $\{\bomega_{i,\xi(i)\pm 1}: i\in[1,n]\}.$ 
It is known that   $\cal P^+_\xi$ is the index set for a  (sub)-family of isomorphism classes of irreducible objects  of $\cal F$. We define $\cal F_\xi$ to be the full subcategory of $\cal F$ consisting of objects all of whose  Jordan--Holder constituents are indexed by elements of  $\cal P^+_\xi$. It was proved in \cite{HL2} that $\cal F_\xi$ is a monodial category and we let $\cal K_0(\cal F_\xi)$ be the corresponding Grothendieck ring. For $\bomega\in\cal P^+_\xi$ let  $[\bomega]\in\cal K_0(\cal F_\xi)$ be  the isomorphism class of the corresponding object in $\cal F_\xi$.
\begin{rem} It is important to keep in mind that the assignment $\bomega\to[\bomega]$ is not a morphism of monoids $\cal P^+_\xi\to\cal K_0(\cal F_\xi)$, i.e., $[\bomega][\bomega']$ is not always equal to $[\bomega\bomega']$. One of the goals of this paper is to determine a necessary and sufficient condition for equality to hold.
\end{rem}

\noindent
For $i\in[1,n]$ set $$\bof_i=\bomega_{i,\xi(i)+1}\bomega_{i,\xi(i)-1}.$$ 
If $1\le i<j\le n$, 
let   $i_2<\cdots<i_{k-1}$  be an ordered enumeration of the subset $\{p: i<p<j, \  \xi(p-1)=\xi(p+1)\}$,  $i_1=i$, $i_k=j$ and 
define an element $\bomega(i,j)\in\cal P^+_\xi$ by: \begin{gather*}
\bomega_\xi(i,j)=\bomega_{i_1,a_1}\cdots\bomega_{i_k,a_k},\end{gather*} where $a_1=\xi(i)\pm 1$ if $\xi(i+1)= \xi(i)\mp 1$ and $a_m=\xi(i_m)\pm 1\ \ {\rm{if}}\ \ \xi(i_{m})=\xi(i_m-1)\pm 1$, for $m\ge 2$.
Set $$\bp\bor_\xi=\{  \bomega_{i,\xi(i)\pm 1},\ \bomega(i,j) : 1\le i\leq j\le n, \ i\neq j \}.$$
Clearly the set $\bp\bor_\xi$ has the same cardinality as the set of unfrozen cluster variables in $\cal A(\bx, Q_\xi)$.
\medskip

\noindent Recall that an object of $\cal F$ is said to be prime if it cannot be written in a nontrivial way as a tensor product of objects of $\cal F$. The following is a special case of the main result  of  \cite{CMY}.
\begin{lem}\label{prime} The irreducible object of $\cal F_\xi$ associated to an element  $\bomega\in\bp\bor_\xi\cup\{\bof_i: 1\le i\le n\}$ is prime. \hfill\qedsymbol
\end{lem}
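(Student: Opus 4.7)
The lemma asserts the primality of certain explicit simple modules, and is stated as a consequence of the main theorem of [CMY]. The plan, therefore, is not to develop fresh machinery but to verify that each element of the two families $\bp\bor_\xi$ and $\{\bof_i : 1\le i \le n\}$ meets the combinatorial primality criterion established there.

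First I would dispose of the easy cases. For a fundamental monomial $\bomega_{i,\xi(i)\pm 1}$, the associated Drinfeld polynomial has a single simple root in a single variable, so no nontrivial factorization as a tensor product of objects of $\cal F$ is possible; this is already covered by [CMY]. For $\bof_i = \bomega_{i,\xi(i)+1}\bomega_{i,\xi(i)-1}$, the two spectral parameters at node $i$ differ precisely by $q^{\pm 2}$, which identifies $L(\bof_i)$ with a Kirillov--Reshetikhin module (of the $(i,2)$-shape). Primality of KR modules is the prototypical instance of the CMY theorem.

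The main step is the case of $\bomega(i,j) = \bomega_{i_1,a_1}\cdots \bomega_{i_k,a_k}$ for $i<j$. Here I would unwind the definition: by construction $i = i_1 < i_2 < \cdots < i_k = j$ are exactly the indices in $[i,j]$ at which the height function changes direction (together with the endpoints), and the spectral parameters $a_m$ are then forced by the local behaviour of $\xi$. This data is precisely that of a \emph{snake} in the sense of Mukhin--Young, realised as a minimal path in the $(i,a)$-grid from $(i,\xi(i)\pm 1)$ to $(j,\xi(j)\pm 1)$. I would then check that consecutive pairs $(i_m,a_m),(i_{m+1},a_{m+1})$ lie in minimal position so that the snake is \emph{prime} (i.e.\ indecomposable as a concatenation of shorter snakes), which is exactly the CMY hypothesis forcing $L(\bomega(i,j))$ to be prime.

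The principal (and essentially only) obstacle is bookkeeping: the conventions of [CMY] are phrased in terms of snakes and a global $(i,a)$-lattice, whereas ours are expressed through the height function $\xi$. I expect the match to be a one-line translation once the boundary choices are handled carefully — in particular the sign in $a_1 = \xi(i)\pm 1$ (determined by whether $\xi(i+1)=\xi(i)\mp 1$) and the analogous choice at the other end — but this is where a careful check is needed. Once this translation is made, the three cases above are each instances of the cited theorem, and the lemma follows.
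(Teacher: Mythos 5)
The paper itself offers no proof beyond the sentence immediately preceding the lemma, which states that it is a special case of the main result of [CMY]; your proposal is an attempt to spell out that citation, so the overall strategy agrees with the paper. The verification for $\bomega_{i,\xi(i)\pm 1}$ (fundamental module) and $\bof_i$ (level-two Kirillov--Reshetikhin module) is sound and in line with what [CMY] covers.

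There is, however, an inaccuracy in your treatment of $\bomega(i,j)$. You identify it as ``a snake in the sense of Mukhin--Young.'' But Mukhin--Young snakes require the second coordinate to be monotone: $(j,b)$ is in snake position relative to $(i,a)$ only when $b-a\ge |j-i|+2$, so the spectral parameters increase along the snake. In $\bomega(i,j)=\bomega_{i_1,a_1}\cdots\bomega_{i_k,a_k}$, the indices $i_2,\ldots,i_{k-1}$ are exactly the turning points of $\xi$, and at each turning point the sign of $a_{m+1}-a_m$ flips. For instance with $n=4$, $\xi=(0,1,0,1)$ one finds $\bomega(1,4)=\bomega_{1,-1}\bomega_{2,2}\bomega_{3,-1}\bomega_{4,2}$, whose spectral parameters zig-zag and hence do not form a Mukhin--Young snake. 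What is true, and what [CMY] actually requires, is only that consecutive pairs satisfy $|a_{m+1}-a_m|=(i_{m+1}-i_m)+2$ with $i_1<\cdots<i_k$ (no sign constraint); you verify exactly this, so the invocation of [CMY] goes through and the conclusion is correct. But the bridge via Mukhin--Young snakes should be dropped: it is not what [CMY] uses, and for a general height function the identification is simply false. Rephrasing the middle step as ``consecutive pairs are in minimal position, which is the hypothesis of [CMY]'' -- without the snake detour -- would make the argument accurate.
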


\subsection{Main Theorem}
Recall  that by definition $n=n_\diamond$ and $(n-1)=(n-1)_\diamond$ which in particular implies that $n_\bullet=n-1$. For $k\ge 2$, set   \begin{equation}\label{bark} \bar{ k}=(k+1)(1-\delta_{k,k_\diamond})+(k_\bullet+1)\delta_{k,k_\diamond}. \end{equation}
\begin{thm}\label{main}Let $\xi: [1,n]\to\bz$ be a height function. The assignment
 $$\iota(x_i)= [\bomega_{i, \xi(i+1)}],\ \ \ \iota(f_i)=[\bold f_i],$$ extends to an  isomorphism of
  rings
 $\iota: \cal A(\bx, Q_\xi)
 \to \cal K_0(\cal F_\xi)$ such that 
for $1\le i\le k\le n$, \begin{eqnarray}\label{iota1}
&\iota(x[\alpha_{i,i_\diamond}])&= [\bomega_{i,\xi(i+1)\pm 2}],\ \ \ \  \xi(i)=\xi(i+1)\pm 1\\ \notag\\ \label{iota2} &\iota(x[\alpha_{i,k}])&=[\bomega(i,\bar{k})], \ \ k\ne i_\diamond, \ \\ \notag \\ &
\iota(f_px[\alpha])&=[\bof_p\bomega]\ \  p\in[1,n],\ \ \alpha\in  \Phi_{\ge -1},\ \ \ [\bomega]=\iota(x[\alpha]).
\end{eqnarray}
In particular $\iota$ maps  cluster variable to a prime object of $\cal F_\xi$. Moreover,
if $\beta_1,\beta_2\in \Phi_{\ge -1}$ are such that $x[\beta_1]x[\beta_2]$ is  a cluster monomial then  $$[\bomega_1\bomega_2]=[\bomega_1][\bomega_2],\ \ [\bomega_s]=\iota(x[\beta_s]),\ \ s=1,2 .$$ 
\end{thm}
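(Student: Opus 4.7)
The plan is to first extend $\iota$ to a ring homomorphism from the Laurent polynomial ring $\bz[x_1^{\pm 1},\ldots,x_n^{\pm 1},f_1^{\pm 1},\ldots,f_n^{\pm 1}]$ into a localization of $\cal K_0(\cal F_\xi)$. Since $[\bof_j]$ is the class of a prime simple object by Lemma \ref{prime}, it is a non-zero-divisor, so inverting these classes is harmless. The assignments $x_i\mapsto[\bomega_{i,\xi(i+1)}]$ and $f_j\mapsto[\bof_j]$ therefore extend uniquely to a ring homomorphism from this Laurent ring, and restricting gives a candidate map on $\cal A(\bx,Q_\xi)\subset\bz[\bx^{\pm 1}]$. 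The proof then splits into (i) verifying the explicit formulas (\ref{iota1})--(\ref{iota2}) and that the image lies in $\cal K_0(\cal F_\xi)$, (ii) the cluster-monomial statement, and (iii) bijectivity.

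For (i), I would induct on the mutation-distance of $x[\alpha_{i,k}]$ from the initial seed. Proposition \ref{clusterind} provides a recursive expression for $x[\alpha_{i,k}]$ in terms of previously-obtained cluster variables together with the $f_j$. Proposition \ref{clusterrep} furnishes the parallel identity in $\cal K_0(\cal F_\xi)$: a short exact sequence whose character relation expresses $[\bomega(i,\bar k)]$ (or $[\bomega_{i,\xi(i+1)\pm 2}]$ when $k=i_\diamond$) as the same combination of classes of HL-modules and of $[\bof_j]$. Matching these two recursions term-by-term, with indices $\bar k$, $i_\diamond$, and $i_\bullet$ dictated by the quiver structure, establishes (\ref{iota1})--(\ref{iota2}) by induction on $k-i$. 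The third identity $\iota(f_p x[\alpha])=[\bof_p\bomega]$ is then immediate from multiplicativity of $\iota$.

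For (ii), Proposition \ref{clustermon} translates the combinatorial compatibility of two almost positive roots $\beta_1,\beta_2$ (the criterion for $x[\beta_1]x[\beta_2]$ to be a cluster monomial) into simplicity of the tensor product of the corresponding HL-modules, yielding $[\bomega_1][\bomega_2]=[\bomega_1\bomega_2]$. The reducible (non-compatible) case is controlled by the analysis in Section \ref{reducible}, which gives the precise Jordan--Holder filtration whose alternating sum is used in the proof of that proposition.

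For (iii), surjectivity follows because Lemma \ref{prime} together with the tensor product analysis of Sections \ref{irrtp}--\ref{starti} shows that every prime simple in $\cal F_\xi$ is either an HL-module or a frozen $\bof_j$; hence $\cal K_0(\cal F_\xi)$ is generated as a ring by the $[\bomega(i,j)]$, $[\bomega_{i,\xi(i)\pm 1}]$, and $[\bof_j]$, all of which lie in $\iota(\cal A(\bx,Q_\xi))$. Injectivity holds because $\cal A(\bx,Q_\xi)$ is an integral domain sitting inside $\bz[\bx^{\pm 1}]$ and the $2n$ classes $[\bomega_{i,\xi(i+1)}], [\bof_j]$ are algebraically independent in $\cal K_0(\cal F_\xi)$, which can be verified via $q$-characters. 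The principal obstacle will be the term-by-term matching of Propositions \ref{clusterind} and \ref{clusterrep}: every mutation relation in $\cal A(\bx,Q_\xi)$ must be transcribed into an equality of classes of tensor products in $\cal K_0(\cal F_\xi)$, with the correct frozen factors dictated by edges to primed vertices in $Q_\xi$.
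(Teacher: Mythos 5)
Your proposal identifies the same three-proposition engine (Propositions \ref{clusterind}, \ref{clusterrep}, \ref{clustermon}) and the same term-by-term matching of recursions, so the core is correct and aligned with the paper. The scaffolding differs in three places, all of which are valid but less economical than the paper's. First, you define $\iota$ by extending to the Laurent ring $\bz[\bx^{\pm 1}]$ and localizing $\cal K_0(\cal F_\xi)$; the paper instead uses the \cite{BFZ} presentation of $\cal A(\bx,Q_\xi)$ via standard monomials, which exhibits the cluster algebra as a quotient of a polynomial ring by a single type of relation. This means only Proposition \ref{clusterrep}(i) has to be checked to show $\iota$ extends, and the map lands in $\cal K_0(\cal F_\xi)$ directly, with no localization to pass through (note also that your route implicitly requires the classes $[\bomega_{i,\xi(i+1)}]$ — not just $[\bof_j]$ — to be inverted, and hence to be non-zero-divisors, a point the paper handles by observing $\cal K_0(\cal F_\xi)$ is a domain). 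Second, for surjectivity you invoke a classification of all prime simples; the paper's argument is lighter: it uses the standard $q$-character fact \eqref{weylko} and induction on $\wt\bomega$ to see that $\cal K_0(\cal F_\xi)$ is generated by $\{[\bomega_{i,\xi(i)\pm 1}]\}$, which are in the image by \eqref{iota1}. Third, your injectivity argument — algebraic independence of the $2n$ generating classes "verified via $q$-characters" — is the right idea but is left entirely to the reader; the paper's version is concrete: it introduces the monoid map $\wt_\ell$ on standard monomials, proves in Lemma \ref{distinct} that distinct standard-monomial-times-frozen products have distinct $\wt_\ell$, and then uses the $P^+$-dominance-order triangularity from \eqref{weylko} to peel off a maximal weight. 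You would need essentially these same two ingredients (freeness of $\cal P^+_\xi$ plus dominance-order triangularity) to make your sketch rigorous, so the gap here is one of detail rather than of idea. Finally, in the cluster-monomial step you describe Proposition \ref{clustermon} as translating compatibility into simplicity; the paper's argument is actually the contrapositive: assuming $[\bomega_1][\bomega_2]\ne[\bomega_1\bomega_2]$, Proposition \ref{clustermon} and linear independence of cluster monomials (via the just-proved isomorphism) force $x[\beta_1]x[\beta_2]$ not to be a cluster monomial.
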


\begin{cor} The homomorphism $\iota$ sends a cluster monomial to the equivalence class of an irreducible object of $\cal F_\xi$. In particular, $\cal F_\xi$ is a monoidal categorification of $\cal A(\bx,\xi)$.
\end{cor}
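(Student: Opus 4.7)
The plan is to deduce both statements of the corollary directly from \thmref{main} together with the pairwise-irreducibility reduction for finite-dimensional modules over quantum affine algebras recalled in the Introduction (and proved by Hernandez in \cite{H5,H6}).

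For the first statement, I would fix a cluster of $\cal A(\bx,Q_\xi)$ and write an arbitrary cluster monomial supported on its variables together with the frozen variables $f_1,\ldots,f_n$ as $m=\prod_{s=1}^{r}z_s^{a_s}$ with distinct $z_s$. By \thmref{main}, each $\iota(z_s)$ is the class $[\bomega_s]$ of a prime simple module $L(\bomega_s)$ of $\cal F_\xi$; moreover, the last assertion of \thmref{main}, applied to every pair $z_s,z_t$ (including the case $s=t$, since $z_s^2$ is itself a cluster monomial), gives
\[
\iota(z_sz_t)\;=\;[\bomega_s][\bomega_t]\;=\;[\bomega_s\bomega_t],
\]
which is equivalent to the irreducibility of $L(\bomega_s)\otimes L(\bomega_t)$; pairs involving a frozen variable are covered analogously by the formula $\iota(f_px[\alpha])=[\bof_p\bomega]$ in \thmref{main}. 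In particular, each $L(\bomega_s)$ is real.

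I would then invoke the theorem of \cite{H5,H6}: a tensor product of finitely many real simple modules over $\widehat\bu_q$ is simple as soon as every pairwise tensor product of its factors is simple. Applied to the list consisting of $a_s$ copies of $L(\bomega_s)$ for $s=1,\ldots,r$, this yields $\bigotimes_{s} L(\bomega_s)^{\otimes a_s}\cong L\bigl(\prod_s\bomega_s^{a_s}\bigr)$, and hence
\[
\iota(m)\;=\;\prod_s[\bomega_s]^{a_s}\;=\;\Bigl[\prod_s\bomega_s^{a_s}\Bigr]
\]
is the class of a simple object of $\cal F_\xi$. This proves the first assertion.

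For the second, a monoidal categorification of $\cal A(\bx,Q_\xi)$ in the sense of \cite{HL1} is a monoidal subcategory of $\cal F$ equipped with a ring isomorphism from the cluster algebra to its Grothendieck ring sending cluster variables to classes of prime simples and cluster monomials to classes of simples. \thmref{main} supplies the isomorphism $\iota$ and the cluster-variable condition, while the preceding paragraphs supply the cluster-monomial condition, so $\cal F_\xi$ is a monoidal categorification of $\cal A(\bx,Q_\xi)$. The only delicate step in this plan is the clean invocation of the pairwise-to-multiple reduction of \cite{H5,H6} in a form allowing repeated real factors; once that is recorded, everything else is a direct assembly of facts already established.
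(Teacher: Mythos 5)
Your argument is essentially the paper's own proof: both write the cluster monomial as a product of (repeated) cluster variables, apply the last assertion of Theorem~\ref{main} to every pair to get pairwise irreducibility, and then invoke the pairwise-to-multiple reduction of Hernandez (recorded in the paper as Theorem~\ref{pairs}, from \cite{H5,H6}) to conclude that the full tensor product is irreducible. The only cosmetic differences are that you group repeated factors into powers and explicitly note the reality/$s=t$ case and the frozen-variable pairs, whereas the paper handles these by allowing $\beta_j=\beta_p$ with $j\neq p$ in its list and by the formula $\iota(f_px[\alpha])=[\bof_p\bomega]$; these are the same argument.
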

\noindent {\em{Proof of Corollary.}} Let $x[\beta_1]\cdots x[\beta_r]$ be  a cluster monomial for some $\beta_1,\cdots,\beta_r\in \Phi_{\ge -1}$ and set $[\bomega_i]=\iota(x[\beta_i])$ for $1\le i\le r$. Then the  pairs $x[\beta_j]x[\beta_p]$, $1\le j\ne  p\le r$ are cluster monomials and hence using the Theorem \ref{main} we have $\iota(x[\beta_j] x[\beta_p])=[\bomega_j\bomega_p]$ for $1\le j\ne p\le r$.
It follows from the  main result of  \cite{H} (see section 3 of this paper for the statement)  that $\iota(x[\beta_1]\cdots x[\beta_r])=[\bomega_1\cdots\bomega_r]$  and the corollary is established.\hfill\qedsymbol

\begin{rem}  Suppose that $\xi$ satisfies  $\xi(i-1)=\xi(i+1)$ for all $1\le i\le n$ or that   $\xi(j)=\xi(i)+(j-i)$ for all $1\le i\le j\le n$. In these two cases the existence of $\iota$ was established in \cite{HL1},\cite{HL2} by very different methods. As was noted in \cite{HL2} the categories $\cal F_\xi$ are not necessarily equivalent for different height functions.  \end{rem}
\medskip

\subsection{} In  Theorem \ref{irredcrit} of this paper we give   conditions for the equality $[\bpi\bpi']=[\bpi][\bpi']$ when $\bpi,\bpi'\in \bp\bor_\xi$ to hold in $\cal K_0(\cal F_\xi)$. The translation to  the language of cluster algebra  gives the conditions for  describing when two roots are compatible. Thus our theorem gives a proof of the following assertion (compare with the description in Section 10.2.3 of \cite{HL2} where a similar description in the case of the bipartite height function).

\noindent Assume that $i\le j$,  $k\le \ell$ and $i\le k$.
If $j\ne i_\diamond$ the roots  $\alpha_{i,j}$, $\alpha_{k,\ell}$ are compatible iff:

\noindent $\bullet $ $k=i\ \ {\rm or }\   k>j+1$,\\ \\
$\bullet$ $j=j_\diamond$ and  $j_\bullet+1\leq  k\le j,$\\ \\ 
$\bullet$ $\ell\ne k_\diamond$ and either $\bar j=\bar\ell$ or \begin{gather*}i<k<\bar j< \bar\ell,  \ \ {\rm {and}}\ \#\{k\le m<\bar j-1: m=m_\diamond\}\in 2\bz_++1,\ {\rm{or}}\\
 \ i<k< \bar\ell<\bar j,  \ \ {\rm and }\ \#\{k\le m<\bar\ell-1: m=m_\diamond\}\in 2\bz_+.\end{gather*}

\noindent The roots $\alpha_{i,i_\diamond}$ and $\alpha_{k,\ell}$ with $i\le k$ are compatible iff :
\begin{gather*}  \ \ k_\bullet\ne k-1,\ \ {\rm{or}}\ \  (k-1)_\bullet\ge i
 \ {\rm{or}}\ \ 
\ell\ne k_\diamond \ \ {\rm{and}}\ \ i=k.
\end{gather*}

\noindent The roots $-\alpha_{i}$ and $\alpha_{k,\ell}$  are in the same cluster iff either $k>i$ or $\ell<i$.

\medskip

In Theorem \ref{t:reduc} we write down the Jordan-Holder series for  a reducible tensor product of objects.  This amounts to writing down all the non-trivial exchange relations for cluster variables including the frozen variables and is not hard to do using the analysis above.

\subsection{} The proof of the theorem involves three principal steps. For $1\le j\le n$, set $$ d_j=\delta_{j, j_\diamond}=\delta_{\xi(j),\xi(j+2)}.$$
The first step is the following proposition which gives a recursive formula for the cluster variables.
We adopt the convention that $\alpha_{i,m}=\alpha_m,\   m\le i$.

\begin{prop} \label{clusterind}  
For $1\le i< j\le n$ we have 
$$x_ix[\alpha_i]=f_ix_{i+1}^{1-d_i}+f_{i+1}^{1-d_i}x_{i-1}x_{i+1}^{d_i},$$
\begin{eqnarray*}&x_jx[\alpha_{i,j}]& = f_j^{d_{j-1}}x[\alpha_{i,j-1}]x_{j+1}^{1-d_j}+\\&&+ f_{j+1}^{1-d_j}x_{j+1}^{d_j} \left((\delta_{i_\bullet,j_\bullet}+\delta_{i,j_\bullet})f_i^{\delta_{i,j_\bullet}}x_{i-1}^{1-\delta_{i,j_\bullet}}+(1-\delta_{i_\bullet,j_\bullet}-\delta_{i,j_\bullet}) f_{j_\bullet}^{d_{j_\bullet-1}} x[\alpha_{i,j_\bullet-1}]\right).
\end{eqnarray*}
\end{prop}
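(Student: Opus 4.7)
The plan is to prove the recursion by induction on $j-i$, obtaining $x[\alpha_{i,j}]$ as the cluster variable that replaces $x_j$ after one mutates the seed $(\bx, Q_\xi)$ successively at the unfrozen vertices $i, i+1, \ldots, j$. At each stage the exchange relation at the vertex being mutated is read off directly from the current quiver, and the recursion then becomes simply the exchange relation at the last step in this sequence.

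For the base case $j = i$, I would mutate $Q_\xi$ at $i$. The edges at $i$ are listed explicitly in the definition of $Q_\xi$: vertex $i$ has unfrozen neighbors $i-1$ and $i+1$, is incident to the frozen vertex $i'$, and is incident to $(i+1)'$ with multiplicity $1-d_i$; when $d_i = 1$ one of the arrows to $(i+1)'$ is replaced by an arrow to the unfrozen $i+1$. Listing incoming versus outgoing neighbors in each of the two sign cases $\xi(i) = \xi(i+1)\pm 1$ and taking products gives the stated formula for $x_i x[\alpha_i]$; reversing all arrows swaps the two monomials on the right-hand side, so the formula is independent of the sign.

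For the inductive step, I assume $x[\alpha_{i,j-1}]$ sits at vertex $j-1$ of the quiver obtained from $Q_\xi$ by mutations at $i, i+1, \ldots, j-1$, then mutate at $j$. The core claim to prove about the mutated quiver is: after these mutations, the unfrozen neighbors of $j$ are only $j-1$ and $j+1$, and the frozen/long-range edges at $j$ are precisely an arrow involving $j'$ with multiplicity $d_{j-1}$, an arrow involving $(j+1)'$ with multiplicity $1-d_j$, and a single ``back-edge'' from $j$ to the most recent sink/source $j_\bullet$, with all the intermediate frozen vertices $m'$ for $j_\bullet < m < j$ no longer incident to $j$. Reading off the exchange relation at $j$ from this description yields the two summands of the recursion, and the parenthetical on the right-hand side splits into three regimes corresponding to where the back-edge terminates: when $i = j_\bullet$ it collapses to the frozen vertex $i'$, producing $f_i$; when $i_\bullet = j_\bullet$ but $i \ne j_\bullet$ it collapses to $x_{i-1}$; otherwise it contributes the full $f_{j_\bullet}^{d_{j_\bullet-1}} x[\alpha_{i,j_\bullet-1}]$, where the $f_{j_\bullet}^{d_{j_\bullet-1}}$ factor records the one surviving frozen incidence at $j_\bullet$.

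The main obstacle is proving the quiver claim itself, namely that as one performs the mutations at $i+1, \ldots, j-1$ in order, the edges between $j$ and each interior frozen vertex $m'$ with $j_\bullet < m < j$ are created and then cancelled, so that only the contribution from the most recent sink/source $j_\bullet$ survives. This is a finite inductive combinatorial analysis of how arrows at $j$ evolve under each intermediate mutation, using the explicit edge description of $Q_\xi$ at sinks, sources, and non-sink/source vertices, together with case-work on $d_{j-1}, d_j \in \{0,1\}$ and on whether the interior vertex being mutated is a sink/source. Once this claim is established, the asserted recursion is an immediate rewriting of the exchange relation at $j$.
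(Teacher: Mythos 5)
Your proposal is correct and follows essentially the same route as the paper: the paper defines the quivers $Q_\xi[i,j]$ obtained by mutating successively at $i,i+1,\dots,j$, proves by induction on $j-i$ a lemma pinning down the edges at the vertex $j$ in $Q_\xi[i,j-1]$ (including the surviving ``back-edges'' to the unfrozen vertex $\max\{i-1,j_\bullet-1\}$ and the frozen vertex $\max\{i,j_\bullet\}'$, which produce exactly your three regimes), and then reads off the exchange relation at $j$. The combinatorial quiver claim you isolate as the main obstacle is precisely the content of that lemma, proved there by the case analysis on $d_j\in\{0,1\}$ that you describe.
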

The 
proof of this proposition is  in Section \ref{proofofclusterind} where we also give a closed formula for $x[\alpha_{i,j}]$ as a Laurent polynomial in the variables $\{x_1,\cdots,x_n,f_1,\cdots, f_n\}$.

\subsection{} The second step in the proof of the theorem is the following.  We adopt the convention that we take $\bomega_{i,\xi(i+1)+2}$ if $\xi(i)=\xi(i+1)+1$ and we take $\bomega_{i,\xi(i+1)-2}$ if $\xi(i)=\xi(i+1)-1$.
\begin{prop}\label{clusterrep}
 The following equalities hold in $\cal K_0(\cal F_\xi)$ for $1\le i\le j\le n$.
 \begin{enumerit}
\item[(i)] We have 
$$[\bomega_{i,\xi(i+1)}][\bomega(i,i+1)]^{1-d_i}[\bomega_{i,\xi(i+1)\pm 2}]^{d_i}=   [\bof_i]\ [\bomega_{i+1, \xi(i+2)}]^{1-d_i}  + [\bof_{i+1}]^{1-d_i} [\bomega_{i+1,\xi(i+2)}]^{d_i}\  [\bomega_{i-1,\xi(i)}]\ .
$$
\item[(ii)] If $j_\bullet\le i<j$ then

  \begin{eqnarray*}& [\bomega_{j, \xi(j+1)}][\bomega(i,\bar j)]^{1-\delta_{j,i_\diamond}}[\bomega_{i,\xi(i+1)\pm 2}]^{\delta_{j,i_\diamond}}&= [\bof_j]^{d_{j-1}}\ 
[\bomega(i,j)]^ {1-d_{j-1}}  [\bomega_{i, \xi(i+1)\pm 2}]^{d_{j-1}} [\bomega_{j+1, \xi(j+2)}]^{1-d_j}\\ \\ && +   [\bof_{j+1}]^{1-d_j} [\bof_i]^{\delta_{i,j_\bullet}} [\bomega_{j+1, \xi(j+2)}]^{d_j} [\bomega_{i-1, \xi(i)}]^{1-\delta_{i,j_\bullet}}\end{eqnarray*}
	\item[(iii)] If $i<j_\bullet$ choose $z\in\{\xi(i)+1,\xi(i)-1\}$ so that $\bomega_{i,z}^{-1}\bomega(i,j_\bullet)\in\cal P^+_\xi$ and set $k=(j_\bullet)_\bullet$.  Then,
	\begin{eqnarray*}[\bomega_{j,\xi(j+1)}] [\bomega(i,\bar j)]= [\bof_{j}]^{d_{j-1}}\ [\bomega_{j+1, \xi(j+2)}]^{1-d_j}\ [\bomega(i,\overline{ j-1})]^{1-\delta_{(j-1)_\bullet, i_\bullet}}\  [\bomega_{i,z}]^{\delta_{(j-1)_\bullet,i_\bullet}}\\ \\ + [\bof_{j+1}]^{1-d_j}\ [\bof_{j_\bullet}]^{d_{j_\bullet-1}} [\bomega_{j+1, \xi(j+2)}]^{d_j} \ [\bomega(i,\overline{j_\bullet-1} )]^{1-\delta_{i_\bullet, k_\bullet}d_{j_\bullet-1}}\   [\bomega_{i,z}] ^{\delta_{i_\bullet, k_\bullet}d_{j_\bullet-1}}\ . \end{eqnarray*}
\end{enumerit}
\end{prop}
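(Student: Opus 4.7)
The plan is to derive each of (i), (ii), (iii) by identifying the left-hand side as the class of a specific tensor product of prime HL-modules which turns out to be reducible of length two, and then identifying each of the two Jordan-Holder constituents with an irreducible tensor product of prime HL-modules matching one of the two summands on the right. In the Grothendieck ring $\cal K_0(\cal F_\xi)$, a product $[\bomega][\bomega']$ is by definition the class of the tensor product of the corresponding irreducible modules, and this equals $[\bomega\bomega']$ if and only if that tensor product is irreducible. The proof therefore uses Theorem \ref{irredcrit} in two directions: once to show that the LHS tensor product is reducible, and repeatedly to show that each product on the RHS is irreducible, so that each stated identity in $\cal K_0(\cal F_\xi)$ is the Jordan-Holder identity $[V_1\otimes V_2]=[V']+[V'']$ combined with two further factorizations $[V']=\prod[\bomega'_s]$, $[V'']=\prod[\bomega''_t]$. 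Both $V'$ and $V''$ will be read off from the explicit description of reducible tensor products of HL-modules in Theorem \ref{t:reduc}, and both theorems are established independently in Sections \ref{irrtp}, \ref{reducible} and \ref{starti}.

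For part (i), I would work with the tensor product of the prime modules indexed by $\bomega_{i,\xi(i+1)}$ and $\bomega(i,i+1)$ when $d_i=0$, or by $\bomega_{i,\xi(i+1)}$ and $\bomega_{i,\xi(i+1)\pm 2}$ when $d_i=1$. In either case the two Drinfeld polynomials overlap in exactly the way forbidden by the irreducibility criterion, so by Theorem \ref{t:reduc} the tensor product has length two and its two constituents are the irreducible modules with highest $l$-weights $\bof_i\,\bomega_{i+1,\xi(i+2)}^{1-d_i}$ and $\bof_{i+1}^{1-d_i}\,\bomega_{i+1,\xi(i+2)}^{d_i}\,\bomega_{i-1,\xi(i)}$ respectively. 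Each of these then factors irreducibly into the claimed product of HL-module classes, again by Theorem \ref{irredcrit}.

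For part (ii), with $j_\bullet\le i<j$, I would apply the same method to the tensor product of the modules indexed by $\bomega_{j,\xi(j+1)}$ and $\bomega(i,\bar j)$: the constraint $j_\bullet\le i$ ensures that exactly one pair of Drinfeld roots fails the compatibility condition, producing a length-two composition series whose two constituents correspond to the two ways of resolving the collision, and these match the two summands on the right. Part (iii), with $i<j_\bullet$, proceeds identically except that the presence of the interior sink or source $j_\bullet\in(i,j)$ introduces an additional split; the Kronecker symbols $\delta_{(j-1)_\bullet,i_\bullet}$ and $\delta_{i_\bullet,k_\bullet}\,d_{j_\bullet-1}$ on the right are precisely the data distinguishing whether a given removal lands at a sink/source or in the middle of a monotonic run of $\xi$, and once this is fixed the resulting products are irreducible, again by Theorem \ref{irredcrit}.

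The main obstacle is the bookkeeping. One must multiply out the highest $l$-weights on both sides of each equality and match them $\bomega_{s,a}$ by $\bomega_{s,a}$, while tracking the Kronecker deltas $d_i$, $d_{j-1}$, $d_j$, $\delta_{i,j_\bullet}$, $\delta_{i_\bullet,j_\bullet}$, $\delta_{(j-1)_\bullet,i_\bullet}$ that toggle the exponents according to whether the indices sit at the boundary or in the interior of a monotonic segment of $\xi$. Once the two constituents of each reducible tensor product have been named correctly, and the irreducibility of every factored expression on the right is verified via Theorem \ref{irredcrit}, the three identities follow from a finite case analysis organised by the position of $i$ relative to $j_\bullet$, and by whether $j=j_\diamond$.
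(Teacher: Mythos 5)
Your proposal takes essentially the same route as the paper: the paper likewise identifies the left-hand side of each identity as $[\bpi_1][\bpi_2]$ for an explicit pair of elements of $\bp\bor_\xi$, invokes the exchange relations of Theorem \ref{t:reduc} (together with the base case \eqref{basecase} and \eqref{minmaxkr}) to obtain the two-term decomposition, and uses Theorem \ref{irredcrit} to put each summand in the stated factored form, organized by exactly the case analysis on $d_j$, $d_{j-1}$ and $\delta_{i,j_\bullet}$ that you describe. The argument is correct, granting (as the paper does at this point) that Theorem \ref{irredcrit} and Theorem \ref{t:reduc} are proved independently of this proposition.
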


The proof of this proposition can be found in Section \ref{reducible}. 

 \subsection{} Proposition \ref{clusterind} and Proposition \ref{clusterrep} are enough to establish the existence of $\iota$ and to identify the image of a cluster variable. 
 The third step needed to establish the theorem 
is to show that $\iota$ maps a cluster monomial to the isomorphism class of an irreducible representation. To do this we will need the following result.
\begin{prop}\label{clustermon} Let $\bomega,\bomega'\in\bp\bor_\xi$. Then either $[\bomega][\bomega']=[\bomega\bomega']$ or $[\bomega][\bomega']=[\bomega_1]+[\bomega_2]$ where $[\bomega_1]$ and $[\bomega_2]$ are the images under $\iota$  of cluster monomials.
\end{prop}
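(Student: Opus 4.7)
The plan is to reduce to the explicit classification of (ir)reducible tensor products of HL-modules worked out in Sections \ref{irrtp}--\ref{starti}. By Theorem \ref{irredcrit} we know exactly when $[\bomega\bomega']=[\bomega][\bomega']$; in particular, when this fails, $\bomega$ and $\bomega'$ must arise in one of a short list of ``incompatible'' configurations, and Theorem \ref{t:reduc} exhibits the two Jordan--Holder constituents of the reducible tensor product explicitly, as factored products of elements of $\bp\bor_\xi$. Thus the content of the proposition is: in each such reducible case, the two constituents are images under $\iota$ of cluster monomials, i.e.\ tensor products of HL-modules whose indexing roots are pairwise compatible.

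First I would handle the ``atomic'' exchange relations of Proposition \ref{clusterrep}. Parts (i)--(iii) already display the product $[\bomega][\bomega']$ (for suitable $\bomega,\bomega'\in\bp\bor_\xi$) as a sum of two monomials, each of which is, up to factors of the form $[\bof_p]$, a product of two HL-modules of the form $[\bomega(a,b)]$ or $[\bomega_{i,\xi(i+1)}]$. Using the compatibility list stated after Theorem \ref{main}, I would check directly that the roots appearing in each summand are pairwise compatible. Since a frozen variable $f_p$ is compatible with every other cluster variable, the factors $[\bof_p]^{e}$ cause no trouble. This covers the cases where the exchange relation used to produce $\bomega, \bomega'$ is one of the three families in Proposition \ref{clusterrep}.

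For general $\bomega,\bomega'\in\bp\bor_\xi$ with $[\bomega][\bomega']\ne [\bomega\bomega']$, I would argue as follows. By Theorem \ref{irredcrit}, reducibility of the tensor product forces the underlying roots $\beta,\beta'\in\Phi_{\geq -1}$ (with $[\bomega]=\iota(x[\beta])$ and $[\bomega']=\iota(x[\beta'])$) to be incompatible. Two incompatible almost positive roots in type $A_n$ always admit a mutation sequence from the initial cluster in which $\beta$ and $\beta'$ play the roles of an exchange pair; equivalently, there is a cluster containing $\beta$ together with the four ``Ptolemy neighbours'' of the exchange. Thus the exchange relation for $x[\beta]x[\beta']$ expresses this product as a sum of two cluster monomials, each of length at most two in non-frozen variables (times frozen factors). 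Applying $\iota$ and invoking Proposition \ref{clusterrep} together with the base tensor product formulas of Section \ref{reducible} (which already appear in the proof of Proposition \ref{clusterrep}), I would identify $[\bomega][\bomega']$ with the same two-term expression, thereby matching the Jordan--Holder series computed in Theorem \ref{t:reduc} with $\iota$ applied to the exchange relation.

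The main obstacle I expect is bookkeeping the case analysis: the sink/source structure governed by $j_\diamond$ and $j_\bullet$ forces several subcases depending on the relative positions of the intervals $[i,j]$ and $[k,\ell]$ of the two roots, and the frozen factors $[\bof_p]$ appear with exponents that depend delicately on the functions $d_m$. The conceptual step is trivial once Theorems \ref{irredcrit} and \ref{t:reduc} are in place; the work lies in verifying that each of the two factored summands given by Theorem \ref{t:reduc} consists of mutually compatible roots, and that frozen variables absorbed into the $[\bof_p]$ factors do not obstruct compatibility. Once this verification is done uniformly across the cases enumerated after Theorem \ref{main}, the proposition follows.
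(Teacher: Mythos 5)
The paper's proof is much slicker than your plan and avoids the case analysis entirely. Its first and crucial step is an indirect argument: if $\bomega,\bomega'\in\bp\bor_\xi$ and $[\bomega][\bomega']=[\bomega\bomega']$, choose $\alpha,\beta\in\Phi_{\geq -1}$ with $\iota(x[\alpha])=[\bomega]$, $\iota(x[\beta])=[\bomega']$, and observe that if $x[\alpha]x[\beta]$ were \emph{not} a cluster monomial then it would be a positive $\bz_{\geq 0}[f_i]$-combination of at least two cluster monomials, so applying the (already-established) isomorphism $\iota$ would express the \emph{irreducible} class $[\bomega\bomega']$ as a nontrivial positive combination of classes $[\bpi]$, contradicting their linear independence in $\cal K_0(\cal F_\xi)$. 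This one observation says: any product of HL-classes that happens to be irreducible automatically comes from a cluster monomial. The reducible case then follows immediately, because Theorem~\ref{t:reduc} already writes $[\bpi][\bpi']$ as a sum of two terms, each of which is a product of HL-classes that equals an irreducible class; hence each term is the image of a cluster monomial, and no compatibility needs to be checked.

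Your proposal correctly identifies Theorems~\ref{irredcrit} and \ref{t:reduc} as the right inputs, but then undertakes a direct pairwise compatibility verification for the factors appearing in each constituent of Theorem~\ref{t:reduc}, using the root-compatibility criterion stated after Theorem~\ref{main}. This has two problems. First, that criterion is in the paper presented as a \emph{consequence} of Theorem~\ref{main}, whose proof uses Proposition~\ref{clustermon}; so invoking it here risks circularity unless you supply an independent combinatorial derivation. Second, and more importantly, the approach replaces the paper's two-line indirect argument with a genuinely large case analysis over the four families in Theorem~\ref{t:reduc}(i)--(iii) (and their $\delta$-governed subcases), which you yourself describe as tedious and do not actually carry out; the assertion about incompatible almost-positive roots always forming an exchange pair in type $A$ is true but stated without justification and is not needed in the paper's approach. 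In short: your proposal would probably work if executed in full, but it takes a far more laborious and logically riskier route, and it misses the key lemma (irreducible products of HL-classes are automatically cluster monomials) that makes the verification trivial.
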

A much more precise statement can be found in Theorem \ref{irredcrit} and Theorem \ref{t:reduc} in Sections \ref{irrtp} and \ref{reducible}. 
In the rest of this section we assume Proposition \ref{clusterind}, Proposition \ref{clusterrep}, Proposition \ref{clustermon} and prove Theorem \ref{main}. 
 \subsection{Existence of $\iota$}\label{iotaexists} Recall \cite{BFZ} that an element of $\cal A(\bx,Q_\xi)$ is said to be a standard monomial  if it is a monomial in the elements $\{x_i,x[\alpha_i]: i\in[1,n]\}$  and does not involve any product of the form $x_ix[\alpha_i]$, $i\in[1,n]$. It was
proved in \cite{BFZ} that  standard
monomials are a $\bz[f_i: i\in I]$--basis of $\cal A(\bx,\xi)$. 
\medskip

\noindent
On the other hand consider the quotient of the  polynomial ring (with integer coefficients) in variables $X_i, X[\alpha_i],F_i$, $i\in[1,n]$ subject to the first relation in Proposition \ref{clusterind}. It is not hard to show that this  ring is  the $
\bz[F_i: i\in I]$ span of
 monomials in $X_i, X[\alpha_i]$, $i\in[1,n]$
  which do not involve products of $X_i X[\alpha_i] $ for any $i\in[1,n]$. 
  It follows that 
 $\cal A(\bold x, Q_\xi)$ is is isomorphic to this quotient
 (compare with \cite[Lemma 4.4]{HL1}).
 \medskip
 \noindent 
Using  Proposition \ref{clusterrep} (i)  we have
\begin{eqnarray*}[\bomega_{i,\ \xi(i+1)}][\bomega(i,i+1)]^{1-d_i}[\bomega_{i,\xi(i+1)\pm 2}]^{d_i}= [\bof_i] [\bomega_{i+1, \xi(i+2)}]^{1-d_i} + [\bof_{i+1}]^{1-d_i}[\bomega_{i+1, \xi(i+2)}]^{d_i}  [\bomega_{i-1, \xi(i)}]\ .
\end{eqnarray*} It is now immediate that the assignment \begin{gather*}
x_i\to [\bomega_{i, \xi(i+1)}],\ \ \ f_i\to\bof_i,\ \ \ x[\alpha_i]\to [\bomega_{i,\xi(i+1)\pm 2}]^{\delta_{i,i_\diamond}}[\bomega(i,i+1)]^{1-\delta_{i,i_\diamond}}
\end{gather*}
defines a homomorphism 
of rings
$\iota: \cal A(\bx,Q_\xi)\to \cal K_0(\cal F_\xi)$.

\subsection{The elements $\iota(x[\alpha])$, $\alpha\in\Phi_{\ge -1}$}\label{iotaroot}
The formulae given in \eqref{iota1} and \eqref{iota2} can be rewritten as follows:
\begin{gather}\label{iota}\iota(x[\alpha_{i,j}])=[\bomega(i,j+1)]^{1-d_j}\ [\bomega_{i,\xi(i+1)\pm 2}]^{\delta_{i_\bullet, j_\bullet}d_j}\  [\bomega(i,j_\bullet+1)]^{d_j(1-\delta_{i_\bullet, j_\bullet})},\ \ j\ge i.
  \end{gather} 
 We shall prove this reformulation by induction on $j-i$. 
  Observe that  induction begins when $j=i$ by definition.  
  For the inductive step apply $\iota$ to both sides of the second equation in Proposition \ref{clusterind}. We will show that the right  hand side of this equation is the same as the right  hand side of the equation in Proposition \ref{clusterrep}(ii), (iii). Hence the left hand sides must match up. The inductive step is  immediate once we observe that $\cal K_0(\cal F_\xi)$ has no zero divisors.
  
To prove that the right hand sides are the same, suppose first that  $j_\bullet\le i$ (in particular $j_\bullet=i_\bullet$ or $j_\bullet=i$). Applying $\iota$ to both sides of the second equation in Proposition \ref{clusterind} gives
\begin{eqnarray*} &
[\bomega_{j,\xi(j+1)}]\iota(x[\alpha_{i,j}])&= \bof_j^{d_{j-1}}\iota(x[\alpha_{i,j-1}])[\bomega_{j+1,\xi(j+2)}]^{1-d_j}\\&&  +\ \bof_{j+1}^{1-d_j}\bof_i^{\delta_{i,j_\bullet}}[\bomega_{j+1,\xi(j+2)}]^{d_j}[\bomega_{i-1,\xi(i)}]^{1-\delta_{i,j_\bullet}}.
\end{eqnarray*}
The second term on the right hand side of  the preceding equation is equal to the the second term on the right hand side of the equation in Proposition \ref{clusterrep}(ii). To see that the first terms match up we  use the inductive hypothesis for $\iota(x[\alpha_{i,j-1}])$ and see that it suffices to prove that,
 \begin{eqnarray*}[\bomega_{i, \xi(i+1)\pm 2}]^{d_{j-1}}=\left([\bomega_{i,\xi(i+1)\pm 2}]^{\delta_{i_\bullet, (j-1)_\bullet}}[\bomega(i, (j-1)_\bullet+1)]^{1-\delta_{i_\bullet, (j-1)_\bullet}}\right)^{d_{j-1}}
  .\end{eqnarray*}
  If $d_{j-1}=0$, then the  preceding equality is obviously true. Since $$d_{j-1}=1\implies (j-1)=(j-1)_\diamond=j_\bullet=i\implies  i_\bullet=(j-1)_\bullet$$ and the equality  follows.

If   $i<j_\bullet$, then  the result follows if we prove that,\begin{gather*} 
\iota(x[\alpha_{i,j-1}])= [\bomega(i,j)]^{1-d_{j-1}}\left([\bomega_{i,z}]^{\delta_{k,i_\bullet}}[\bomega(i,k+1)]^{1-\delta_{k,i_\bullet}} \right)^{d_{j-1}}, \\ 
\iota(x[\alpha_{i,j_\bullet-1}])= [\bomega(i,j_\bullet)]^{1-d_{j_\bullet-1}}\left([\bomega_{i,z}]^{\delta_{i_\bullet,k_\bullet}}[\bomega(i,k_\bullet+1)]^{1-\delta_{i_\bullet,k_\bullet}}\right)^{d_{j_\bullet-1}}.
\end{gather*}  where we recall that $k=(j_\bullet)_\bullet$.  
If $d_{j-1}=0$ the first equality follows from the definition and the inductive hypothesis and  if $d_{j-1}=1$ then   $(j-1)=j_\bullet$ and so $(j-1)_\bullet =k$. The first equality again follows from the inductive hypothesis. The second equality is  deduced in the same way from the inductive hypothesis.

\subsection{} We prove now that $\iota$ is an isomorphism.  Let $\{\omega_1,\cdots,\omega_n\}$  which are dual to the simple roots of $A_n$ and $P^+$ be their  $\bz_+$--span. It is convenient to set $\omega_0=\omega_{n+1}=0$. Let $\le $ be the usual partial order on $P^+$ given by  $\mu\le \lambda$ iff $\lambda-\mu$ is in the $\bz_+$--span of $\{\alpha_1,\cdots,\alpha_n\}$. 

Define a morphism of monoids  $\wt:\cal P^+_\xi\to P^+$  by setting $\wt\bomega_{i,a}=\omega_i$.  Since $\cal F_\xi$ is a tensor category it is well--known that the following holds in $\cal K_0(\cal F_\xi)$; for  $\bomega=\bomega_{i_1,a_1}\cdots\bomega_{i_k,a_k}\in\cal P^+_\xi$: \begin{equation}\label{weylko}[\bomega_{i_1,a_1}]\cdots[\bomega_{i_k,a_k}]=[\bomega]+\sum_{\stackrel{\bpis\in\cal P^+_\xi}{\wt\bpis<\wt\bomegas}} r(\bomega,\bpi)[\bpi],\ \ {\rm{for\ some}}\ \ r(\bomega,\bpi)\in\bz_+.\end{equation} 
A straightforward induction on $\wt\bomega$ shows that 
 $\cal K_0(\cal F_\xi)$ is generated as a ring by the elements $[\bomega_{i,\xi(i)\pm 1}]$.
By Section \ref{iotaroot} we see that $\iota(\{x[-\alpha_i], x[\alpha_{i,i_\diamond}]\})=\{[\bomega_{i,\xi(i)+1}],[\bomega_{i,\xi(i)-1}]\}$ 
and hence it follows that
  $\iota$ is surjective.
 
We  prove that $\iota$ is injective. Set $$\wt_\ell x_i= \bomega_{i,\xi(i+1)}, \ \ \wt_\ell f_i=\bof_i, \  \wt_\ell x[\alpha_i] = \bpi, \ \ {\rm such \ that} \ \  \iota(x[\alpha_i]) = [\bpi].$$
Extend $\wt_\ell$ in the obvious way to the basis of $\cal A(\bx,\xi)$; if $\bom=
x_1^{p_1}\cdots x_n^{p_n}x[\alpha_1]^{m_1}\cdots x[\alpha_n]^{m_n}$  is a standard monomial in $\cal A(\bx,\xi)$ and   $\bof=f_1^{
r_1}\cdots f_n^{r_n}\in\bz[f_1^{\pm 1},\cdots ,f_n^{\pm 1}]$ then 
\begin{gather*}
\wt_\ell\bof
\bom= \prod_{i=1}^n\bof_i^{r_i}
\bomega_{i,\ \xi(i+1)}^{p_i}\left(\bomega_{i,\ \xi(i+1)+ 2}^{\delta_{\xi(i),\xi(i+1)+1}}\bomega_{i,\xi(i+1)-2}^{\delta_{\xi(i),\xi(i+1)-1}}
\bomega_{i+1,\ \xi(i+2)}
^{1-d_i}\right)^{m_i}.\end{gather*}

\begin{lem}\label{distinct}
Let $\bom,\bom'$ be standard monomials in $\cal A
(\bx, Q_\xi)$ and $\bof,\bof'$ be monomials in $\{f_i: i\in[1,n]\}$.
Then $$\wt_\ell\bof
\bom=\wt_\ell\bof'
\bom'\iff \bof=\bof'\ \ {\rm{and}}\ \ \bom=\bom'.$$
\end{lem}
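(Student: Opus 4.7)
\medskip
\noindent\textbf{Proof plan.} The monoid $\cal P^+_\xi$ is free abelian on the $2n$ generators $\{\bomega_{i,\xi(i)+1},\bomega_{i,\xi(i)-1}:1\le i\le n\}$, so equality of two such monomials is equivalent to equality of the exponent at each generator. My plan is to read off, from the closed formula for $\wt_\ell\bof\bom$ displayed just before the lemma, the exponent of $\bomega_{i,\xi(i)\pm 1}$ as an explicit linear function of $r_i,p_i,m_i,m_{i-1}$, and then invert this system by induction on $i$ using the standard monomial constraint $p_im_i=0$.

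Concretely, set $\epsilon_i=\xi(i+1)-\xi(i)\in\{+1,-1\}$ so that $\bomega_{i,\xi(i+1)}=\bomega_{i,\xi(i)+\epsilon_i}$. I would check that in the given formula for $\wt_\ell\bof\bom$: the factor $\bof_i^{r_i}$ contributes $r_i$ to both $\bomega_{i,\xi(i)+1}$ and $\bomega_{i,\xi(i)-1}$; the factor $x_i^{p_i}$ contributes $p_i$ to $\bomega_{i,\xi(i)+\epsilon_i}$; the factor $x[\alpha_i]^{m_i}$ contributes $m_i$ to $\bomega_{i,\xi(i)-\epsilon_i}$ (the two Kronecker deltas handle the two cases $\xi(i)=\xi(i+1)\pm 1$ and one verifies $\xi(i+1)\pm 2=\xi(i)\pm 1$); and the factor $x[\alpha_{i-1}]^{m_{i-1}}$ contributes $(1-d_{i-1})m_{i-1}$ to $\bomega_{i,\xi(i)+\epsilon_i}$ via its trailing factor $\bomega_{i,\xi(i+1)}^{1-d_{i-1}}$. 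Writing
\begin{align*}
A_i&=\text{exponent of }\bomega_{i,\xi(i)+\epsilon_i}\text{ in }\wt_\ell\bof\bom=r_i+p_i+(1-d_{i-1})m_{i-1},\\
B_i&=\text{exponent of }\bomega_{i,\xi(i)-\epsilon_i}\text{ in }\wt_\ell\bof\bom=r_i+m_i,
\end{align*}
(with the convention $m_0=0$) one gets $2n$ equations in the $3n$ nonnegative unknowns $\{r_i,p_i,m_i\}$.

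I would then invert this system by induction on $i$. Assuming $m_{i-1}$ has already been recovered, let $A_i':=A_i-(1-d_{i-1})m_{i-1}$. Then $p_i+r_i=A_i'$ and $m_i+r_i=B_i$ with $p_im_i=0$ (because $\bom$ is a standard monomial), and nonnegativity of $r_i,p_i,m_i$ forces
\[
(r_i,p_i,m_i)=\begin{cases}(B_i,\,A_i'-B_i,\,0),&A_i'\ge B_i,\\(A_i',\,0,\,B_i-A_i'),&A_i'<B_i,\end{cases}
\]
so $r_i,p_i,m_i$ are uniquely determined from $A_i,B_i$ and $m_{i-1}$. The boundary cases $i=1$ (where $m_0=0$) and $i=n$ (where $d_{n-1}=d_n=1$ kills the factors $\bomega_{i+1,\xi(i+2)}^{1-d_i}$ that would involve an undefined $\xi(n+2)$) cause no issue. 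The converse implication is immediate.

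The only real obstacle is the bookkeeping in the first paragraph, namely checking that each generator $\bomega_{i,\xi(i)\pm 1}$ receives contributions from exactly the four sources listed above; once that identification is in place the recursion is forced and the lemma follows.
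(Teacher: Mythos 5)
Your proof is correct and follows essentially the same route as the paper's: compare exponents of the generators $\bomega_{i,\xi(i)\pm 1}$ in the free abelian monoid $\cal P^+_\xi$, exploit nonnegativity together with the standard-monomial constraint $p_im_i=0$ to pin down $(r_i,p_i,m_i)$, and induct on $i$. The paper treats $i=1$ explicitly (deriving the two equations $r_1+p_1=r_1'+p_1'$, $r_1=r_1'+m_1'$ and excluding $m_1'\ne 0$ by a sign contradiction) and then "iterates"; your reformulation in terms of $A_i,B_i,A_i'$ makes the $m_{i-1}$ carryover and the recursion explicit, which is a cleaner presentation of the same argument, but the underlying mechanism is identical.
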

\begin{pf} Write $$\bom=
x_1^{p_1}\cdots x_n^{p_n}x[\alpha_1]^{m_1}\cdots x[\alpha_n]^{m_n},\ \ \bof=f_1^{
r_1}\cdots f_n^{r_n},$$ and let $\bom',\bof'$ be defined similarly with $p_i$ replaced by $p_i'$ etc. If $p_1>0$ then $m_1=0$ and using the fact that $\cal P^+_\xi$ is a free abelian monoid
we  have $$\bof_1^{r_1}\bomega_{1, \xi(2)}
^{p_1}=
\bof_1^{r_1'}\bomega_{1,\ \xi(2)}^{p_
1'}\bomega_{1,\ \xi(2)+ 2
}^{m_1'\delta_{\xi(1), \xi(2)+1}}\bomega_{1,\ \xi(2)- 2
}^{m_1'\delta_{\xi(1),\xi(2)-1}}.$$ Since $\bof_1=\bomega_{1,\xi(1)+1}\bomega_{i,\xi(1)-1}$, we get
 $$r_1+p_1=r_1'+p_1',\ \ r_1=m_1'+r_1'.$$ If $m_
 1'\ne 0$ then $p_1'=0$ and we have $r_1>r_1'$ and 
 $r_1'>r_1$  which is absurd. Hence $m
_1'=0$ and so $r_1'=r_1$ and $p_1'=p_1$. Writing $\bom= x_1
^{p_1}\bom_1$ and $\bom'=x_1
^{p_1}\bom_1'$
we see  that $\bom_1$ and $\bom_1'$ are both standard monomials and  $$\wt_\ell f_2^{r_2}\cdots f_n^{r_n}\bom_1=\wt_\ell f_2^{r_2'}\cdots f_n^
{r_n'}
\bom_1'.$$
 An obvious iteration  of the preceding argument proves the Lemma.
\end{pf} 

Suppose that $$\iota\left(\sum_{r,s} c_{r,s}\bof(s) \bom_r\right)=0, $$
where   
 $\bom_r$ varies over 
 standard monomials in $\cal A(\bx,Q_\xi)$,
 and $\bof(s)$ varies over
  monomials in $f_i$, $i\in[1,n]$ and $c_{r,s}\in\bz$ with only finitely many being non--zero.
 Assume for a contradiction that $c_{r,s}\ne 0$ for some $r,s$ and let $\lambda$ be a maximal element (with respect to the partial order on $P^+$) of the set
 $\{\wt(\wt_\ell\bof(s)\bom_r): c_{r,s}
  \ne 0\}.$
  Using \eqref{weylko} we get  $$0=
  \sum_{\wt(\wt_\ell\bof(s) \bom_r)=\lambda}
  c_{r,s} [\wt_\ell\bof(s)\bom_r] + \sum_{ \wt\bomegas \ngtr \lambda  } n_\bomegas[\bomega],\ \ \ \  n_\bomegas\in \bz.
$$
Since the elements $[\bomega]$, $\bomega\in\cal P^+_\xi$ are linearly independent elements of $\cal K_0(\cal F_\xi)$ we get  $$  \sum_{\wt(\wt_\ell\bof(s) \bom_r)=\lambda}
c_{r,s} [\wt_\ell\bof(s)\bom_r] =0.$$ By Lemma \ref{distinct} the elements $
[\wt_\ell(\bof(s)\bom_r)]$ are all distinct and hence also linearly independent.  This forces  $c_{r,s}=0$ contradicting our assumption and  proves that $\iota$ is injective.

\subsection{The elements $\iota(x[\beta_1]x[\beta_2])$} 

We now prove the final assertion of the theorem. Write $[\bomega_s]=\iota([x[\beta_s])$, $s=1,2$ and let $\bomega=\bomega_1\bomega_2$. Assuming  that $[\bomega]\ne[\bomega_1][\bomega_2]$ we shall prove that $x[\alpha]x[\beta]$ is not a cluster monomial.
 By Proposition \ref{clustermon} we can write $[\bomega_1][\bomega_2]$ as the non-trivial sum of elements which are imaged  under $\iota$ of cluster monomials. Since cluster monomials are linearly independent and $\iota$ is an isomorphism we see that $x[\beta_1]x[\beta_2]$ is not a cluster monomial and the proof of the main theorem is complete.

  \section{Proof of Proposition \ref{clusterind} and a $q$-character formula.}\label{proofofclusterind}
  In this section we prove Proposition \ref{clusterind} which is a  recursive formula for a cluster variable. We also solve this recursions and give a closed formula for the cluster variable in terms of the initial cluster and the frozen variables. In view of Section \ref{iotaroot} this formula can also be viewed as giving the $q$--character of $[\bomega]$, $\bomega\in\bp\bor_\xi$ in terms of the local Weyl modules and Kirillov--Reshetikhin modules.
  
  \subsection{} \label{cluster}
  We briefly recall the definition (see \cite{FZ}) of a cluster algebra. Let $Q$ be a quiver with $(n+m)$-vertices labeled $\{1,\cdots, n, 1',\cdots, m'\}$ and assume that the set of edges has no  loops or $2$-cycles.   A mutation of $Q$ at a vertex $i$  is the quiver obtained by  performing   the following three operations.
  \begin{itemize}
  \item reverse all edges at $i$,
  \item given edges $j\to i\to k$ add a new  edge $j\to k$,
  \item remove any two cycles that may have been created.
  \end{itemize} 
  We shall assume that mutation is never allowed at the vertices labeled $\{1',\cdots,m'\}$; these  are called the frozen vertices.
  Suppose that $\bold x =\{x_1,\cdots, x_n, f_1,\cdots, f_m\}$ is an algebraically independent set and let $\mathbb Q (\bold x)$ be the field of rational functions in these variables. The set $\bold x$ is called the initial cluster and $(\bold x, Q)$ is called the initial seed. 
  
 Corresponding to a mutation of $Q$ at a vertex $i$ define a new cluster $\bold x'=\{x_1',\cdots, x_n',f_1,\cdots, f_m\}$ by 
   $$x_j'= x_j,\ \ j\neq i,\ \ x_i'x_i = \prod_{\substack{\exists\ j\to i\\ {\rm in} \ Q}}f_j\prod_{\substack{\exists\ j\to i\\ {\rm in} \ Q}}x_j + \prod_{\substack{\exists\ i\to k\\ {\rm in} \ Q}}f_k \prod_{\substack{\exists\ i\to k \\ {\rm in}\ Q}}x_k.$$ The new cluster again consists of algebraically independent elements and we have a new seed $(\bx', Q')$ where $Q'$ is the mutation of $Q$ at $i$. Iterating this process defines a collection of new clusters and new seeds. An element of a given cluster is called a cluster variable. A cluster monomial is a product of cluster variables all belonging to the same cluster.
   The associated cluster algebra is the $\bz$ subring (of the field of rational function $\mathbb Q(\bold x)$) generated by all the cluster variables.  

   \subsection{The quiver $ Q_\xi[i,j]$}\ 
Given $1\le i\le n-1$ set $Q_\xi= Q_\xi[i,j]$ if $j<i$ and let $Q_\xi[i,i]$ be obtained by mutating $Q_\xi$ at $i$. Assume that we have defined $Q_\xi[i,j-1]$ for $j>i$   let $Q_\xi[i,j]$ be the quiver defined   by mutating $Q_\xi[i,j-1]$ at $j$.   
\medskip

\noindent Proposition \ref{clusterind} is  a simple inspection  when $j=i$ and if $j>i$ then it is a consequence of the discussion in Section \ref{cluster}, the following Lemma  and an induction on $j-i$. 
  
\begin{lem}
  Suppose that $j> i
$  and that we have an arrow $(j-1)\to j$ in $Q_\xi$. In  $Q_\xi[i,j-1]$ we have the following edges at the vertex $j$:

\vspace{5pt}

$$\xymatrix{
\max\{i-1,j_\bullet-1\}\ar@/^2pc/[rrr]^{a_j}  &  & (j-1) & j\ar[l]\ar[d]_{d_{j-1}} \ar@/^1pc/[rr]^{1-d_j} & & (j+1)\ar@/^1pc/[ll]_{d_j}\\
& \max \{i,j_{\bullet}\}'\ar[urr]_{b_j} & &j' & & (j+1)'\ar[llu]^{1-d_j} &
},$$
where $a_j = 1-\delta_{i,j_\bullet}$ and $b_j = \min\{1, (1-\delta_{j_\bullet,i_\bullet})d_{j_\bullet-1} + \delta_{j_\bullet,i}\}$.
\end{lem}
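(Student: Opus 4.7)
My plan is to prove this lemma by induction on $j-i$, carefully tracking how sequential mutations at the vertices $i, i+1, \ldots, j-1$ transform the local structure of the quiver near $j$. A useful preliminary observation is that because the principal (non-frozen) part of $Q_\xi$ is of type $A$, mutation at a vertex $k$ only creates new edges between vertices that were already joined to $k$. Consequently, throughout the sequence of mutations leading to $Q_\xi[i,j-1]$, the only vertex whose mutation affects the edges at $j$ is $j-1$ itself; all earlier mutations modify only the local neighborhood near $j-1$ (and further left). This reduces the problem to: (i) compute the edges incident to $j-1$ in $Q_\xi[i,j-2]$, and (ii) apply one more mutation at $j-1$.

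For the base case $j=i+1$, since we assume $i\to i+1$ in $Q_\xi$, the mutation rule must be applied to the subquiver of $Q_\xi$ near $i$, for which the hypothesis $(j-1)\to j$ translates (via the description of $Q_\xi$ recalled at the beginning of Section~\ref{main}) into a statement about $\xi(i)-\xi(i+1)$ together with the value of $d_i$. A direct inspection in the (at most four) sub-cases determined by $i=i_\diamond$ versus $i\ne i_\diamond$, and by the sign of $\xi(i-1)-\xi(i)$, verifies that the picture in the lemma holds, with the identifications $\max\{i-1,j_\bullet-1\}=i-1$, $\max\{i,j_\bullet\}'=i'$, and the labels $a_j, b_j$ correctly produced by the composition rule $x\to i\to y \rightsquigarrow x\to y$.

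For the inductive step $j>i+1$, I would apply the induction hypothesis at the pair $(i,j-1)$ to extract the edges at $j-1$ in $Q_\xi[i,j-2]$ (with the appropriate orientation depending on whether $(j-2)\to(j-1)$ or the reverse holds in $Q_\xi$). The claim then follows by executing the mutation at $j-1$: reversing $(j-1)\to j$ gives the arrow $j\to(j-1)$, and the composition rule, applied to the arrows into $j-1$ produced by the inductive hypothesis (coming from $\max\{i-1,(j-1)_\bullet-1\}$ and $\max\{i,(j-1)_\bullet\}'$) combined with the outgoing arrow $(j-1)\to j$, produces the curved arrow from $\max\{i-1,j_\bullet-1\}$ (with label $a_j=1-\delta_{i,j_\bullet}$) and the arrow from the frozen vertex $\max\{i,j_\bullet\}'$ (with label $b_j$). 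The remaining arrows, namely $j\to j'$ with label $d_{j-1}$, and the curved pair between $j$ and $j+1$ with labels $1-d_j$ and $d_j$, are present already in $Q_\xi$ and are untouched by the mutation at $j-1$.

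The main obstacle is the case analysis governing the label $b_j$. The formula $b_j = \min\{1, (1-\delta_{j_\bullet,i_\bullet}) d_{j_\bullet-1} + \delta_{j_\bullet,i}\}$ captures three distinct scenarios: when $i=j_\bullet$ the frozen contribution comes directly from $i'$; when $i<j_\bullet$ and $j_\bullet\ne i_\bullet$, the frozen vertex contributes only if $j_\bullet$ is a sink/source (i.e.\ $d_{j_\bullet-1}=1$), since otherwise the would-be arrow has already been cancelled in an earlier mutation by a $2$-cycle; and when $j_\bullet=i_\bullet$ the corresponding frozen arrow has been cancelled during the mutation at $j_\bullet$ itself. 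Verifying that these cancellations occur exactly when the formula predicts is the heart of the argument and is where the transitions between the two adjacent ``monotone intervals'' meeting at $j_\bullet$ need to be handled with care; I would track this by splitting the index range $[i,j-1]$ at $j_\bullet$ and using the inductive hypothesis separately on each monotone segment.
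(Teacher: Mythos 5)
Your proposal follows essentially the same route as the paper: induction on $j-i$, a base case at $j=i+1$ checked by direct inspection of the mutation at $i$, and an inductive step that applies the hypothesis at the previous vertex and performs the single mutation at $j-1$, split into cases according to $d_{j-1}$ (equivalently, the orientation of the edge between $j-1$ and $j$). Your preliminary observation that only the mutation at $j-1$ can alter the edges at $j$ is correct and is exactly what the paper uses implicitly when it works with the local pictures.

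Two small caveats on the part you defer. First, your explanation of the scenario $j_\bullet=i_\bullet$ ("the frozen arrow has been cancelled during the mutation at $j_\bullet$ itself") cannot be right as stated, since $j_\bullet=i_\bullet<i$ means $j_\bullet$ is never mutated in forming $Q_\xi[i,j-1]$; in that case the frozen arrow simply never gets created. Second, the proposed device of splitting $[i,j-1]$ at $j_\bullet$ is unnecessary: the inductive hypothesis already carries all the information about the segment to the left of $j-1$, and the labels update by the elementary identities $d_{j-1}=1\Rightarrow j_\bullet=j-1$ and $d_{j-1}=0\Rightarrow j_\bullet=(j-1)_\bullet$, which immediately give $a_j,b_j$ from $a_{j-1},b_{j-1}$ in each of the two cases. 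With those adjustments your plan coincides with the paper's proof.
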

  \begin{proof}
  We proceed by induction on $j-i$. To see that induction begins when $j=i+1$ notice that \begin{gather*} d_i=1\implies i=(i+1)_\bullet\implies a_{i+1}=0, \ \ b_{i+1}=1,\\ d_i=0\implies i_\bullet= (i+1)_\bullet\implies \  a_{i+1}=1,\ \ b_{i+1}=0.\end{gather*} On the other hand in $Q_\xi[i,i]$ which is the 
mutation of  $Q_\xi$ at $i$ an inspection show that the edges at $i+1$ are given as follows: 
$$\xymatrix{ 
  i  & (i+1)\ar[l]\ar[d]\ar@/^1pc/[rr]^{1-d_{i+1}} & & (i+2)\ar@/^1pc/[ll]_{d_{i+1}}  \\
 i' \ar[ur] & (i+1)'  & & (i+2)' \ar[ull]^{1-d_{i+1}} 
 },\ \ \ d_i=1,$$  
 
  $$\xymatrix{ 
(i-1)\ar@/^2pc/[rr] & i & (i+1) \ar[l]\ar@/^1pc/[rr]^{1-d_{i+1}} & & (i+2)\ar@/^1pc/[ll]_{d_{i+1}}  \\
 &  &  & & (i+2)' \ar[ull]^{1-d_{i+1}} 
 },\ \ d_i=0,$$
and it follows that induction begins.
For the inductive step we assume that the  result holds for the edges at  $j<n$  in  $Q_\xi[i,j-1]$ for and prove that it holds  for the node $j+1$ in $Q_\xi[i,j]$. 
\medskip

{\bf  Case 1}. If  $d_j=1$ then $j$ is a sink of $Q_\xi$ by assumption and so  we have an edge $(j+1)\to j$ in $Q_\xi$. Hence by the inductive hypothesis the edges at $j$ and $(j+1)$ in $Q_\xi[i,j-1]$  are
$$\xymatrix{
 \max\{i-1,j_\bullet-1 \}\ar@/^2pc/[rrr]^{a_j} &  & (j-1) & j\ar[l] \ar[d]_{d_{j-1}}& (j+1)\ar[l]\ar[rrd]_{1-d_{j+1}} \ar@/^1pc/[rr]^{d_{j+1}}& &(j+2)\ar@/^1pc/[ll]_{1-d_{j+1}} \\
& \max\{i,j_{\bullet}\}'\ar[urr]^{b_j}& &j' & (j+1)' \ar[u]& &(j+2)'.
}$$
Mutating at $j$ we see that the edges at $(j+1)$ are
$$\xymatrix{
 (j-1)\ar[r] & j\ar[r]  & (j+1) \ar[ld]_{d_{j-1}}\ar@/_2pc/[ll]\ar[drr]_{1-d_{j+1}}\ar@/^1pc/[rr]^{d_{j+1}} & & (j+2)\ar@/^1pc/[ll]_{1-d_{j+1}}\\
&j'& (j+1)'\ar[u] & & (j+2)'
}.$$
The inductive 
step follows  since  $d_j=1\implies (j+1)_\bullet= j$ and so   $$\max\{i-1, (j+1)_\bullet-1\}= j-1,\ \ \max\{i, (j+1)_\bullet\}' = j',\ \  
\ \
a_{j+1} =  1=d_j,\ \  b_{j+1} = d_{j-1}.$$

{\bf Case 2.}
If $d_j=0$ or equivalently $j_\diamond\neq j$ then in $Q_\xi$ we have an edge $j\to j+1$. By the induction hypothesis, the edges at $j$ and $(j+1)$ in $Q_\xi[i,j-1]$ are 
$$\xymatrix{
\max\{i-1,j_\bullet-1 \}\ar@/^2pc/[rrr]^{a_j} &  & (j-1) & j\ar[d]_{d_{j-1}} \ar[l]\ar[r]  & (j+1)\ar[d]\ar@/^1pc/[rr]^{1-d_{j+1}}& & (j+2) \ar@/^1pc/[ll]_{d_{j+1}}\\
& \max\{i,j_\bullet\}' \ar[urr]^{b_j}& &j' & (j+1)'\ar[ul] & & (j+2)'\ar[ull]^{1-d_{j+1}} 
}.\\ \\ $$

Mutating at $j$ we obtain 
$$\xymatrix{
\max\{i-1,j_\bullet-1 \}\ar@/^2pc/[rrr]^{a_j} &   & j & (j+1)\ar[l]\ar@/^1pc/[rr]^{1-d_{j+1}} & &(j+2)\ar@/^1pc/[ll]_{d_{j+1}}\\
& \max\{i,j_\bullet\}'\ar[urr]^{b_j}&& & & (j+2)'\ar[ull]^{1-d_{j+1}}
}\\ \\ $$
The inductive step follows from the fact that $d_j=0\implies (j+1)_\bullet = j_\bullet<j$ and so
 $$\max\{i-1, (j+1)_\bullet-1\}= \max\{i-1, j_\bullet-1\},\ \ \max\{i, (j+1)_\bullet\}' = \max\{i, j_\bullet\}',$$ and $$a_{j+1} =  a_j,\ \ b_{j+1} =  b_j,\ \ d_{j} = 0.$$ The proof of the Lemma is complete.

  \end{proof}
  
  \subsection{The set $\Gamma_{i,j}$}  We continue to set $d_m=\delta_{m,m_\diamond}$ for $1\le m\le n$.
For $i,j\in[1,n]$ define sets $\Gamma_{i,j}$ as follows: $\Gamma_{i,j}=\{0\}$ if $  j<i $ and if $i\le j$ then $\Gamma_{i,j}$ is the subset of $\bz_+^{j-i+2}$ consisting of elements 
 $\bepsilon=(\epsilon_i,\cdots, \epsilon_{j+1})$ satisfying the following conditions: for $r,m\in[i,j]$ with $r\le m$ and  $\sigma_{r,m}(\bepsilon)=\epsilon_r+\cdots +\epsilon_m$,  we have
 \begin{gather} \label{ineq3}
\epsilon_{j+1}= 1+ (d_j-1)\sigma_{\max\{i,j_\bullet+1\}, j}(\bepsilon)\\
\label{ineq2}
\sigma_{\max\{i,j_\bullet+1\}, j}(\bepsilon)\le 1,\ \ \\
\label{ineq1a} \sigma_{i,i_\diamond}(\bepsilon)\le 1 \le  \sigma_{i,i_\diamond+1}(\bepsilon)\ \ {\rm{if}}\ \  i_\diamond\le j,\\
\label{ineq1}
\sigma_{m+1, (m+1)_\diamond}(\bepsilon)\le 1 \le \sigma_{m+1, (m+1)_\diamond+1}(\bepsilon),\ \ {\rm{if}}\ \  i_\diamond \le m=m_\diamond< j_\bullet
\end{gather}
   Clearly, $\epsilon_m\in\{0,1\}$ for $i\le m\le j+1$. For $i\le j$ let
 $$\Gamma_{i,j}^1=\{\bepsilon\in\Gamma_{i,j}: \sigma_{\max\{i,j_\bullet+1\}, j}(\bepsilon)=1\},\ \ \Gamma_{i,j}^0=\{\bepsilon\in\Gamma_{i,j}: \sigma_{\max\{i,j_\bullet+1\}, j}(\bepsilon)=0\}.$$ The condition in  \eqref{ineq2} shows that $$\Gamma_{i,j}=\Gamma_{i,j}^1\sqcup\Gamma_{i,j}^0.$$
 We shall use the following freely:
 \begin{equation}\label{trivob} d_{m-1}=0\iff (m-1)_\bullet=m_\bullet,\ \ d_{m-1}=1 \iff m_\bullet=m-1.\end{equation}
 \begin{lem} \label{gammaij}  For $j>i$ the assignments \begin{gather*}
(\epsilon_i,\cdots, \epsilon_j)\to  (\epsilon_i,\cdots, \epsilon_j, d_j),\ \ \  \\
(\epsilon_i,\cdots,\epsilon_{j_
\bullet}) \to (\epsilon_i+\delta_{i,j_\bullet},\cdots,\epsilon_{j_\bullet},0,\cdots,0, 1), \end{gather*} define bijections $\iota_{j-1}:\Gamma_{i,j-1}\to \Gamma_{i,j}^1$ and $\iota_{j_\bullet-1}:\Gamma_{i,j_\bullet-1}\to \Gamma_{i,j}^0$ respectively.
\end{lem}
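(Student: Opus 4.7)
The plan is to verify that each map sends its source into the specified target and then exhibit a two-sided inverse; since both candidate inverses are essentially truncations, the bijectivity will follow once well-definedness in both directions is established. For $\iota_{j-1}$, the inverse is truncation of the last coordinate; for $\iota_{j_\bullet-1}$, the inverse drops the trailing block of zeros at positions $j_\bullet+1,\dots,j$ together with the final $1$ at position $j+1$ and subtracts $\delta_{i,j_\bullet}$ from the first coordinate.

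I would split the analysis for $\iota_{j-1}$ according to whether $d_{j-1}=1$ (in which case $j_\bullet=j-1$) or $d_{j-1}=0$ (in which case $j_\bullet=(j-1)_\bullet$), using \eqref{trivob}. In the former case, the defining recursion for $\epsilon_j$ in $\Gamma_{i,j-1}$ forces $\epsilon_j=1$, and since $\max\{i,j_\bullet+1\}=j$, appending $d_j$ gives $\sigma_{\max\{i,j_\bullet+1\},j}(\iota_{j-1}(\bepsilon))=\epsilon_j=1$ and $\epsilon_{j+1}=d_j=1+(d_j-1)\cdot 1$, which is exactly \eqref{ineq3} combined with membership in $\Gamma_{i,j}^1$. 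In the latter case, $\max\{i,j_\bullet+1\}=\max\{i,(j-1)_\bullet+1\}$, and the recursion $\epsilon_j=1-\sigma_{\max\{i,(j-1)_\bullet+1\},j-1}(\bepsilon)$ from \eqref{ineq3} at stage $j-1$ together with \eqref{ineq2} shows that $\sigma_{\max\{i,j_\bullet+1\},j}(\iota_{j-1}(\bepsilon))=1$, again landing in $\Gamma_{i,j}^1$.

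Next I would verify the inequalities \eqref{ineq1a} and \eqref{ineq1} for the image of $\iota_{j-1}$. For $i_\diamond\le j-1$ the inequality \eqref{ineq1a} is inherited verbatim from $\bepsilon$, and for $i_\diamond=j$ the forced value $\epsilon_{j+1}=d_j=1$ (since $j=j_\diamond$) supplies the lower bound while the upper bound follows from \eqref{ineq2} at stage $j-1$ via the recursion for $\epsilon_j$. The intermediate conditions \eqref{ineq1} at $m<j_\bullet$ transfer directly when $d_{j-1}=0$; when $d_{j-1}=1$, the only new index range is $m=j-1=j_\bullet$, which is excluded by the strict inequality $m<j_\bullet$ in \eqref{ineq1}, so nothing further is needed. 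The inverse map (truncation) sends $\Gamma_{i,j}^1$ into $\Gamma_{i,j-1}$ by reversing exactly the same argument, which completes the first bijection.

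For $\iota_{j_\bullet-1}$, I would first observe that any $\bepsilon'\in\Gamma_{i,j}^0$ must satisfy $\epsilon'_r=0$ for $r\in[\max\{i,j_\bullet+1\},j]$ and $\epsilon'_{j+1}=1$ by \eqref{ineq3} and the definition of $\Gamma_{i,j}^0$; hence $\bepsilon'$ is determined by its initial segment $(\epsilon'_i,\dots,\epsilon'_{j_\bullet})$ (which is empty when $i>j_\bullet$, forcing $\bepsilon'=(0,\ldots,0,1)$, matching the unique element of $\Gamma_{i,j_\bullet-1}=\{0\}$ under the map). When $i\le j_\bullet$, I would check that the initial segment, after subtracting $\delta_{i,j_\bullet}$ from the leading coordinate, satisfies the defining conditions of $\Gamma_{i,j_\bullet-1}$: the recursion \eqref{ineq3} at stage $j_\bullet-1$ reduces to $\epsilon'_{j_\bullet}=1$ when $d_{j_\bullet-1}=1$, which is guaranteed by \eqref{ineq1} at $m=(j_\bullet)_\bullet$ in $\Gamma_{i,j}$, while in the other case it reduces to a linear relation derived from \eqref{ineq1a} or \eqref{ineq1}; the inequalities transfer since every constraint in the source involves indices $\le j_\bullet$ and none of them touches the inserted zeros. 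The adjustment by $\delta_{i,j_\bullet}$ absorbs the fact that when $i=j_\bullet$ the image must satisfy $\sigma_{i,i_\diamond}(\bepsilon')\ge 1$ (because $i=i_\diamond$), while the source (being $\{0\}$) contributes no leading coordinate.

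The main obstacle is the bookkeeping of how \eqref{ineq1a} and \eqref{ineq1} transfer under $\iota_{j_\bullet-1}$ in the boundary case $i\le j_\bullet$: the constraint at the new index $m=j_\bullet$ in $\Gamma_{i,j}$ must be verified using the inserted block of zeros and the appended $1$, rather than inherited from the source. Once this is unwound via the identity $\sigma_{j_\bullet+1,(j_\bullet+1)_\diamond+1}(\bepsilon')=\epsilon'_{j+1}=1$, every condition matches term-by-term, and the two candidate inverses are manifestly well-defined, establishing both bijections.
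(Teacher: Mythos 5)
Your overall strategy — exhibit the truncation maps as two-sided inverses and verify well-definedness constraint by constraint — is the paper's approach, and you correctly identify the key recursion that links $\epsilon_j$ (via \eqref{ineq3}) to $\sigma_{\max\{i,j_\bullet+1\},j}(\bepsilon)=1$. However, there is a genuine gap in your handling of \eqref{ineq1}, precisely in the cases where the index range $\{m: i_\diamond\le m=m_\diamond<j_\bullet\}$ strictly grows when passing from $\Gamma_{i,j-1}$ (or $\Gamma_{i,j_\bullet-1}$) to $\Gamma_{i,j}$. For $\iota_{j-1}$ with $d_{j-1}=1$, you claim ``the only new index range is $m=j-1=j_\bullet$, which is excluded by the strict inequality $m<j_\bullet$.'' This is false: when $d_{j-1}=1$ we have $j_\bullet=j-1>(j-1)_\bullet$, so the range $m<j_\bullet=j-1$ for $\Gamma_{i,j}$ is strictly larger than the range $m<(j-1)_\bullet$ for $\Gamma_{i,j-1}$, and the new index $m=(j-1)_\bullet$ contributes a fresh constraint on $\bepsilon$ that must be checked (the paper does this using the forced value $\epsilon_j=1$ and \eqref{ineq2} for $\tilde\bepsilon$). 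The reverse direction has the mirror issue: establishing \eqref{ineq2} for $\tilde\bepsilon$ when $j_\bullet=j-1$ requires invoking \eqref{ineq1} for $\bepsilon$ at $m=(j-1)_\bullet$, which ``reversing exactly the same argument'' does not capture.

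A similar misidentification occurs for $\iota_{j_\bullet-1}$: you refer to ``the constraint at the new index $m=j_\bullet$,'' but \eqref{ineq1} requires $m<j_\bullet$, so there is no such constraint. The genuinely new indices are $m=(j_\bullet-1)_\bullet$ and, when $d_{j_\bullet-1}=1$, also $m=j_\bullet-1$; verifying \eqref{ineq1} there needs the inserted zeros, the appended $1$, \eqref{ineq2}, and \eqref{ineq3} for $\tilde\bepsilon$. Likewise, to show the inverse map is well-defined one must derive \eqref{ineq3} for $\tilde\bepsilon$ from either \eqref{ineq1a} (when $j_\bullet=i_\diamond$) or \eqref{ineq1} at $m=(j_\bullet)_\bullet$ (when $(j_\bullet)_\bullet\ge i_\diamond$), a case split your sketch does not make. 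The quantity $\sigma_{j_\bullet+1,(j_\bullet+1)_\diamond+1}(\bepsilon')$ you invoke is not one of the defining constraints of $\Gamma_{i,j}$ and may even involve indices beyond $j+1$, so the ``identity'' you propose to unwind does not supply the missing verifications. These are precisely the non-trivial steps in the paper's proof, and as written your argument skips them.
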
 
\begin{pf} For the first assertion of the Lemma we must  prove that $$\tilde\bepsilon=(\epsilon_i,\cdots,\epsilon_j)\in\Gamma_{i,j-1} \iff \bepsilon= (\epsilon_i,\cdots,\epsilon_j,d_j)\in\Gamma_{i,j}^1.$$
Clearly we have $ \sigma_{m,r}(\bepsilon)=  \sigma_{m,r}(\tilde\bepsilon)$ for all $ i\le m\le r\le j.$
Using \eqref{trivob} we see that
 $$(*)\ \ \epsilon_j=1+(d_{j-1}-1) \sigma_{\max\{i,(j-1)_\bullet+1\}, j-1}(\tilde\bepsilon)\iff \sigma_{\max\{i,j_\bullet+1\}, j}(\bepsilon)=1$$ It follows that $\tilde\bepsilon$ satisfies \eqref{ineq3} if $\bepsilon\in\Gamma_{i,j}^1$. It also proves that $\bepsilon$ satisfies \eqref{ineq3} and  \eqref{ineq2}
 if $\tilde\bepsilon\in\Gamma_{i,j-1}$. To see  that $\tilde\bepsilon$ satisfies \eqref{ineq2} if $\bepsilon\in\Gamma_{i,j}^1$ we note that this is clear if $(j-1)_\bullet=j_\bullet$ and if  $j_\bullet=j-1$ it follows from the fact that  $\bepsilon$ satisfies \eqref{ineq1} with $m=(j-1)_\bullet$. 
 
 It is obvious
 that $\tilde\bepsilon$ satisfies \eqref{ineq1a} (resp. \eqref{ineq1}) if $\bepsilon\in\Gamma_{i,j}^1$;  it is also obvious that $\bepsilon$ satisfies these inequalities if $\tilde\bepsilon\in\Gamma_{i,j-1}$  as long as $i_\diamond\le j-1$ (resp. $i_\diamond\le m< (j-1)_\bullet)$). 
  If  $i_\diamond=j$ then $d_j=1$ and $ j_\bullet=i_\bullet<i$. Using (*) and the fact that we have already proved that  $\bepsilon$ satisfies \eqref{ineq3} we get $$\sigma_{\max\{i,j_\bullet+1\},j}(\bepsilon)=1\le \max\sigma_{\max\{i,j_\bullet+1\},j+1}(\bepsilon)=2,$$ proving that \eqref{ineq1a} holds for $\bepsilon$.
 If $(j-1)_\bullet\le m=m_\diamond<j_\bullet$ then we must have  $  m=(j-1)_\bullet$, and $ j_\bullet=j-1=(m+1)_\diamond$.  It follows that $d_{j-1}=1$, $\epsilon_j=1$  and so we have
  $$\sigma_{(j-1)_\bullet+1, j-1}(\tilde\bepsilon)=\sigma_{(j-1)_\bullet+1, j-1}(\bepsilon)\le 1=\epsilon_j\le \sigma_{(j-1)_\bullet+1, j}(\bepsilon),$$    proving that $\bepsilon$ satisfies \eqref{ineq1}. 
The proof of the first assertion is complete.

We  prove the second assertion of the Lemma; note that if $\bepsilon\in\Gamma_{i,j}^0$ then we must have $\epsilon_m=0$ for $j_\bullet+1\le m\le j$ and hence by \eqref{ineq3} we also have $\epsilon_{j+1}=1$. Since $$j_\bullet\le i\implies \Gamma_{i,j_\bullet-1}=\{0\} \ \ {\rm{and}}\ \ \Gamma_{i,j}^0= \{(\delta_{i,j_\bullet},\cdots, 0, 1)\}$$  the result is trivially true in this case. We assume from now on that $j_\bullet> i$ (in particular $j_\bullet\ge i_\diamond$) and
let  $$\tilde\bepsilon = (\epsilon_i, \cdots, \epsilon_{j_\bullet})\ \qquad  \bepsilon = (\epsilon_i, \cdots, \epsilon_{j_\bullet}, 0, \cdots, 0,1).$$
Suppose  that $\tilde\bepsilon\in\Gamma_{i,j_\bullet-1}$.  It is obvious that $\bepsilon$ satisfies \eqref{ineq3} and \eqref{ineq2} and \eqref{ineq1a} and  for $i_\diamond\leq m < (j_\bullet-1)_\bullet$ that $\bepsilon$ satisfies \eqref{ineq1}. If  $ (j_\bullet-1)_\bullet\le m = m_\diamond\le j_\bullet-1$ then either $ m = (j_\bullet-1)_\bullet$ or $  m=j_\bullet-1$. In the first case  the first inequality in \eqref{ineq1} for $\bepsilon$ is just \eqref{ineq2} for $\tilde\bepsilon$ while the second inequality follows from \eqref{ineq3} for $\tilde\bepsilon$.
If $m=m_\diamond =j_\bullet-1$, then \eqref{ineq3} forces $\epsilon_{j_\bullet} =1$ and hence we have $\epsilon_{j_\bullet}\leq 1 \leq \epsilon_{j_\bullet} + \epsilon_{j_\bullet+1}$. This proves that \eqref{ineq1} holds for $\bepsilon$ and so $\bepsilon\in\Gamma_{i,j}^0$. 

Next we assume that  $\bepsilon\in\Gamma_{i,j}^0$ and prove that $\tilde\bepsilon\in\Gamma_{i,j_\bullet-1}$.
 To prove that \eqref{ineq3} holds for $\tilde\bepsilon$ it suffices to observe that 
if $j_\bullet=i_\diamond$ (resp. $(j_\bullet)_\bullet\ge i_\diamond$) then  \eqref{ineq1a} (resp. \eqref{ineq1})  for $\bepsilon$ gives 
$$ \sigma_{\max\{i,(j_\bullet)_\bullet+1\}, j_\bullet}(\tilde\bepsilon)=
\sigma_{\max\{i,(j_\bullet)_\bullet+1\}, j_\bullet}(\bepsilon)=1.$$
If $d_{j_\bullet-1} = 1$ then  $(j_\bullet)_\bullet = j_\bullet -1$ and so the preceding equality is $\epsilon_{j_\bullet}=1$ as needed. If $d_{j_\bullet-1} =0$ then $(j_\bullet-1)_\bullet =(j_\bullet)_\bullet$ and again the preceding equality is a reformulation of \eqref{ineq3} for $\tilde\bepsilon$.  
 The fact that $\tilde\bepsilon$ satisfies 
\eqref{ineq2}  follows by using  \eqref{ineq1a} for $\bepsilon$ if $( j_\bullet)_\bullet<i_\diamond$ and using  \eqref{ineq1} for $\bepsilon$ otherwise. It is clear that  \eqref{ineq1a} and \eqref{ineq1} hold for  $\tilde\bepsilon$ since they are the same as the corresponding ones for $\bepsilon$ and the proof of the Lemma is complete.
\end{pf}

\subsection{ The sets $\Gamma_{i,j}'$} For $i\leq j$ define a map $$p_{ij}: \Gamma_{i,j}\to \bz^{ (j-i+2)},\ \ 
p_{i,j}(\epsilon_i,\cdots,\epsilon_{j+1})=(\epsilon_i',\cdots,\epsilon_{j+1}'),$$
as follows: 
\begin{itemize}
\item $\epsilon_{j+1}'=  (1-d_j)\sigma_{\max\{i,j_\bullet+1\},j}(\bepsilon)+ d_j(1-\sigma_{\max\{i,j_\bullet+1\},j}(\bepsilon)),$\\
\item  if $i_\bullet =m_\bullet$ or  $\sigma_{\max\{i, (m_\bullet)_\bullet+1\},m_\bullet}=1$ 
  then, 
$$\epsilon_m'=\begin{cases}(d_m-1)
\epsilon_{m+1} - d_m,\ \ \ \  \sigma_{\max\{i, m_\bullet+1\},m}(\bepsilon)=0,\\  
d_m -\left(\epsilon_{m}+\epsilon_{m+1}\right) ,   \ \ \  \sigma_{\max\{i, m_\bullet+1\},m}(\bepsilon)=1,
\end{cases}$$
\item if $m_\bullet\ge i$ and $\sigma_{\max\{i, (m_\bullet)_\bullet+1,m_\bullet\}}(\bepsilon)=0$ then
$\epsilon_m'= d_m(1-\epsilon_{m+1})$.
\end{itemize}
\medskip
It is easily seen that $\epsilon_m'\in\{-1,0,1\}$ for $i\le m\le j$. For $i\le j$ let  $\Gamma_{i,j}'$ be the image of $p_{ij}$ and set $\Gamma_{i,j}'=\{0\}$ if $i>j$. 
\begin{lem}\label{gammaij'} Let $1\le i\le j\le n$.
\begin{enumerit}
\item[(i)] If $\tilde\bepsilon=(\epsilon_i,\cdots,\epsilon_j)\in\Gamma_{i,j-1}$ then $$p_{ij-1}(\tilde\bepsilon)=(\epsilon_i',\cdots,\epsilon_j')\implies p_{ij{}}(\iota_{j-1}(\tilde\bepsilon))=(\epsilon_i',\cdots,\epsilon_{j-1}', -1+\epsilon_j', 1-d_j).$$
\item[(ii)] If $\tilde\bepsilon=(\epsilon_i,\cdots,\epsilon_{j_\bullet})\in\Gamma_{i,j_\bullet-1}$ then
$$p_{ij_\bullet-1}(\tilde\bepsilon)=(\epsilon_i',\cdots,\epsilon_{j_\bullet}')\implies p_{ij{}}(\iota_{j_\bullet-1}(\tilde\bepsilon))=(\epsilon_i',\cdots,\epsilon_{j_\bullet}', 0\cdots, 0, -1, d_j).$$\end{enumerit}
\end{lem}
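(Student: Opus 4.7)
The plan is to verify both identities by a direct coordinate-by-coordinate comparison of the outputs of $p_{ij}$ with those of $p_{i,j-1}$ (respectively $p_{i,j_\bullet-1}$). The key structural observation is that the defining formula for $(p_{ij}(\bepsilon))_m$ is \emph{local}: it only involves $\epsilon_m$, $\epsilon_{m+1}$, the index $m_\bullet$, and the partial sums $\sigma_{\max\{i,m_\bullet+1\},m}(\bepsilon)$ and $\sigma_{\max\{i,(m_\bullet)_\bullet+1\},m_\bullet}(\bepsilon)$. In particular, modifying $\bepsilon$ strictly beyond coordinate $m+1$ does not alter $(p_{ij}(\bepsilon))_m$. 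This immediately disposes of all coordinates that are ``far from the right end''.

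For part (i), write $\bepsilon = \iota_{j-1}(\tilde\bepsilon) = (\tilde\bepsilon, d_j)$. By the locality principle, the $m$-th coordinate of $p_{ij}(\bepsilon)$ agrees with the $m$-th coordinate of $p_{i,j-1}(\tilde\bepsilon)$ for every $m \in [i,j-1]$. The last coordinate is also easy: since $\bepsilon \in \Gamma_{i,j}^1$ we have $\sigma_{\max\{i,j_\bullet+1\},j}(\bepsilon)=1$, so the top clause in the definition of $p_{ij}$ returns $1-d_j$. The genuine work is at the index $m=j$, which was an endpoint for $p_{i,j-1}$ (computed by the leading ``$\epsilon_{j+1}'$-formula'') but is now an interior index for $p_{ij}$ (computed by the three-case formula with $m_\bullet = j_\bullet$). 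Here I would split into two cases according to whether $d_{j-1}=0$ (so $(j-1)_\bullet = j_\bullet$, by \eqref{trivob}) or $d_{j-1}=1$ (so $j_\bullet = j-1$), and within each case enumerate the two possibilities for the pair $(\sigma_{\max\{i,j_\bullet+1\},j-1}(\bepsilon),\epsilon_j)$ forced by $\sigma_{\max\{i,j_\bullet+1\},j-1}(\bepsilon)+\epsilon_j = 1$. In each of these cases a routine substitution into the relevant formulas produces $(p_{ij}(\bepsilon))_j = -1 + (p_{i,j-1}(\tilde\bepsilon))_j$, completing part (i).

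Part (ii) follows the same template with $\bepsilon = \iota_{j_\bullet-1}(\tilde\bepsilon)$. The locality principle again handles coordinates $m \in [i,j_\bullet-1]$; the extra $\delta_{i,j_\bullet}$ added to the first coordinate causes no trouble because it is nonzero only when $i=j_\bullet$, in which case $\Gamma_{i,j_\bullet-1}=\{0\}$ and there are no such indices to check. At $m=j_\bullet$ one verifies that the appended tail $(0,\ldots,0,1)$ sets $\epsilon_{j_\bullet+1}=0$, so the interior formula for $p_{ij}$ collapses to the endpoint formula for $p_{i,j_\bullet-1}$ and the outputs agree. For $m \in [j_\bullet+1,j-1]$ we have $d_m = 0$ (no sink/source occurs strictly between $j_\bullet$ and $j$), $m_\bullet = j_\bullet$, and every $\epsilon_\ell$ with $\ell \in [j_\bullet+1,j]$ is zero; the applicable clause of the formula then gives $0$. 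At $m=j$, with $\sigma_{\max\{i,j_\bullet+1\},j}(\bepsilon)=0$ and $\epsilon_{j+1}=1$, the formula $(d_j-1)\epsilon_{j+1}-d_j$ evaluates to $-1$. Finally the last coordinate equals $d_j$ because $\bepsilon \in \Gamma_{i,j}^0$.

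The only real obstacle is bookkeeping: one must identify which of the three sub-cases in the definition of $p_{ij}$ is active at each relevant index, and verify that the transition from the endpoint formula to the interior formula produces precisely the claimed shift of $-1$ at index $j$. I do not expect any conceptual difficulty beyond careful enumeration, and the recursive structure of $\iota_{j-1}$ and $\iota_{j_\bullet-1}$ established in Lemma \ref{gammaij} guarantees that all intermediate partial sums behave as required.
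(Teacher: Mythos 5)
Your approach is essentially the paper's: use the locality of the formula defining $(p_{ij}(\bepsilon))_m$ to dispose of coordinates $m<j$ (resp.\ $m<j_\bullet$), handle the last coordinate from the membership $\bepsilon\in\Gamma_{i,j}^1$ (resp.\ $\Gamma_{i,j}^0$), and do a case analysis at the transition index. Two cautions on the bookkeeping. In part (i), your proposed split by $d_{j-1}$ and the value of $\sigma_{\max\{i,j_\bullet+1\},j-1}(\bepsilon)$ does not by itself determine which of the three defining clauses of $p_{ij}$ applies at $m=j$; within each of your four sub-cases you would still need to track whether $i_\bullet=j_\bullet$ or whether $\sigma_{\max\{i,(j_\bullet)_\bullet+1\},j_\bullet}(\bepsilon)$ is $0$ or $1$, so it is cleaner to split directly on those conditions as the paper does. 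More importantly, in part (ii) the claim that the interior formula at $m=j_\bullet$ ``collapses'' to the endpoint formula for $p_{i,j_\bullet-1}$ once $\epsilon_{j_\bullet+1}=0$ is not immediate: this is the crux of that half of the lemma, and verifying the agreement (the paper's \eqref{e:ibul}) requires invoking the constraints \eqref{ineq1}--\eqref{ineq1a} together with the dichotomy $(j_\bullet-1)_\bullet=(j_\bullet)_\bullet$ versus $(j_\bullet)_\bullet=j_\bullet-1$ given by $d_{j_\bullet-1}$. Likewise the degenerate cases $j_\bullet<i$ and $j_\bullet=i$ each need a short direct check of $\epsilon_{j_\bullet}''$ rather than the dismissal ``no indices to check,'' since $\iota_{j_\bullet-1}$ alters the first coordinate by $\delta_{i,j_\bullet}$. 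None of this is a conceptual gap, but it is more than routine substitution.
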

\begin{pf}
Let $\bepsilon = (\epsilon_i,\cdots, \epsilon_j, d_j)=  \iota_{i,j-1}(\tilde\bepsilon)$ and let  $p_{ij}(\bepsilon)= (\epsilon_i'',\cdots,\epsilon_{j+1}'') $.  Since $$\sigma_{m,r}(\tilde\bepsilon)=\sigma_{m,r}(\bepsilon),\ \ m\le r\le j$$  it is clear from the definition that $\epsilon_m'=\epsilon_m''$ if $m\le j-1$.  

By Lemma \ref{gammaij} we have $\bepsilon\in\Gamma_{i,j}^1$ and hence $\sigma_{\max\{i,j_\bullet+1\}, j}(\bepsilon)=1$. It is immediate from the definition of $p_{ij}$ that $\epsilon_{j+1}''=1-d_j$. We now prove that $\epsilon_{j}''= -1+\epsilon_j'$; using the definition of $\epsilon_j'$ this is equivalent to proving\begin{eqnarray}\notag&
\epsilon_j''&=-1+ (1-d_{j-1})\sigma_{\max\{i, (j-1)_\bullet+1\},j-1}(\tilde\bepsilon)+d_{j-1}(1-\sigma_{\max\{i,(j-1)_\bullet+1,\},j-1}(\tilde\bepsilon)),\\ \label{rewrite}
&&=-1 + (1-d_{j-1})\sigma_{\max\{i,  j_\bullet+1\},j-1}(\bepsilon) +d_{j-1}(1-\sigma_{\max\{i,(j_\bullet)_\bullet+1\},j_\bullet}(\bepsilon)).\end{eqnarray}
If $j_\bullet\ge i$ and $\sigma_{ \max\{i,(j_\bullet)_\bullet+1\}, j_\bullet}(\bepsilon)=0$ then by \eqref{ineq1} we have $\epsilon_{j_\bullet+1}=1$ and so $\sigma_{j_\bullet+1, j-1}(\bepsilon)=1$. This means that the right hand side of \eqref{rewrite} is zero. Since  by   definition $\epsilon_j''=d_j(1-d_j)=0$ the result is proved in this case.
\medskip

\noindent If $i_\bullet=j_\bullet$ then $d_{j-1}=0$ and since  $\sigma_{\max\{i, j_\bullet+1\},j}(\bepsilon)=1$ it follows that the right hand side of \eqref{rewrite} is $-\epsilon_j'$ which is precisely the value of  $\epsilon_j''$ in this case.
\medskip

\noindent Suppose that $\sigma_{\max\{i, (j_\bullet)_\bullet+1\},j_\bullet}(\bepsilon)=1$ and that $j_\bullet\geq i$. This means that the second term on the right hand side of \eqref{rewrite} is zero. Since $\sigma_{\max\{i, j_\bullet+1\},j}(\bepsilon)=1$ by definition we have $\epsilon_j''=-\epsilon_j$. Recalling that $ \epsilon_j =1+(d_{j-1}-1)\sigma_{\max\{i, (j-1)_\bullet+1\},j-1}$ we see that the right hand side of \eqref{rewrite}  is also $-\epsilon_j'$.
The proof of part (i) is now complete.

\medskip

We prove part (ii).  Let $$\bepsilon=  \iota_{i,j_\bullet-1}(\tilde\bepsilon)\in\Gamma_{i,j}^0,\ \ p_{ij}(\bepsilon)= (\epsilon_i'',\cdots,\epsilon_{j+1}'').$$ Since $\sigma_{m,r}(\tilde\bepsilon)=\sigma_{m,r}(\bepsilon)$ for all $ m\le r\le j_\bullet-1$  it is clear from the definition that $\epsilon_m'=\epsilon_m''$ if $m\le j_\bullet-1$. Since $d_m=0$ if $j_\bullet+1\le m\le j-1$ a simple inspection also shows that $$  \epsilon_m''=0,\ \ j_\bullet+1\le m\le j-1,\ \ \epsilon_{j+1}''= d_j.$$ 
It remains to prove that $\epsilon_{j_\bullet}''=\epsilon_{j_\bullet}'$ and that $\epsilon_j''=-1$.

\medskip
\noindent If $j_\bullet=i_\bullet$ then $\tilde\bepsilon=\{0\}$,  $\Gamma_{i,j_\bullet-1}=\{0\}$  and $\bepsilon=(0,\cdots, 0,1)$.  By definition  $p_{i,j}(\bepsilon) = (0,\cdots, 0,-1,d_j)$ and we are done in this case.  If $i=j_\bullet$ then $\Gamma_{i,j_\bullet-1}=\{0\}$ and $\bepsilon = (1,0,\cdots, 1)$.  and one  checks easily  that $\epsilon_i'= 0$. On the other hand, by definition $\epsilon_i'' = d_i - (\delta_{i,j_\bullet} + \epsilon_{i+1}) = 0$. The fact that $\epsilon_j'' = -1$ is a straightforward checking from the definition. 

\medskip
 
\noindent Suppose that $j_\bullet> i$.  Since 
 $\epsilon_{j_\bullet+1} =0$ we have $\sigma_{\max\{i,j_\bullet+1\}, j}(\bepsilon) =0$ and by using \eqref{ineq1} that  $\sigma_{\max\{i,(j_\bullet)_\bullet+1\}, j_\bullet}(\bepsilon) =1$. Since $\epsilon_{j+1}=1$ it  follows  by definition that   $\epsilon_j''= -1$ as needed.
 \medskip
 
 \noindent 
Finally to show  $\epsilon_{j_\bullet}''=\epsilon_{j_\bullet}'$,  we  write $m=j_\bullet$ and see that we must prove  \begin{eqnarray}\notag &\epsilon_{m}''&= (1-d_{m-1})\sigma_{\max\{i,(m-1)_\bullet+1\},m-1}(\tilde\bepsilon) + d_{m-1}(1- \sigma_{\max\{i,(m-1)_\bullet+1\},m-1}(\tilde\bepsilon))\\
&&= (1-d_{m-1})\sigma_{\max\{i,m_\bullet+1\},m-1}(\bepsilon) + d_{m-1}(1- \sigma_{\max\{i,(m_\bullet)_\bullet+1\}, m_\bullet}(\bepsilon)),\notag\\  &&=(1-d_{m-1})(1-\epsilon_m)+d_{m-1}(1- \sigma_{\max\{i,(m_\bullet)_\bullet+1\}, m_\bullet}(\bepsilon)).\label{e:ibul}
\end{eqnarray}
 If  $\sigma_{\max\{i,(m_\bullet)_\bullet+1\}, m_\bullet}(\bepsilon))=1$ then   $\epsilon_m''=1-\epsilon_m$ by definition. By \eqref{ineq1} we have  $\epsilon_m=1$ if $d_{m-1}=1$ and hence  $(1-\epsilon_m)=(1-d_{m-1})(1-\epsilon_m)$ and \eqref{e:ibul} is proved.
   If  $\sigma_{\max\{i,(m_\bullet)_\bullet+1\}, m_\bullet}(\bepsilon)=0$ then by definition $\epsilon_m''=1$. Hence we must prove that $$1=(1-d_{m-1})(1-\epsilon_m)+d_{m-1}.$$ If $d_{m-1}=1$ this is clear from the preceding computation. If $d_{m-1}=0$ then $m_\bullet+1<m$ and \eqref{ineq1} forces $\epsilon_{m_\bullet+1}=1$; in particular it follows that $\epsilon_m=0$ and \eqref{e:ibul}  and  is completely proved.
\end{pf}

\subsection{}

\begin{prop}\label{closedform} For  $1\le i\le j\le n$ we have  $$x[\alpha_{i,j}] =\sum_{\bepsilons\in \Gamma_{i,j}}f_{i,j}^\bepsilons m_{i,j}^\bepsilons,$$
 where $$m_{i,j}^{\bepsilons} = x_{i-1}^{1-\epsilon_i}x_i^{\epsilon_i'}\cdots x_{j}^{\epsilon_j'}x_{j+1}^{\epsilon_{j+1}'},\ \ f_{i,j}^\bepsilons= f_i^{\epsilon_i}\cdots f_j^{\epsilon_j}f_{j+1}^{(1-d_j)\epsilon_{j+1}},$$ 
with $\bepsilon = (\epsilon_i,\cdots, \epsilon_{j+1})$ and $p_{i,j}(\bepsilon) = (\epsilon_i', \cdots, \epsilon_{j+1}')$.
\end{prop}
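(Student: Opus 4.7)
The plan is to induct on $j-i$, with the recursion of Proposition~\ref{clusterind} doing the heavy lifting and the combinatorial content of Lemmas~\ref{gammaij} and \ref{gammaij'} matching up the two sides term-by-term.

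For the base case $j=i$ the set $\Gamma_{i,i}$ consists of pairs $(\epsilon_i,\epsilon_{i+1})$ constrained by \eqref{ineq3}--\eqref{ineq1a}, and one checks directly that the resulting sum matches the first equality of Proposition~\ref{clusterind} (solved for $x[\alpha_i]$). For the inductive step, fix $j>i$ and assume the formula is known for all pairs $(i',j')$ with $j'-i'<j-i$; apply the second displayed equation of Proposition~\ref{clusterind} to isolate $x[\alpha_{i,j}]$ as
\[
x[\alpha_{i,j}] \;=\; \tfrac{1}{x_j}\Bigl(f_j^{d_{j-1}}\,x[\alpha_{i,j-1}]\,x_{j+1}^{1-d_j} + f_{j+1}^{1-d_j}x_{j+1}^{d_j}\bigl(\cdots\bigr)\Bigr),
\]
and substitute the inductive hypothesis for $x[\alpha_{i,j-1}]$ (and, in the generic sub-case, for $x[\alpha_{i,j_\bullet-1}]$).

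The decomposition $\Gamma_{i,j}=\Gamma_{i,j}^{1}\sqcup\Gamma_{i,j}^{0}$ together with the bijections $\iota_{j-1}\colon\Gamma_{i,j-1}\to\Gamma_{i,j}^{1}$ and $\iota_{j_\bullet-1}\colon\Gamma_{i,j_\bullet-1}\to\Gamma_{i,j}^{0}$ of Lemma~\ref{gammaij} suggests that the first summand on the right hand side produces precisely the sum over $\Gamma_{i,j}^{1}$, while the second produces the sum over $\Gamma_{i,j}^{0}$. I would verify this for each summand separately. For a fixed $\tilde\bepsilon\in\Gamma_{i,j-1}$ with image $\bepsilon=\iota_{j-1}(\tilde\bepsilon)$, the $f$-part is immediate: the exponent of $f_j$ picks up $d_{j-1}$ from the recursion and $\epsilon_j=d_{j-1}+(\text{old }\epsilon_j\text{ term})$, and $f_{j+1}$ acquires exactly $(1-d_j)\epsilon_{j+1}$. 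For the $x$-part, Lemma~\ref{gammaij'}(i) is tailor-made: the exponent of $x_j$ changes from $\epsilon_j'$ (in $m_{i,j-1}^{\tilde\bepsilon}$) to $-1+\epsilon_j'$ (matching the division by $x_j$), the exponent of $x_{j+1}$ becomes $1-d_j$, and all other exponents are unchanged. The analogous check using Lemma~\ref{gammaij'}(ii) handles the second summand, with the special sub-cases $j_\bullet\le i$ (where the second summand degenerates to $f_i^{\delta_{i,j_\bullet}}x_{i-1}^{1-\delta_{i,j_\bullet}}$) corresponding to the unique element of $\Gamma_{i,j}^{0}$ when $\Gamma_{i,j_\bullet-1}=\{0\}$.

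The main obstacle is not a single hard inequality but rather the case-bookkeeping: both $p_{ij}$ and the recursion split according to whether $d_{j-1}$, $d_j$, and the relative position of $i$, $j_\bullet$ take various values, and one must confirm that the three-case definition of $\epsilon_m'$ in $p_{ij}$ is aligned with the recursion in each of these sub-cases. Fortunately, this matching has essentially already been done in Lemma~\ref{gammaij'}, so the inductive step reduces to (i) checking the $f$-exponents, (ii) invoking Lemma~\ref{gammaij'}, and (iii) handling by hand the degenerate base sub-case where $\Gamma_{i,j_\bullet-1}=\{0\}$. Injectivity of the parametrisation and the fact that the two pieces $\Gamma_{i,j}^{1},\Gamma_{i,j}^{0}$ are disjoint guarantee no terms are double-counted, completing the induction.
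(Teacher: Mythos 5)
Your proposal matches the paper's proof essentially step for step: induction on $j-i$, base case from the first relation in Proposition~\ref{clusterind}, inductive step via the second relation together with the decomposition $\Gamma_{i,j}=\Gamma_{i,j}^0\sqcup\Gamma_{i,j}^1$ and the bijections and exponent-tracking of Lemmas~\ref{gammaij} and~\ref{gammaij'}, including the degenerate sub-case $j_\bullet\le i$. You have correctly identified all the load-bearing ingredients; what remains is only the routine $f$-exponent bookkeeping (e.g.\ noting $(1-d_{j-1})\epsilon_j+d_{j-1}=\epsilon_j$), which the paper also treats briefly.
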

\begin{pf}
The proof of the proposition proceeds by an induction on $j-i$. To see that induction begins recall from Proposition \ref{clusterind} that
$$x[\alpha_i]=\delta_{i,i_\diamond}\left( f_ix_i^{-1} + x_{i-1}x_{i+ 1}x_i^{-1}\right)+ (1-\delta_{i,i_\diamond})\left(x_{i}^{-1}\left(f_ix_{i+1} + x_{i-1}f_{i+1}\right)\right).$$
Since $$i=i_\diamond \implies \Gamma_{i,i}=\{(1,1), (0,1)\},\ \ \Gamma_{i,i}'=\{(-1,0),(-1,1)\}$$ and $$i\ne i_\diamond\implies \Gamma_{i,i}=\{(0,1), (1,0)\},\ \ \Gamma_{i,i}'=\{(-1,0), (-1,1)\}$$
we see that induction begins.
For the inductive step Proposition \ref{clusterind} asserts that
\begin{eqnarray*}\label{prop14}&x_jx[\alpha_{i,j}]& = f_j^{d_{j-1}}x[\alpha_{i,j-1}]x_{j+1}^{1-d_j} +\\&& f_{j+1}^{1-d_j}x_{j+1}^{d_j} \left((\delta_{i_\bullet,j_\bullet}+\delta_{i,j_\bullet})f_i^{\delta_{i,j_\bullet}}x_{i-1}^{1-\delta_{i,j_\bullet}}+(1-\delta_{i_\bullet,j_\bullet}-\delta_{i,j_\bullet}) f_{j_\bullet}^{d_{j_\bullet-1}} x[\alpha_{i,j_\bullet-1}]\right).
\end{eqnarray*}
Let $\tilde\bepsilon = (\epsilon_i,\cdots, \epsilon_j)\in \Gamma_{i,j-1}$. By Lemmas \ref{gammaij} and \ref{gammaij'} we have 
$$m_{i,j}^{\bepsilons} f_{i,j}^{\bepsilons} = m_{i,j-1}^{\tilde\bepsilons} f_{i,j}^{\tilde\bepsilons} f_{j}^{d_{j-1}}x_{j+1}^{1-d_j}x_j^{-1}, \quad \bepsilon = \iota_{i,j-1}(\tilde\bepsilon)$$
once we notice that $(1-d_{j-1})\epsilon_j +d_{j-1} = \epsilon_j$.  Hence using the inductive hypothesis we get $$f_j^{d_{j-1}}x[\alpha_{i,j-1}]x_{j+1}^{1-d_j}x_j^{-1}=\sum_{\bepsilon\in\Gamma_{i,j}^1}f_{i,j}^\bepsilons m_{i,j}^\bepsilons.$$
Similarly let $\tilde\bepsilon = (\epsilon_i,\cdots, \epsilon_{\max\{i,j_\bullet+1\}})\in \Gamma_{i,j_\bullet-1}$ and $\bepsilon = \iota_{i,j_\bullet-1}(\tilde\bepsilon)$.  Then $\epsilon_{j+1} = 1-d_j$ by definition and   $1-\epsilon_i = 1- \delta_{i,j_\bullet}$  if $j_\bullet\leq i$ and we get
$$m_{i,j}^{\bepsilons} f_{i,j}^{\bepsilons}= \begin{cases} f_{j+1}^{1-d_j}f_i^{\delta_{i,j_\bullet}}x_{i-1}^{1-\delta_{i,j_\bullet}}x_{j}^{-1}x_{j+1}^{d_j}, \quad \quad j_\bullet\leq i\\f_{i,j_\bullet-1}^{\tilde\bepsilons}f_{j+1}^{1-d_j}f_{j_\bullet}^{d_\bullet-1} m_{i,j_\bullet-1}^{\tilde\bepsilons}x_j^{-1}x_{j+1}^{d_j}, \quad\quad i<j_\bullet.
\end{cases}$$
The inductive step follows from the inductive hypothesis and  the fact that $\Gamma_{i,j} = \Gamma_{i,j}^0\sqcup \Gamma_{i,j}^1$.
\end{pf}
  
\section{Irreducible tensor products.}\label{irrtp} In
this section we  give a necessary condition (see Section \ref{irred})  for the equality $[\bpi_1][\bpi_2]=[\bpi_1\bpi_2]$ to hold when $\bpi_1,\bpi_2\in\bp\bor_\xi$.  We shall see in later sections that the conditions are sufficient as well. 
We shall often  need to  work in the monoidal  category  $\cal F_\xi$ rather than its Grothendieck ring;  by   abuse of notation we shall use the symbol $[\bomega]$ to also denote an  irreducible module in  $\cal F_\xi$ with label $\bomega$. To emphasize that we are working in the category  we shall write $[\bomega]\otimes [\bomega']$ for the tensor product of the corresponding objects.

\subsection{}\label{factsonweyls}  We collect some well--known facts on the category $\cal F_\xi$. An object of $\cal F_\xi$ is said to be {\em $\ell$-highest weight with highest weight $\bomega$} if it has $[\bomega]$ as its  unique irreducible quotient. Clearly any quotient of an $\ell$--highest weight module is also  $\ell$--highest weight with the same irreducible quotient.  Given $\bomega_1,\bomega_2\in\cal P_\xi^+$ the module $[\bomega_1\bomega_2]$ occurs in the Jordan--Holder series of $[\bomega_1]\otimes [\bomega_2]$  with multiplicity one. In particular if  $[\bomega_1]\otimes [\bomega_2]$ is an $\ell$--highest weight module then $[\bomega_1\bomega_2]$ is its unique irreducible quotient and hence  $[\bomega_1]\otimes [\bomega_2]$ is irreducible iff $[\bomega_1]\otimes [\bomega_2]\cong[\bomega_2]\otimes[\bomega_1]\cong[\bomega_1\bomega_2]$.

The following results from  \cite{H5}, \cite{H6} play an important role in this section.
\begin{thm}\label{pairs}  Let $\bomega_s\in\cal P^+_\xi$ for $1\le s\le r$. Then $[\bomega_1]\otimes\cdots\otimes [\bomega_s]$ is $\ell$--highest weight if   every pair $[\bomega_s]\otimes [\bomega_p]$ with $1\le s< p\le r$ 
is $\ell$--highest weight. Moreover if $[\bomega_s]\otimes [\bomega_p]\cong[\bomega_s\bomega_p]$ for all $1\le s<p\le r$ then $$[\bomega_1]\otimes\cdots\otimes[\bomega_r]\cong[\bomega_1\cdots\bomega_r].$$\hfill\qedsymbol
\end{thm}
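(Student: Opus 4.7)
My approach would be induction on $r$, with $r=2$ being tautological. For the first assertion, assuming the result for $r-1$ factors, set $V = [\bomega_1] \otimes \cdots \otimes [\bomega_r]$ and $V_{\ge 2} = [\bomega_2] \otimes \cdots \otimes [\bomega_r]$, and let $v_s$ denote an $\ell$-highest weight vector of $[\bomega_s]$. The inductive hypothesis gives that $V_{\ge 2}$ is generated by $v_{\ge 2} := v_2 \otimes \cdots \otimes v_r$; the goal is to show that the submodule $N \subseteq V$ generated by $v_1 \otimes v_{\ge 2}$ equals $V$. My plan would be to use pairwise cyclicity of $[\bomega_1] \otimes [\bomega_s]$ for each $s \ge 2$, together with the comultiplication formula for the negative Drinfeld currents of $\widehat\bu_q$, to show that $N$ contains every slice $u_1 \otimes v_{\ge 2}$ as $u_1$ ranges over $[\bomega_1]$; once this is in hand, the inductive hypothesis on $V_{\ge 2}$ propagates to give $N = V$.

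For the second assertion, the key observations are that irreducibility of $[\bomega_s] \otimes [\bomega_p]$ is symmetric in $s,p$ and behaves well under duality (the dual of a tensor product reverses factor order, and the dual of an irreducible is irreducible). Hence the hypotheses of the first assertion apply both to $V$ and to $V^* \cong [\bomega_r]^* \otimes \cdots \otimes [\bomega_1]^*$. The first assertion then gives that $V$ is $\ell$-highest weight and $V^*$ is $\ell$-highest weight, so $V$ has simple head and simple socle, both equal to $[\bomega_1 \cdots \bomega_r]$. Since this constituent appears with multiplicity one in the Jordan--H\"older series of $V$ (iterating the fact recalled in \secref{factsonweyls}), these constraints force $V \cong [\bomega_1 \cdots \bomega_r]$.

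The main obstacle is the cyclicity step in the first assertion: the non-cocommutativity of the comultiplication means that acting by negative Drinfeld currents on $v_1 \otimes v_{\ge 2}$ mixes the tensor factors in an intricate way, and it is delicate to verify from pairwise cyclicity alone that the resulting terms suffice to span every slice $u_1 \otimes v_{\ge 2}$. This is precisely where Hernandez's technical machinery on normalized $R$-matrices and the analysis of their poles (from \cite{H5,H6}) plays the crucial role in assembling pairwise cyclicity of adjacent transpositions into full $r$-fold cyclicity.
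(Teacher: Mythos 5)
The paper does not prove this statement: Theorem \ref{pairs} is quoted, with a \qedsymbol{} and no argument, as a result of Hernandez (\cite{H5}, \cite{H6}), so there is no in-paper proof to measure your attempt against. Judged on its own terms, your proposal splits into two parts of very different status. The reduction of the second assertion to the first is correct and is the standard argument (the one the paper itself invokes in Section \ref{admis} following \cite{CPminaff}): irreducibility of each pair is order- and duality-stable, so both $V$ and $V^*$ are $\ell$-highest weight, giving simple head and simple socle isomorphic to $[\bomega_1\cdots\bomega_r]$, which occurs with multiplicity one; this forces $V$ simple. That part is fine.

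The gap is in the first assertion, and it is not a small one: the step ``use pairwise cyclicity of $[\bomega_1]\otimes[\bomega_s]$ together with the comultiplication of the negative currents to show $N$ contains every slice $u_1\otimes v_{\ge 2}$'' is not a proof strategy but a restatement of the theorem. The coproduct of $\widehat\bu_q$ on Drinfeld generators is not triangular in any way that lets you isolate the first tensor factor, and pairwise cyclicity of $[\bomega_1]\otimes[\bomega_s]$ for each $s$ separately gives you no direct control over vectors of the form $u_1\otimes v_2\otimes\cdots\otimes v_r$ inside the $r$-fold product; this is exactly why the statement was nontrivial and why Hernandez's proof goes through the meromorphic structure and poles of normalized $R$-matrices rather than any slice-by-slice induction. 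You acknowledge this yourself in your final paragraph, which means your proposal ultimately defers the entire content of assertion (1) to \cite{H5}, \cite{H6} --- the same thing the paper does. That is an acceptable way to \emph{use} the theorem, but it is not a proof of it; if the goal was to supply one, the $R$-matrix analysis would have to be carried out rather than named.
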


\subsection{} 
Given $\bomega_{i,a},\bomega_{j,b}\in\cal P_\xi^+$ it was shown in \cite{CBraid} that the module $[\bomega_{i,a}]\otimes[\bomega_{j,b}]$ is $\ell$--highest weight (resp. irreducible) if 
$$(b-a)\notin\{2p+2-i-j: \max\{i,j\}<p+1\le\min\{n+1,i+j\},$$
$${\rm{(resp.}} \ \  \pm(b-a)\notin\{2p+2-i-j: \max\{i,j\}<p+1\le\min\{n+1,i+j\}).$$ The next proposition  is a simple calculation using the preceding criterion and the fact that $|\xi(j)-\xi(i)|\le |j-i|$ for all $i,j\in[1,n]$.
\begin{prop}\label{braid} Let  $\bomega_{i,a},\bomega_{j,b}\in\cal P^+_\xi$. Then $[\bomega_{i,a}]\otimes[\bomega_{j,b}]$ (resp. $[\bomega_{j,b}]\otimes[\bomega_{i,a}]$) is $\ell$--highest weight if   $a=\xi(i)+1$ (resp. $a=\xi(i)-1$). Moreover,
$$\bomega_{i,a}\bomega_{j,b}\notin\bp\bor_\xi\cup\{\bof_i\}\implies[\bomega_{i,a}]\otimes[\bomega_{j,b}]\cong[\bomega_{i,a}\bomega_{j,b}]. $$ \hfill\qedsymbol
\end{prop}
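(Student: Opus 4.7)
The plan is to reduce the proposition to the criterion from \cite{CBraid} recalled in the paragraph above the statement, together with the basic height-function bound $|\xi(i)-\xi(j)|\le|i-j|$, which follows immediately from $|\xi(k)-\xi(k-1)|=1$ by induction. Set
$$S_{i,j}=\{2p+2-i-j:\ \max\{i,j\}<p+1\le\min\{n+1,i+j\}\},$$
so that the $\ell$-highest weight criterion asks $(b-a)\notin S_{i,j}$ and the irreducibility criterion asks $\pm(b-a)\notin S_{i,j}$, i.e.\ $|b-a|\notin S_{i,j}$. A key observation is that every element of $S_{i,j}$ is positive and at least $|i-j|+2$ (with minimum $2$ when $i=j$).

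For the first claim, suppose $a=\xi(i)+1$. Since $b=\xi(j)\pm 1$, we have
$$b-a=\xi(j)-\xi(i)+(\pm 1)-1\le\xi(j)-\xi(i)\le |i-j|<|i-j|+2,$$
so $b-a\notin S_{i,j}$ and $[\bomega_{i,a}]\otimes[\bomega_{j,b}]$ is $\ell$-highest weight. The parenthetical case $a=\xi(i)-1$ is identical after interchanging the roles of the two factors, using that $S_{j,i}=S_{i,j}$ and that the criterion for $[\bomega_{j,b}]\otimes[\bomega_{i,a}]$ reads $(a-b)\notin S_{i,j}$.

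For the irreducibility statement I will argue the contrapositive: if $[\bomega_{i,a}]\otimes[\bomega_{j,b}]$ is reducible then $\bomega_{i,a}\bomega_{j,b}\in\bp\bor_\xi\cup\{\bof_k:1\le k\le n\}$. Since each of $a,b$ lies in $\{\xi(i)\pm 1\}$, $\{\xi(j)\pm 1\}$ respectively,
$$|b-a|\le|\xi(j)-\xi(i)|+2\le|i-j|+2.$$
Combined with $\min S_{i,j}\ge|i-j|+2$ (or $2$ when $i=j$), reducibility forces equality $|b-a|=|i-j|+2$ (respectively $|b-a|=2$ when $i=j$).

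The case $i=j$ gives $\{a,b\}=\{\xi(i)+1,\xi(i)-1\}$, so $\bomega_{i,a}\bomega_{j,b}=\bof_i$. When $i\neq j$ (say $i<j$) equality $|b-a|=j-i+2$ forces $|\xi(j)-\xi(i)|=j-i$, i.e.\ $\xi$ is strictly monotonic on $[i,j]$, together with the shifts $\pm 1$ chosen in the direction that enlarges $|b-a|$. Because $\xi$ is monotonic on $[i,j]$, the set $\{p\in(i,j):\xi(p-1)=\xi(p+1)\}$ is empty, so by definition $\bomega_\xi(i,j)=\bomega_{i,a_1}\bomega_{j,a_2}$. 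A direct check of the sign conventions in the definition of $\bomega_\xi(i,j)$ then matches $(a_1,a_2)$ with $(a,b)$ in both monotonic subcases, giving $\bomega_{i,a}\bomega_{j,b}=\bomega_\xi(i,j)\in\bp\bor_\xi$. The only mild obstacle is this last bookkeeping of signs versus the direction of monotonicity; everything else is the single bound on $|b-a|$ applied to the list in $S_{i,j}$.
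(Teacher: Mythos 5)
Your proof is correct and is essentially the argument the paper has in mind: the authors state Proposition~\ref{braid} with only the remark that it is ``a simple calculation using the preceding criterion and the fact that $|\xi(j)-\xi(i)|\le |j-i|$,'' and you have carried out exactly that calculation --- bounding $b-a$ by $|i-j|$ for the $\ell$-highest-weight statement, bounding $|b-a|$ by $|\xi(j)-\xi(i)|+2\le|i-j|+2=\min S_{i,j}$ for the irreducibility statement, and then checking that the boundary case forces $\{a,b\}=\{\xi(i)\pm 1\}$ when $i=j$ and $\bomega_{i,a}\bomega_{j,b}=\bomega(i,j)$ when $i\ne j$ (strict monotonicity of $\xi$ on $[i,j]$ kills all intermediate turning points, and the sign conventions in the definition of $\bomega(i,j)$ match the extremal choice of $a,b$). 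No gaps.
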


\subsection{}\label{admis} Let $\xi^*$ be the height function defined by $\xi^*(i)=\xi(n+1-i)$. The assignment $$\bomega_{i,\xi(i)\pm 1}\to\bomega_{n+1-i, \xi^*(n+1-i)\pm 1}$$ extends to  an   isomorphism  $\cal P^+_\xi\cong \cal P^+_{\xi^*}$,  and if  $\bomega=\bomega_{i_1,a_1}\cdots\bomega_{i_k,a_k}\in\cal P_\xi^+$ we set   $$\bomega^*= \bomega_{n+1-i_1,a_1}\cdots\bomega_{n+1-i_k,a_k}\in\cal P^+_{\xi^*}.$$  It  was  proved  in \cite{CPminaff} that  if $[\bomega_1]
\otimes [\bomega_2]$  and  $[\bomega_2^*]\otimes [\bomega_1^*]$ are both $\ell$--highest  weight then they are both irreducible with  the converse being   trivially true. 
\medskip

Say that an ordered  triple of elements $(\bomega_1,\bomega_2,\bomega_3)$ from $\cal P^+_\xi$ is $\xi$-admissible if: \begin{itemize}
\item $[\bomega_s]\otimes [\bomega_3]$ is  irreducible for $s=1,2$,
\item either $[\bomega_1]\otimes[\bomega_2]$
  or $[\bomega_2]\otimes[\bomega_1]$
is  $\ell$--highest weight 
\end{itemize}
\begin{lem}
If $(\bomega_1,\bomega_2,\bomega_3)$
is  $\xi$-admissible  and  $(\bomega_1^*,\bomega_2^*,\bomega_3^*)$ is $\xi^*$-admissible  then $[\bomega_1\bomega_2]\otimes[\bomega_3]\cong[\bomega_1\bomega_2\bomega_3].$
\end{lem}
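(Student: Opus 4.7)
The plan is to prove that $[\bomega_1\bomega_2]\otimes[\bomega_3]$ is irreducible; from this the displayed isomorphism follows at once. Indeed, by Section~\ref{factsonweyls} this tensor product admits $[\bomega_1\bomega_2\bomega_3]$ as its unique irreducible quotient (and $\bomega_1\bomega_2\bomega_3$ occurs with multiplicity one in its Jordan--H\"older series), so once irreducibility is established the two modules must coincide. To obtain irreducibility I will invoke the criterion from \cite{CPminaff} recalled in Section~\ref{admis}: it suffices to show both that $[\bomega_1\bomega_2]\otimes[\bomega_3]$ is $\ell$--highest weight in $\cal F_\xi$ and that $[\bomega_3^*]\otimes[(\bomega_1\bomega_2)^*]$ is $\ell$--highest weight in $\cal F_{\xi^*}$.

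For the first, use the $\xi$--admissibility of $(\bomega_1,\bomega_2,\bomega_3)$. Because $\cal P^+_\xi$ is abelian, $\bomega_1\bomega_2=\bomega_2\bomega_1$, so after possibly swapping $\bomega_1$ and $\bomega_2$ I may assume that $[\bomega_1]\otimes[\bomega_2]$ is $\ell$--highest weight. The pairs $[\bomega_1]\otimes[\bomega_3]$ and $[\bomega_2]\otimes[\bomega_3]$ are irreducible, hence $\ell$--highest weight, so \thmref{pairs} applied to the ordered triple $(\bomega_1,\bomega_2,\bomega_3)$ shows that $[\bomega_1]\otimes[\bomega_2]\otimes[\bomega_3]$ is $\ell$--highest weight with highest weight $\bomega_1\bomega_2\bomega_3$. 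Tensoring the canonical surjection $[\bomega_1]\otimes[\bomega_2]\twoheadrightarrow[\bomega_1\bomega_2]$ on the right by the identity of $[\bomega_3]$ produces a surjection onto $[\bomega_1\bomega_2]\otimes[\bomega_3]$, which is therefore $\ell$--highest weight.

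Exactly the same strategy, carried out in $\cal F_{\xi^*}$, produces the second needed statement. The $\xi^*$--admissibility supplies an ordering $(\bomega_a^*,\bomega_b^*)$ of $\{\bomega_1^*,\bomega_2^*\}$ for which $[\bomega_a^*]\otimes[\bomega_b^*]$ is $\ell$--highest weight, and both $[\bomega_3^*]\otimes[\bomega_s^*]$ for $s=1,2$ are irreducible. Applying \thmref{pairs} to the ordered triple $(\bomega_3^*,\bomega_a^*,\bomega_b^*)$ shows that the triple tensor product is $\ell$--highest weight, and collapsing the last two factors via $[\bomega_a^*]\otimes[\bomega_b^*]\twoheadrightarrow[\bomega_a^*\bomega_b^*]=[(\bomega_1\bomega_2)^*]$ gives the desired $\ell$--highest weight quotient.

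Feeding both conclusions into the criterion of Section~\ref{admis} yields the irreducibility of $[\bomega_1\bomega_2]\otimes[\bomega_3]$, completing the proof. The only bookkeeping requiring any care is choosing the correct ordering of $\bomega_1,\bomega_2$ in $\cal F_\xi$ and of $\bomega_1^*,\bomega_2^*$ in $\cal F_{\xi^*}$ before invoking \thmref{pairs}, but the commutativity of the monoid together with the admissibility hypothesis ensures that in each category one such ordering is always available, so this is not a genuine obstacle.
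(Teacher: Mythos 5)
Your proposal is correct and follows essentially the same approach as the paper: establish that $[\bomega_1\bomega_2]\otimes[\bomega_3]$ and $[\bomega_3^*]\otimes[(\bomega_1\bomega_2)^*]$ are $\ell$--highest weight via Theorem~\ref{pairs} and then invoke the \cite{CPminaff} criterion recalled in Section~\ref{admis}. Your version is slightly leaner (deriving only the two quotients the criterion requires rather than all four) and is appropriately careful about the fact that the $\ell$--highest weight ordering of $\bomega_1,\bomega_2$ in $\cal F_\xi$ need not match that of $\bomega_1^*,\bomega_2^*$ in $\cal F_{\xi^*}$, a point the paper leaves implicit.
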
 
\begin{pf} Suppose that  $(\bomega_1,\bomega_2,\bomega_3)$
is  $\xi$-admissible and that $[\bomega_1]\otimes[\bomega_2]$ is $\ell$--highest weight. Then Theorem \ref{pairs} shows that the modules $[\bomega_1]\otimes[\bomega_2]\otimes[\bomega_3]$ and $[\bomega_3]\otimes[\bomega_1]\otimes[\bomega_2]$ are $\ell$--highest weight. Hence the corresponding quotients $[\bomega_1\bomega_2]\otimes[\bomega_3]$ and $[\bomega_3]\otimes[\bomega_1\bomega_2]$ are $\ell$--highest weight.  Working with $\xi^*$ we see similarly that $[\bomega_1^*\bomega_2^*]\otimes[\bomega_3^*]$ and $[\bomega_3^*]\otimes[\bomega_1^*\bomega_2^*]$ are $\ell$--highest weight. The irreducibility of the four quotient modules follows from the discussion preceding the statement of the Lemma.

\end{pf}

 \subsection{} Recall the map $\wt:\cal P^+_\xi\to P^+$ given by extending $\wt\bomega_{i,a}=\omega_i$ to a morphism of monoids; for $\bpi\in\cal P_\xi^+$ with  $\wt\bpi=\sum_{i=1}^n r_i\omega_i$  set  \begin{gather*} \Ht\bpi=\sum_{i=1}^nr_i,\ \  \min\bpi=\min\{i\in I: r_i\neq 0\},\ \  \max\bpi=\max\{i\in I: r_i\neq 0\}.\end{gather*} If $\bpi\in \bp\bor_\xi$ and  $b\in\{\xi(j)+1,\xi(j)-1\}$, $j\in[1,n]$ are such that $\bomega_{j,b}^{-1}\bpi\in\bp\bor_\xi$ then $j\in\{\min\bpi,\max\bpi\}$. \begin{prop} \label{split} Let $\xi$ be an arbitrary height function and let $\bomega,\bomega'\in\bp\bor_\xi$.
\begin{enumerit}
\item[(i)] Suppose that  $\bomega\bomega'\in\bp\bor_\xi$ and  write $\bomega=\bomega_1\bomega_{i,a}$ with $\bomega_1\in\bp\bor_\xi$ and $\bomega_{i,a}\bomega'\in\bp\bor_\xi$.
If $a=\xi(i)+1$  then $[\bomega]\otimes[\bomega']$ is $\ell$--highest weight and otherwise   $[\bomega']\otimes[\bomega]$ is $\ell$--highest weight.
\item[(ii)] If $i=\max\bomega<j=\min\bomega'$ and  $|\xi(i)-\xi(j)|\ne j-i$, then  the module $[\bomega]\otimes[\bomega']$ is 
irreducible.
\end{enumerit}
\end{prop}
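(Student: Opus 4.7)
For part (i), I will assume $a=\xi(i)+1$; the case $a=\xi(i)-1$ is symmetric, obtained by reversing the tensor order and invoking the second half of Proposition \ref{braid}. The strategy is to realize $[\bomega]\otimes[\bomega']$ as a surjective image of a tensor product of fundamental modules that is $\ell$-highest weight via Theorem \ref{pairs}.

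Factor $\bomega_1=\prod_s \bomega_{k_s,c_s}$ and $\bomega'=\prod_t \bomega_{j_t,b_t}$. Order the factors of each so that those with $c_s=\xi(k_s)+1$ (resp.\ $b_t=\xi(j_t)+1$) come before the remaining ones. Proposition \ref{braid} then ensures every consecutive pair within each group is $\ell$-highest weight, and since $a=\xi(i)+1$, prepending $[\bomega_{i,a}]$ at the far left keeps every pair involving it $\ell$-highest weight. In particular $[\bomega_{i,a}]\otimes[\bomega_1]$ and $[\bomega_{i,a}]\otimes[\bomega']$ are $\ell$-highest weight, with unique simple quotients $[\bomega]$ and $[\bomega_{i,a}\bomega']$, yielding a surjection $[\bomega_{i,a}]\otimes[\bomega_1]\otimes[\bomega']\twoheadrightarrow[\bomega]\otimes[\bomega']$. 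To apply Theorem \ref{pairs} to the full fundamental product and conclude $\ell$-highest weightness, it remains to verify that every cross pair $[\bomega_{k_s,c_s}]\otimes[\bomega_{j_t,b_t}]$ (in the chosen order) is $\ell$-highest weight. This is the main obstacle, and it is where the structural hypotheses enter: $\bomega\bomega'\in\bp\bor_\xi$ pins down $\bomega\bomega'=\bomega(p,q)$ with $\bomega,\bomega'\in\bp\bor_\xi$ forcing $\bomega=\bomega(p,i)$ and $\bomega_{i,a}\bomega'=\bomega(i,q)$ (a consecutive splitting of indices), so the $a$-values of all fundamental factors obey the rigid sign pattern built into the definition of $\bomega(p,q)$. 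A short case analysis of the local behavior of $\xi$ near the splitting index $i$ then confirms that in the chosen ordering each cross pair has its left factor of type $+1$ or its right factor of type $-1$, and Proposition \ref{braid} supplies $\ell$-highest weightness.

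For part (ii), the hypothesis $|\xi(i)-\xi(j)|\ne j-i$ together with $i<j$ means $\xi$ is not monotonic on $[i,j]$, so for any $i_s\le i$ in the support of $\bomega$ and any $j_t\ge j$ in the support of $\bomega'$, $\xi$ is not monotonic on $[i_s,j_t]$ either, giving $|\xi(i_s)-\xi(j_t)|<j_t-i_s$. Inspecting the irreducibility criterion from \cite{CBraid} recalled just before Proposition \ref{braid} shows that under this inequality $[\bomega_{i_s,a_s}]\otimes[\bomega_{j_t,b_t}]$ is irreducible in both orders for every cross pair, since the differences $\pm(b_t-a_s)=\pm(\xi(j_t)-\xi(i_s)+\delta)$ with $\delta\in\{-2,0,2\}$ never reach the forbidden value $j_t-i_s+2$.

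Now arrange the fundamental factors of $\bomega$ in a good ordering (as in part (i)), followed by the fundamental factors of $\bomega'$ also in a good ordering. Internal pairs are $\ell$-highest weight by Proposition \ref{braid}, and cross pairs are irreducible hence $\ell$-highest weight, so Theorem \ref{pairs} yields that the full product is $\ell$-highest weight. Surjecting down gives that $[\bomega]\otimes[\bomega']$ is $\ell$-highest weight. The same argument for the height function $\xi^*$ (whose non-monotonicity condition is the same, since $\xi^*(n+1-m)=\xi(m)$) shows $[(\bomega')^*]\otimes[\bomega^*]$ is $\ell$-highest weight as well. The duality statement from \cite{CPminaff} recalled in Section \ref{admis} then promotes these two $\ell$-highest weight assertions to irreducibility of $[\bomega]\otimes[\bomega']$. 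Overall, the main technical obstacle is the cross-pair verification in part (i); part (ii) is essentially a combination of pairwise irreducibility with the duality.
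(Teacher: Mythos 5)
Your overall strategy is genuinely different from the paper's: you reduce everything to tensor products of fundamental modules and attempt a global pairwise check via Theorem~\ref{pairs}, whereas the paper runs an induction on $\Ht\bomega+\Ht\bomega'$ that at each step only needs to control three factors (peeling one fundamental off $\bomega'$ and invoking the inductive hypothesis for parts (i) and (ii) on the resulting pairs). The paper's route is cleaner precisely because it never has to examine arbitrary cross pairs of fundamental factors.

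There is a genuine gap in your treatment of part~(i). You assert that, in your chosen ordering, ``each cross pair has its left factor of type $+1$ or its right factor of type $-1$.'' This is false. Because the signs in the fundamental factorization of $\bomega(m,q)$ alternate at each breakpoint of $\xi$, with $a=\xi(i)+1$ forcing the factor at $j=\min\bomega'$ to be of type $\xi(j)-1$, the factors of $\bomega_1$ will generically contain $-1$--type entries and those of $\bomega'$ will contain $+1$--type entries (already for $\xi(i)=\xi(i)+(-1)^i$ with $\bomega=\bomega(1,2)$, $\bomega'=\bomega(3,5)$ one gets the cross pair $(\bomega_{1,\xi(1)-1},\bomega_{4,\xi(4)+1})$). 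Such $(-1,+1)$ cross pairs are not covered by the first half of Proposition~\ref{braid}, and your ``short case analysis'' as stated cannot confirm what you claim. The argument can be repaired: for any such pair $\bomega_{k,\xi(k)-1}$, $\bomega_{l,\xi(l)+1}$ with $k<i<l$, the index $i$ is a breakpoint of $\xi$ lying strictly between $k$ and $l$, so $\bomega(k,l)$ has at least three fundamental factors and hence $\bomega_{k,\xi(k)-1}\bomega_{l,\xi(l)+1}\notin\bp\bor_\xi\cup\{\bof_p\}$; the second half of Proposition~\ref{braid} then gives irreducibility (hence $\ell$--highest weight) of that pair in either order. You should state this explicitly rather than the incorrect sign claim; without it the application of Theorem~\ref{pairs} is unjustified. (A cleaner alternative is simply to follow the paper's induction, which sidesteps the cross-pair bookkeeping entirely.)

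Your part~(ii) is correct as an alternative argument. The non-monotonicity of $\xi$ on $[i,j]\subset[i_s,j_t]$ does give $|\xi(i_s)-\xi(j_t)|\le j_t-i_s-2$, so $|b_t-a_s|\le j_t-i_s$ stays strictly below the first forbidden value $j_t-i_s+2$ in the \cite{CBraid} criterion, and parity rules out equality; combining this with the internal-pair analysis, the $\xi^*$ version, and the duality criterion from \cite{CPminaff} gives irreducibility. This is more computational than the paper's three-module inductive argument but is sound.
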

\begin{pf} The proof of both parts is by an induction on $p=\Ht\bomega+\Ht\bomega'$ with  Proposition \ref{braid} showing  that induction begins when $p=2$.  For the inductive step assume that we have proved both parts for $p'<p$ and also assume  without loss of generality that $\Ht\bomega'\ge 2$. 
 
For  part (i)  write $$\bomega'=\bomega_{j,b}\bomega'',\ \ {\rm{with}}\ \  \bomega''\in\bp\bor_\xi,\ \ \bomega\bomega_{j,b}\in\bp\bor_\xi$$ and observe that
if $k\in\{\min\bomega'',\max\bomega''\}$ then $|\xi(i)-\xi(k)|\ne |k-i|$.   The inductive hypothesis  applies to the pairs $(\bomega_{j,b},\bomega'')$ and to $(\bomega,\bomega_{j,b})$; hence either both of  $[\bomega_{j,b}]\otimes [\bomega'']$ and  $[\bomega_{j,b}]\otimes [\bomega]$ or both of  $[\bomega'']\otimes [\bomega_{j,b}]$ and $[\bomega]\otimes [\bomega_{j,b}]$ are $\ell$--highest weight. 
The inductive hypothesis from part (ii)   applies to the pair $(\bomega, \bomega'')$ and so $[\bomega]\otimes[\bomega'']$ is irreducible. It follows from Theorem \ref{pairs} that the module $[\bomega_{j,b}]\otimes[\bomega'']\otimes[\bomega]$ is $\ell$--highest weight (or $[\bomega]\otimes [\bomega'']\otimes[\bomega_{j,b}]$ is $\ell$--highest weight). Hence the quotient $[\bomega']\otimes[\bomega]$ (or $[\bomega]\otimes[\bomega'])$ is $\ell$--highest weight and the inductive step for (i) is proved.

For part (ii) we continue to write $\bomega'=\bomega_{j,b}\bomega''$ and observe that the inductive hypothesis applies to the pairs $(\bomega, \bomega_{j,b})$ and $(\bomega, \bomega'')$ and gives that $[\bomega]\otimes[ \bomega_{j,b}]$ and $[\bomega]\otimes [\bomega'']$ are irreducible. Since  the inductive   step has been proved for  part (i) it  applies to the pair $(\bomega_{j,b},\bomega'')$ and so we see that $(\bomega'',\bomega_{j,b}, \bomega)$ is $\xi$-admissible. The conditions of the proposition obviously hold for $\xi$ iff they hold for $\xi^*$; hence it follows from Lemma \ref{admis} that $[\bomega]\otimes[\bomega']$ is irreducible. 

 \end{pf}

  \subsection{} The next proposition is essential to prove our main result. 

\begin{prop}\label{hl2result} Suppose that $1\le j_1\le j_2\le j_3\le j_4\le n$ are such that $|\xi(j_s)-\xi(j_{s+1})|= j_{s+1}-j_s$ for all $1\le s\le 3$.
\begin{enumerit}
\item[(i)] Let $j_1<j_2$ and $\bomega_{j_3,a}\in\bp\bor_\xi$. The module 
 $[\bomega(j_1,j_2)]\otimes[\bomega_{j_3,a}]$ is irreducible if  $$ \bomega(j_1,j_2)\bomega_{j_3,a}\notin\{\bomega(j_1,j_3),\ \ \bof_3\bomega_{j_1,\xi(j_1)\pm 1}\}.$$  An analogous statement holds if $j_2<j_3$ and $\bomega_{j_1,a}\in\bp\bor_\xi$.\\

\item[(ii)] Let $j_1<j_2<j_3$. The following modules are irreducible :\\ $\bullet$ $[\bomega(j_1,j_2)]\otimes [\bomega(j_1,j_2)]$,\\
 $\bullet$ $[\bomega(j_1,j_2)]\otimes[\bomega(j_1,j_3)]$   if $\xi(j_2-1)\ne \xi(j_2+1)$,\\
 $\bullet$ $[\bomega(j_1,j_2)]\otimes[\bomega (j_2,j_3)]$ if 
 $\xi(j_2-1)=\xi(j_2+1)$. \\ 

\item[(iii)] Assume  that $j_1<j_2<j_3<j_4$. Then the following are irreducible:\\
$\bullet$
$[\bomega(j_1,j_4)]\otimes[\bomega(j_2,j_3)]$  if  $|\xi(j_4)-\xi(j_1)|=j_4-j_1$,\\  $\bullet$  $[\bomega(j_1,j_2)]\otimes[\bomega(j_3,j_4)]$ if   $\bomega(j_1,j_2)\bomega(j_3,j_4)\ne \bomega(j_1,j_4).$ 
  \end{enumerit}
 \end{prop}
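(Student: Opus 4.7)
My proof would proceed by a simultaneous induction on the total height $\Ht\bomega+\Ht\bomega'$ of the two factors, handling all three parts together. The base cases, where both factors are fundamental, follow directly from Proposition \ref{braid}. The main engine of the induction is the $\xi$-admissibility lemma of Section \ref{admis}: given a factorization $\bomega=\bomega_a\bomega_b$ inside $\bp\bor_\xi$ with $\bomega_a,\bomega_b\in\bp\bor_\xi$, Proposition \ref{split}(i) supplies the required $\ell$-highest-weight property of one of $[\bomega_a]\otimes[\bomega_b]$ or its reverse, and if each of $[\bomega_a]\otimes[\bomega']$ and $[\bomega_b]\otimes[\bomega']$ is irreducible by the inductive hypothesis (together with the $\xi^*$-dual statements, whose hypotheses are symmetric under $\xi\leftrightarrow\xi^*$), the lemma yields irreducibility of $[\bomega]\otimes[\bomega']$.

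For part (i), I would split $\bomega(j_1,j_2)=\bomega_{j_1,a_1}\cdot\bomega(i_2,j_2)$ along the second index $i_2$ of the canonical factorization; this is a valid product in $\bp\bor_\xi$ because $i_2$ is a turning point with $\xi(i_2-1)=\xi(i_2+1)$, so that the signs of the $a$-exponents match on both sides. The admissibility lemma applied to the triple $(\bomega_{j_1,a_1},\bomega(i_2,j_2),\bomega_{j_3,a})$ reduces the problem to checking irreducibility of $[\bomega(i_2,j_2)]\otimes[\bomega_{j_3,a}]$, a shorter instance of part (i) handled by induction, and of $[\bomega_{j_1,a_1}]\otimes[\bomega_{j_3,a}]$, which follows from Proposition \ref{braid} provided $\bomega_{j_1,a_1}\bomega_{j_3,a}\notin\bp\bor_\xi\cup\{\bof_i:1\le i\le n\}$. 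The key point is to verify the contrapositive: if $\bomega_{j_1,a_1}\bomega_{j_3,a}$ were an HL-module or an $\bof$-element, then the full product $\bomega(j_1,j_2)\bomega_{j_3,a}$ would necessarily coincide with one of the two forbidden products $\bomega(j_1,j_3)$ or $\bof_{j_3}\bomega_{j_1,\xi(j_1)\pm 1}$, contradicting the hypothesis. The degenerate case $j_2=j_3$ requires a dual reorganization: one peels off the last fundamental factor of $\bomega(j_1,j_2)$ and combines it with $\bomega_{j_3,a}$, reducing to a shorter part (i).

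For parts (ii) and (iii), analogous splittings along break points of the HL-chain reduce each sub-case to smaller instances of parts (i)--(iii). For $[\bomega(j_1,j_2)]\otimes[\bomega(j_1,j_3)]$ under $\xi(j_2-1)\ne\xi(j_2+1)$, I would split the second factor past $j_2$, so that one resulting piece has support disjoint from $\bomega(j_1,j_2)$ in the sense required by Proposition \ref{split}(ii), while the other is a smaller case of part (ii). For $[\bomega(j_1,j_2)]\otimes[\bomega(j_2,j_3)]$ the shared junction at $j_2$ admits a symmetric splitting, and the self-tensor $[\bomega(j_1,j_2)]^{\otimes 2}$ is handled by splitting one copy into its first fundamental and the rest. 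For part (iii), the first assertion splits $\bomega(j_1,j_4)$ into $\bomega(j_1,j_s)\cdot\bomega(j_t,j_4)$ bracketing the inner interval $[j_2,j_3]$, and the second splits one of the two disjoint modules into a fundamental and a shorter HL-module, with the disjoint pieces treated by Proposition \ref{split}(ii).

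The main obstacle is the bookkeeping: at each splitting step one must check that neither of the resulting smaller pairs lands in one of the forbidden exceptional configurations of the statement (so that the inductive hypothesis genuinely applies), and simultaneously verify the $\xi^*$-analogues, since the admissibility lemma requires both. The combinatorics is controlled by the alignment hypothesis $|\xi(j_s)-\xi(j_{s+1})|=j_{s+1}-j_s$, which guarantees that consecutive factors remain compatibly aligned for Proposition \ref{braid} and rules out the spurious coincidences with $\bomega(j_1,j_3)$ or $\bof_{j_3}\bomega_{j_1,\xi(j_1)\pm 1}$ that would otherwise block the induction.
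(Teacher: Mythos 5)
Your overall framework---induction driven by the $\xi$-admissibility lemma of Section \ref{admis}, with Propositions \ref{braid} and \ref{split} supplying the pairwise hypotheses---is the same machinery the paper uses, but your proposal has a genuine gap: it cannot produce the sub-cases that the paper does not prove at all but imports from \cite{HL2}, namely part (i) when $|\xi(j_1)-\xi(j_3)|=j_3-j_1$, the first two bullets of part (ii), and the first bullet of part (iii). Concretely, take $\xi(i)=i$ on $[j_1,j_3]$ with $j_1<j_2<j_3$, so that $\bomega(j_1,j_2)=\bomega_{j_1,\xi(j_1)-1}\bomega_{j_2,\xi(j_2)+1}$, and take $a=\xi(j_3)+1$. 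The hypothesis of part (i) holds (the product $\bomega(j_1,j_2)\bomega_{j_3,a}$ has height $3$, while $\bomega(j_1,j_3)$ has height $2$ because $\xi$ has no turning point at $j_2$, and no index is repeated), so the proposition asserts irreducibility; yet $\bomega_{j_1,a_1}\bomega_{j_3,a}=\bomega(j_1,j_3)\in\bp\bor_\xi$, so $[\bomega_{j_1,a_1}]\otimes[\bomega_{j_3,a}]$ is \emph{reducible} by the exchange relation \eqref{basecase}. Since any nontrivial splitting of $\bomega(j_1,j_2)$ must isolate $\bomega_{j_1,a_1}$, no $\xi$-admissible triple exists: the first bullet of the definition of admissibility fails, and your ``contrapositive'' claim---that reducibility of a constituent pair would force the full product into the forbidden set---is false. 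These aligned configurations are exactly the ones established in \cite{HL2} by different methods (truncated $q$-characters and the monotone/bipartite categorifications); some such external input, or an argument like the graded-limit dimension count of Section \ref{starti}, is unavoidable and cannot be bootstrapped from Proposition \ref{braid} alone.

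A secondary point: under the standing hypothesis $|\xi(j_s)-\xi(j_{s+1})|=j_{s+1}-j_s$ there are no interior turning points in $(j_1,j_2)$, so $\bomega(j_1,j_2)$ is simply the product of the two fundamentals $\bomega_{j_1,a_1}\bomega_{j_2,a_2}$; the splitting you describe ``along the second index $i_2$ of the canonical factorization, a turning point with $\xi(i_2-1)=\xi(i_2+1)$'' does not exist here, and the induction on total height has essentially no room to run, since every module appearing in the statement has height at most $4$. Where your reductions do apply---the non-aligned case of part (i), the third bullet of part (ii), and the second bullet of part (iii)---they coincide with the paper's argument: two-fundamental splittings, Proposition \ref{braid} for both the $\ell$-highest-weight property and the pairwise irreducibility, and Lemma \ref{admis} applied to $\xi$ and $\xi^*$.
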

 \begin{pf}   Part (i) was proved in \cite{HL2} if $|\xi(j_3)-\xi(j_1)|=j_3-j_1$. If $|\xi(j_3)-\xi(j_1)|\ne j_3-j_1$writing $\bomega(j_1,j_2)=\bomega_{j_1,a_1}\bomega_{j_2,a_2}$,   our assumptions force,$$j_2\ne j_3, \ \    \xi(j_2-1)=\xi(j_2+1),\ \   \bomega_{j_2,a_2}\bomega_{j_3,a}\ne \bomega(j_2,j_3).$$ Proposition \ref{braid} now shows that  $(\bomega_{j_1,a_1},\bomega_{j_2,a_2},\bomega_{j_3,a})$ is a  $\xi$--admissible triple. It also proves that 
 $(\bomega_{n+1-j_1,a_1},\bomega_{n+1-j_2,a_2},\bomega_{n+1-j_3,a})$ is $\xi^*$-admissible and the hence Lemma \ref{admis} gives the result.
 The proof of the analogous statement for $\bomega_{j_1,a}$ is entirely similar.
 \medskip
 
 \noindent
 
 The first two assertions in part (ii) were proved in \cite{HL2}. Suppose that $j_2<j_3$ and $\xi(j_2-1)=\xi(j_2+1)$  and write $$\bomega(j_1,j_2)=\bomega_{j_1,a_1}\bomega_{j_2,a_2},\ \ \bomega(j_2, j_3)=\bomega_{j_2,a_2}\bomega_{j_3,a_3}.$$ Then $$a_1=\xi(j_1)\pm 1\iff a_2=\xi(j_2)\mp 1, \ \ a_3=\xi(j_3)\pm 1.$$ 
 Assuming that $a_1=\xi(j_1)+1$ we use  Proposition \ref{split} and Theorem \ref{pairs} and part (i) of this proposition to see that  $ [\bomega_{j_3,a_3}]\otimes [\bomega_{j_2,a_2}]\otimes[\bomega(j_1,j_2)]$ is $\ell$--highest weight and hence so is the quotient $[\bomega(j_2,j_3)]\otimes [\bomega(j_1,j_2)]$. Similarly working with $ [\bomega_{j_1,a_1}]\otimes [\bomega_{j_2,a_2}]\otimes[\bomega(j_2,j_3)]$ we see that $[\bomega(j_1,j_2)]\otimes [\bomega(j_2,j_3)]$ is $\ell$--highest weight. Repeating the argument with $\xi^*$ proves the irreducibility and proves the third assertion of part (ii).
 \medskip
 
 \noindent
 
The first assertion in (iii) was proved in \cite{HL2}.
 If $|\xi(j_4)-\xi(j_1)|\neq j_4-j_1$ then  either  $\xi(j_2-1)=\xi(j_2+1)\ \ {\rm{or}}\ \ \xi(j_3-1)=\xi(j_3+1)$.  Write 
 $$\bomega(j_1, j_2)=\bomega_{j_1,a_1}\bomega_{j_2,a_2},\qquad\bomega(j_3,j_4)=\bomega_{j_3,a_3}\bomega_{j_4,a_4},$$ and observe that  since $j_2<j_3$ and  $\bomega(j_1,j_2)\bomega(j_3,j_4)\ne \bomega(j_1,j_4)$ we get 
  \begin{gather*} (a_1, a_2)=(\xi(j_1)\mp 1, \xi(j_2)\pm 1)\iff (a_3,a_4)=(\xi(j_3)\pm 1,\xi(j_4)\mp  1). 
  \end{gather*} 
  If  $\xi(j_3-1)=\xi(j_3+1)$ then  $|\xi(j_4)-\xi(j_2)|\ne (j_4-j_2)$. Using parts (i) and (ii) of the current  proposition  and Proposition \ref{split} we  see that  $(\bomega_{j_3,a_3},\bomega_{j_4,a_4}, \bomega(j_1,j_2))$ is $\xi$-admissible. If  $\xi(j_2-1)=\xi(j_2+1)$ then an identical argument shows that 
  $(\bomega_{j_1,a_1},\bomega_{j_2,a_2}, \bomega(j_3,j_4))$ is $\xi$-admissible.
  Since the analogous equalities hold  for $\xi^*$   Lemma \ref{admis} now proves the irreduciblity of $[\bomega(j_1,j_2)]\otimes [\bomega(j_3,j_4)] $.

\end{pf}

\subsection{}\label{irred} In the rest of the section we shall prove the following theorem.
\begin{thm}\label{irredcrit} \hfill

\begin{enumerit}
\item[(a)] For all $\bpi\in\bp\bor_\xi$ and $k\in[1,n]$ we have $[\bpi][\bof_k]=[\bpi\bof_k]$.
\item[(b)]
Let   $\bpi_1, \bpi_2\in\bp\bor_\xi$.  The equality   $[\bpi_1\bpi_2]=[\bpi_1][\bpi_2]$ holds in $\cal K_0(\cal F_\xi)$ if
 one of the following conditions is  satisfied: 
 \begin{enumerit} 
 \item[(i)] $\bpi_1\bpi_2\notin\bp\bor_\xi$ and  $\max\bpi_s< \min\bpi_m$ for some  $s,m\in [1,2]$.
\item [(ii)] there exists $1\le i\le n$ and $a\in\{\xi(i)+1,\xi(i)-1\}$ such that 
$\bomega_{i,a}^{-1}\bpi_s\in\bp\bor_\xi$ for $s=1,2$,
\item[(iii)] there exists $s,m\in\{1,2\}$ such that either\\ (a)
  $\min\bpi_s<i=\min\bpi_m<j=\max\bpi_s<\max\bpi_m$ and $\Ht\bomega(i,j)$ is odd, or\\
 (b) $\min\bpi_s<i=\min\bpi_m<j=\max\bpi_m<\max\bpi_s$ and $\Ht\bomega(i,j)$ is even.
 \end{enumerit}
 \end{enumerit}
 \end{thm}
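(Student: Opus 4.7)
\emph{Overall strategy.} In every case I would reduce irreducibility of $[\bpi_1]\otimes[\bpi_2]$ (respectively $[\bpi]\otimes[\bof_k]$) to checking that, for some ordering of the atomic factors $\bomega_{i,a}$ comprising the two sides, every pairwise tensor is $\ell$-highest weight. Theorem~\ref{pairs} then guarantees that the full tensor product is $\ell$-highest weight, with irreducible quotient $[\bpi_1\bpi_2]$. Running the analogous computation with the dual height function $\xi^\ast$ yields $\ell$-highest weight in the opposite order, and then the criterion recalled in Section~\ref{admis} (via Lemma from that subsection) upgrades this to irreducibility of the tensor product itself.

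\emph{Part (a).} Factor $\bof_k=\bomega_{k,\xi(k)+1}\bomega_{k,\xi(k)-1}$ and rearrange as $[\bomega_{k,\xi(k)+1}]\otimes[\bpi]\otimes[\bomega_{k,\xi(k)-1}]$. Proposition~\ref{braid} shows that every pair involving the outer two factors is $\ell$-highest weight, because the leftmost factor has $a=\xi(k)+1$ and the rightmost has $a=\xi(k)-1$. For the pairs internal to $\bpi$, the definition of $\bp\bor_\xi$ together with Proposition~\ref{split}(i) arranges the atomic factors of $\bpi$ into an $\ell$-highest weight order which is compatible with the outer placement. Theorem~\ref{pairs} and the $\xi^\ast$-duality then finish part (a).

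\emph{Part (b).} For (i), when heights do not match across the gap the conclusion is immediate from Proposition~\ref{split}(ii); when they do match, the hypothesis $\bpi_1\bpi_2\notin\bp\bor_\xi$ forces a label incompatibility at the boundary pair $(\bomega_{\max\bpi_s,\ast},\bomega_{\min\bpi_m,\ast})$, which Proposition~\ref{hl2result}(i) resolves, while Proposition~\ref{split} together with Theorem~\ref{pairs} handles the remaining ``outer'' factors. For (ii), both primes share a common $\bomega_{i,a}$; writing $\bpi_s=\bomega_{i,a}\tilde\bpi_s$ and invoking Proposition~\ref{split}(i), the inner tensor $[\tilde\bpi_1]\otimes[\tilde\bpi_2]$ is controlled by Proposition~\ref{hl2result}(ii), and the common factor $[\bomega_{i,a}]$ merges in via Proposition~\ref{braid}. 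For (iii), the interleaving configuration fits Proposition~\ref{hl2result}(iii) after extracting from $\bpi_s$ and $\bpi_m$ the subwords corresponding to the interval $[i,j]$: the parity of $\Ht\bomega(i,j)$ is exactly what prevents the overlap from reconstituting a single element of $\bp\bor_\xi$, so the relevant bullet of Proposition~\ref{hl2result}(iii) applies and, combined with the admissibility triples of Section~\ref{admis} for the outer pieces, closes the case.

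\emph{Main obstacle.} The hardest step is case (iii): one must match the combinatorial parity of $\Ht\bomega(i,j)$ precisely with the labels $a_m=\xi(i_m)\pm 1$ along the interleaved subword, and then arrange the two prime pieces into admissible triples in the sense of Section~\ref{admis} so that the $\xi/\xi^\ast$ dualization closes. A secondary technical point is verifying, in case (i), that whenever the heights $\xi(\max\bpi_s)$ and $\xi(\min\bpi_m)$ line up but $\bpi_1\bpi_2\notin\bp\bor_\xi$, the mismatch can always be isolated to a single transition pair amenable to Proposition~\ref{hl2result}(i); this in effect uses the combinatorics underlying the definition of $\bomega(i,j)$ to localize the obstruction.
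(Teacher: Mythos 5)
Your strategy diverges from the paper's in a way that, as written, leaves a real gap: the paper's proof of Theorem~\ref{irredcrit} is fundamentally \emph{inductive}, on $\Ht\bpi$ in part (a) and on $\Ht\bpi_1+\Ht\bpi_2$ in part (b), and the inductive hypothesis is what supplies the irreducibility hypotheses needed to form $\xi$-admissible triples. You do not set up any induction, and this omission is not cosmetic.

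Concretely, the paper never tries to decompose $\bpi_1\otimes\bpi_2$ into its atomic factors $\bomega_{i,a}$. It instead cuts one of the primes into exactly \emph{two} pieces, $\bpi_1=\bomega_1\bomega_2$ with $\bomega_1,\bomega_2\in\bp\bor_\xi$, and shows that the triple $(\bomega_1,\bomega_2,\bpi_2)$ is $\xi$-admissible. Admissibility requires (among other things) that $[\bomega_s]\otimes[\bpi_2]$ be \emph{irreducible} for $s=1,2$; it is here that the inductive hypothesis is applied, since $\Ht\bomega_s+\Ht\bpi_2<\Ht\bpi_1+\Ht\bpi_2$. Your proposal has no mechanism to certify this irreducibility. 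Decomposing all the way to atomic factors and applying Theorem~\ref{pairs} cannot substitute: the irreducibility conclusion of Theorem~\ref{pairs} requires each pair $[\bomega_s]\otimes[\bomega_p]$ to be isomorphic to $[\bomega_s\bomega_p]$, which is false for the atomic factors internal to a single prime $\bpi_s$ (their tensor product is $\ell$-highest weight but reducible, with $[\bpi_s]$ as a proper quotient).

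There is a second, more technical obstruction in your ``atomic'' approach. To conclude irreducibility of $[\bpi_1]\otimes[\bpi_2]$ from the criterion in Section~\ref{admis}, you must exhibit that $[\bpi_1]\otimes[\bpi_2]$ and $[\bpi_2^*]\otimes[\bpi_1^*]$ are \emph{both} $\ell$-highest weight, as two-factor tensors with the indicated block order. To descend from atomic $\ell$-highest weightness (via Theorem~\ref{pairs}) to these two-factor statements, the atomic factors of $\bpi_1$ must occupy a contiguous segment of your chosen ordering, followed by a contiguous segment of $\bpi_2$'s factors (and the reverse, in the $\xi^*$ picture). Your suggested ordering in part~(a), $[\bomega_{k,\xi(k)+1}]\otimes[\bpi]\otimes[\bomega_{k,\xi(k)-1}]$, interleaves the atomic factors of $\bof_k$ with the block $[\bpi]$; the resulting module surjects onto $[\bof_k\bpi]$ but is not (at least not obviously) a cover of $[\bpi]\otimes[\bof_k]$, so you cannot transfer $\ell$-highest weightness. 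The paper avoids this by never splitting $\bof_k$: it instead writes $\bpi=\bomega\bomega_{i,a}$ and works with the triple $(\bomega,\bomega_{i,a},\bof_k)$, using the inductive hypothesis for $[\bomega]\otimes[\bof_k]$ and $[\bomega_{i,a}]\otimes[\bof_k]$.

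Finally, in part (b) you lean on Proposition~\ref{hl2result} as though it were a general tool, but in the paper it serves only as the induction base ($\Ht\bpi_1+\Ht\bpi_2\le 3$, and the case $\Ht\bpi_1=\Ht\bpi_2=2$). The heart of case~(iii) in the paper's argument is a fairly delicate sub-case analysis (finding interior splitting points $p$, $m$, $p'$ and checking admissibility of the resulting triples via cases (i), (ii) already established at lower height), and your description --- ``extracting subwords'' and matching parity --- does not engage with how these triples are built or why the inductive hypothesis applies to them. Without a clean induction and a precise accounting of which pairs must be shown irreducible at each stage, the argument does not close.
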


\subsection{Proof of Theorem \ref{irredcrit}}

 Notice that the hypothesis   of the theorem hold for the pair $(\bpi_1,\bpi_2)$  if and only if they hold for the pair $(\bpi_1^*,\bpi_2^*)$ of elements in $\bp\bor_{\xi^*}$. In particular if we show that the conditions imply that we can write $\bpi_1=\bomega_1\bomega_2$ so that $(\bomega_1,\bomega_2, \bpi_2)$ is $\xi$--admissible then the triple $(\bomega_1^*,\bomega_2^*,\bpi_2^*)$ is $\xi^*$--admissible. Lemma \ref{admis} then proves that $[\bpi_1]\otimes[\bpi_2]$ is irreducible. A similar comment applies to the pair $(\bof_p,\bpi)$. This observation  will be frequently used without further mention in the proof of the theorem.
 
 We proceed by induction on $\Ht\bpi$. If $\bpi=\bomega_{i,a}$ and $|\xi(i)-\xi(k)|= |k-i|$ the result was proved in \cite{HL2}. If $|\xi(i)-\xi(k)|\ne |k-i|$ then Proposition \ref{split} shows that the triple $(\bomega_{k,\xi(k)+1},\bomega_{k,\xi(k)-1},\bomega_{i,a})$ is $\xi$--admissible proving that  $[\bomega_{i,a}]\otimes[\bof_k]$ is irreducible.
If $\Ht\bpi>1$ write $\bpi=\bomega\bomega_{i,a}$ with $i=\max\bpi$ and $\bomega\in\bp\bor_\xi$. The inductive hypothesis and Proposition \ref{split} show that the triples $(\bomega, \bomega_{i,a},\bof_k)$ is $\xi$-admissible and the inductive step is proved.
\medskip

 \noindent All three assertions in part (b)  are proved by an  induction on $p=\Ht\bpi_1+\Ht\bpi_2$.  Proposition \ref{hl2result} shows that induction begins when $p\le 3$. It also shows that the result hold when $\Ht\bpi_1=\Ht\bpi_2=2$. Hence for the inductive step we  assume that the results hold for all $3\le p'<p$   and that either $\Ht\bpi_1>2$ or $\Ht\bpi_2>2$.
 \medskip
 
\noindent  To prove the inductive step for (i)  assume without loss of generality that $\Ht\bpi_1>2$ and  write  $\bpi_1=\bomega_1\bomega_{j_1,a_1}\bomega_{j_2,a_2}$ with $\bomega_1\in\bp\bor_\xi$  such that one of the following holds:
\begin{gather*}  \max\bpi_1<\min\bpi_2,\ \ \max\bomega_1<j_1<j_2=\max\bpi_1 \ \ {\rm{and}}\ \ \xi(j_1-1)=\xi(j_1+1),\ \\ \max\bpi_2<\min\bpi_1,\ \  \min\bpi_1=j_1<j_2<\min\bomega_1 \ \  {\rm{and}}\ \ \xi(j_2-1)=\xi(j_2+1).\end{gather*}
It follows that  $\bomega_1\bpi_2\notin\bp\bor_\xi$ and since $\bpi_1\bpi_2\notin\bp\bor_\xi$  we also have $\bomega_{j_1,a_1}\bomega_{j_2,a_2}\bpi_2\notin\bp\bor_\xi$. Hence  $[\bomega_{j_1,a_1}\bomega_{j_2,a_2}]\otimes[\bpi_2]$ and  $[\bomega_1]\otimes [\bpi_2]$  are irreducible by the inductive hypothesis.  Proposition \ref{split} shows that either $[\bomega_1]\otimes [\bomega_{j_1,a_1}\bomega_{j_2,a_2}]$ or $[\bomega_{j_1,a_1}\bomega_{j_2,a_2}]\otimes [\bomega_1]$ is $\ell$--highest weight proving  that the triple $(\bomega_1,\bomega_{j_1,a_1}\bomega_{j_2,a_2},\bpi_2)$ is $\xi$--admissible. The proof of the inductive step for (i) is complete. 

\medskip

\noindent To prove the inductive step for (ii) notice that
  the conditions on $\bpi_1$ and $\bpi_2$  imply that one of the following hold:   $\max\bpi_1=\min\bpi_2=i$ and $\xi(i-1)=\xi(i+1)$ or $\min\bpi_1=\min\bpi_2=i$ or $\max\bpi_1=\max\bpi_2=i$.  
 Assume  first that  $\max\bpi_1=\min\bpi_2=i$. If $\Ht\bpi_1\ge 3$ write $\bpi_1=\bomega_{k,c}\bomega_1$ with $\max\bomega_1=i$; otherwise $\Ht\bpi_2\ge 3$ write $\bpi_2=\bomega_{k,c}\bomega_2$ and $\min\bomega_2=i$. In the first case, Proposition \ref{split} and the inductive hypothesis show that the triple $(\bomega_{k,c},\bomega_1,\bpi_2)$ is $\xi$--admissible while in the second case we get that $(\bomega_{k,c},\bomega_2,\bpi_1)$ is  $\xi$-admissible. In either case the irreducibility of $[\bpi_1]\otimes[\bpi_2]$ follows from Lemma \ref{admis}. If $\min\bpi_1=\min\bpi_2$ assume without loss of generality that $\Ht\bpi_1\le\Ht\bpi_2$ and  let $k=\max\bpi_1$. Write $\bpi_2=\bomega\bomega'$ with $\bomega,\bomega'\in\bp\bor_\xi$ satisfying: $\min\bomega=\min\bpi_1$, $\max\bomega<k,$  $\min\bomega'\ge k$ and  $\min\bomega'=k$ if $\xi(k-1)=\xi(k+1)$.
 The inductive hypothesis applies to $(\bpi_1,\bomega)$, it also applies to  $(\bpi_1,\bomega')$ if $\xi(k-1)=\xi(k+1)$ and otherwise $\bpi_1\bomega'\notin\bp\bor_\xi$ and part (b)(i) applies and shows that the corresponding tensor products are irreducible.
  Since  Proposition \ref{split} applies to $(\bomega,\bomega')$ we have now shown that $(\bomega,\bomega',\bpi_1)$ is $\xi$--admissible and the inductive step is proved in this case

 \medskip

\noindent  Finally, we prove the inductive step for (iii). This amounts to proving the following: if 
$1\le i_1<i_2<i_3<i_4\le n$ then the tensor product $[\bomega(i_1,i_3)]\otimes [\bomega(i_2,i_4)]$ is irreducible if $\Ht\bomega(i_2,i_3)$ is odd and $[\bomega(i_1,i_4)]\otimes [\bomega(i_2,i_3)]$ is irreducible if $\Ht\bomega(i_2,i_3)$ is even. It is simple to see that $p=\Ht\bomega(i_1,i_3)+\Ht\bomega(i_2,i_4)=\Ht\bomega(i_1,i_4)+\Ht\bomega(i_2,i_3)$.
Since  Proposition \ref{hl2result} shows that the result holds when $p=4$ it  means that it holds whwn $\Ht\bomega(i_1,i_4)=2$. Hence for the inductive step we  may assume  $\Ht\bomega(i_1,i_4)\ge 3$.
\medskip

\noindent Suppose that $\Ht\bomega(i_2,i_3)=2$. Using Proposition \ref{split}, the inductive hypothesis and   parts (b)(i),(ii) of this theorem we see that one of the following holds:\\ 
$\bullet$ there exists $i_1<m\le i_2$ and $i_4>p\ge i_3$ such that $\bomega(i_1,i_4)=\bomega(i_1,m)\bomega(p,i_4)$  and  $(\bomega(i_1,m),\bomega(p,i_4), \bomega(i_2,i_3))$ is $\xi$--admissible,\\
$\bullet$ there exists $b\in\{\xi(i_4)+1,\xi(i_4)-1\}$ with  $\bomega(i_1,i_4)=\bomega(i_1,i_2)\bomega_{i_4,b}$ and $(\bomega(i_1,i_2),\bomega_{i_4,b}, \bomega(i_2,i_3))$ is $\xi$--admissible,\\
$\bullet$ there exists $a\in\{\xi(i_1)+1,\xi(i_1)-1\}$ with
 $\bomega(i_1,i_4)=\bomega_{i_1,a}\bomega(i_3,i_4)$
 and $(\bomega(i_3,i_4),\bomega_{i_1,a}, \bomega(i_2,i_3))$ is $\xi$--admissible.\\
 In all cases the irreducibility of $[\bomega(i_1,i_4)]\otimes[\bomega(i_2,i_3)]$ is proved. 
 \medskip

\noindent  Suppose that  $\Ht\bomega(i_2,i_3)\geq 3$ and  let $i_2< p< i_3$ be minimal such that $|\xi(p)-\xi(i_2)|=p-i_2$ with  $\xi(p-1)=\xi(p+1)$. Similarly, let $i_2< m< i_3$ be maximal so that $|\xi(i_3)-\xi(m)|= i_3-m$ and $\xi(m-1)=\xi(m+1)$. Then Proposition \ref{split}, parts (b)(i) and (b)(ii) and the inductive hypothesis show that one of the following hold:\\
$\bullet$
if  $\xi(i_2-1)=\xi(i_2+1)$, then
\begin{gather*}
    \Ht\bomega(i_2,i_3)\ {\rm{odd}}\ \implies  (\bomega(i_1,i_2),\bomega(p,i_3),\bomega(i_2,i_4))\ \ {\rm{is}}\  \xi-{\rm{admissible}},\\
     \Ht\bomega(i_2,i_3)\ {\rm{even}}\ \implies  (\bomega(i_1,i_2),\bomega(p,i_4),\bomega(i_2,i_3))\ \ {\rm{is}}\ \  \xi-{\rm{admissible}},\end{gather*} 
     $\bullet$
if $\xi(i_3-1)=\xi(i_3+1)$, then  
\begin{gather*}
\Ht\bomega(i_2,i_3)\ {\rm{odd}}\ \implies  (\bomega(i_2,m),\bomega(i_3,i_4),\bomega(i_1,i_3))\ \ {\rm{is}}\ \  \xi-{\rm{admissible}}\\
    \Ht\bomega(i_2,i_3)\ {\rm{even}}\ \implies \bomega(i_1,m),\bomega(i_3,i_4),\bomega(i_2,i_3))\ \  {\rm{is}}\  \xi-{\rm{admissible}},\\ \end{gather*}  
$\bullet$ 
 $\xi(i_j+1)\ne\xi(i_j-1)$ for $j=2,3$. 
 \medskip
 
\noindent  If $p=m$ there exists $b\in\bc(q)^\times$ such that $\bomega_{i_4,b}^{-1}\bomega(i_2,i_4)\in\bp\bor_\xi$ and the triple 
$(\bomega(i_2,p), \bomega_{i_4,b},\bomega(i_1,i_3))$ is $\xi$-admissible.

 \noindent If $p\ne m$ let $p<p'\leq m$ be minimum such that $\xi(p'-1)=\xi(p'+1)$.  Then
$(\bomega(i_2,p), \bomega(p',i_4),\bomega(i_1,i_3))$ is $\xi$-admissible if $\Ht\bomega(i_2,i_3)$ is odd and  otherwise $(\bomega(i_2,p),\bomega(p',i_3), \bomega(i_1,i_4))$ is $\xi$--admissible.
 \\ \\
In all cases the inductive step follows and the proof of the theorem is complete.

\section{Identities in $\cal K_0(\cal F_\xi)$}\label{reducible}

In this section we establish Proposition \ref{clusterrep} and Proposition \ref{clustermon}.

\subsection{} We will need  the converse of Theorem \ref{irredcrit}(b). The most elementary case is the following  well--known. Namely, let $i\le j$ satisfy $\xi(i)-\xi(j)=\pm(j-i)$; then  the following equality holds in $\cal K_0(\cal F_\xi)$:
\begin{equation}\label{basecase}[\bomega_{i,\xi(i)\pm 1}][\bomega_{j,\xi(j)\mp 1}]=[\bomega_{i,\xi(i)\pm 1}\bomega_{j,\xi(j)\mp 1}]+[\bomega_{i-1,\xi(i)}][\bomega_{j+1,\xi(j)}].\end{equation}
Given $\bpi=\bomega\bomega_{i,a}\in\bp\bor_\xi$ with $\bomega\in\bp\bor_\xi$, set $$\bpi'=\bomega\bomega_{i-1,\xi(i)},\ \ i=\max\bpi,\ \ '\bpi=\bomega_{i+1,\xi(i)}\bomega,\ \ i=\min\bpi.$$ 
In the remaining cases the converse is
 most conveniently stated as follows. 
\begin{thm}\label{t:reduc}
\hfill
\begin{enumerit}
\item[(i)]  Suppose that  $\bpi_1\bpi_2\in\bp\bor_\xi$  and $\max\bpi_1<\min\bpi_2$. Then \begin{equation}\label{disja}[\bpi_1][\bpi_2]=[\bpi_1\bpi_2] +[\bpi_1'][\, '\bpi_2].\end{equation}
\item[(ii)] Suppose that $ \bomega(m,p)\in\bp\bor_\xi$ 
and for $m<i<p$,  write   $$\bomega(m,i)=\bomega_1\bomega_{i,a},  \ \ \bomega(i,p)=\bomega_{i,b}\bomega_2.$$
If $a\ne b$ then \begin{gather*} (*)\ \ [\bomega(m,p)][\bomega_{i,a}]= [\bomega(m,i)][\bomega_2]+[\bomega_1']['\bomega(i,p)],\\ 
(**)\ \ [\bomega(m,p)][\bomega_{i,b}]= [\bomega_1][\bomega(i,p)]+[\bomega(m,i)']['\bomega_2].\end{gather*}
If $a=b$ then \begin{gather*}
(\dagger)\ \ [\bomega(m,p)][\bomega_{i,a'}] = [\bomega_1][\bof_i][\bomega_2] + [\bomega(m,i)']['\bomega(i,p)], \quad \{a,a'\} = \{\xi(i)+1, \xi(i)-1\}, \\
(\dagger\dagger)\ \  [\bomega(m,p)][\bomega_{i,a}]= [\bomega(m,i)][\bomega(i,p)]+[\bomega_1'][\bof_i]['\bomega_2].
\end{gather*}
Finally if $\bpi_1=\bomega_1\bomega_{i,a}$ and  $\bpi_2=\bomega_{i,b}\bomega_2$ are in $\bp\bor_\xi$ with  $\max\bpi_1=\min\bpi_2$ and $a\ne b$ then 
\begin{equation}\label{minmaxkr} 
[\bpi_1][\bpi_2]=[\bomega_1][\bof_i][\bomega_2]+[\bpi_1']['\bpi_2].\end{equation}
\item[(iii)] Assume that $i_1<i_2<i_3<i_4$  and write  $$\bomega(i_1, i_2)=\bomega_1\bomega_{i_2,a},\ \  \bomega(i_2,i_3)=\bomega_{i_2,b}\bomega\bomega_{i_3,c},\ \  \bomega(i_3,i_4)=\bomega_{i_3,d}\bomega_2.$$
Then$ (-1)^{\Ht(\bomegas(i_2,i_3))}\left([\bomega(i_1,i_3)][\bomega(i_2,i_4)]-[\bomega(i_1,i_4)][\bomega(i_2,i_3)]\right)$ is equal to $$[\bomega_1']^{\delta_{a,b}}[\bomega(i_1,i_2)')]^{1-\delta_{a,b}}\left(\prod_{s=i_2}^{i_3}[\bof_s]^{\delta_{\xi(s-1),\xi(s+1)}}\right)['\bomega_2]^{\delta_{c,d}}['\bomega(i_3,i_4))]^{1-\delta_{c,d}}.$$ 
\end{enumerit}
\end{thm}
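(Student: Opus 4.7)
My overarching strategy is to reduce everything to the base identity \eqref{basecase} using Theorem \ref{irredcrit} as a distribution tool. Whenever a module $[\bomega]$ tensors irreducibly with every simple class appearing on both sides of a Grothendieck-ring identity, one may multiply through by $[\bomega]$ and obtain a new identity of the same shape, with $[\bomega]$ absorbed into the existing labels. This lets me promote \eqref{basecase} from its base case (two fundamental modules) to arbitrary HL-modules.

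\textbf{Parts (i) and (ii) with $a\neq b$.} In part (i), write $\bpi_1=\bomega_1\bomega_{i,a}$ with $i=\max\bpi_1$ and $\bpi_2=\bomega_{j,b}\bomega_2$ with $j=\min\bpi_2$. The hypothesis $\bpi_1\bpi_2\in\bp\bor_\xi$ forces $|\xi(i)-\xi(j)|=j-i$ with $a,b$ of opposite signs, so \eqref{basecase} applies directly to $[\bomega_{i,a}][\bomega_{j,b}]$. Theorem \ref{irredcrit}(b)(i),(ii) then lets me multiply by $[\bomega_1]$ on the left and $[\bomega_2]$ on the right, at each step combining the new outside factor with an HL-module on both terms. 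The same local-at-$i$ application of \eqref{basecase} yields $(*)$, $(**)$ and \eqref{minmaxkr} in part (ii); the only difference is that $i$ is now an interior vertex, so outside pieces $\bomega_1,\bomega_2$ appear on both terms of the resulting identity.

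\textbf{Part (ii) with $a=b$.} Here $\xi(i-1)=\xi(i+1)$, so $i$ is a sink or source, and \eqref{basecase} does not directly apply to $[\bomega_{i,a}][\bomega_{i,a'}]$. The key intermediate identity to establish is
\[
[\bomega_{i,a}][\bomega_{i,a'}] \;=\; [\bof_i] \;+\; [\bomega_{i-1,\xi(i)}][\bomega_{i+1,\xi(i)}],
\]
which follows from two applications of \eqref{basecase} together with $\bof_i=\bomega_{i,\xi(i)+1}\bomega_{i,\xi(i)-1}$. Absorbing the outside factors $[\bomega_1]$, $[\bomega_2]$ into each summand via Theorem \ref{irredcrit} yields $(\dagger)$ and $(\dagger\dagger)$, after distinguishing which of $\bomega(m,i)$, $\bomega(i,p)$ carries the displayed $a$ versus $a'$.

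\textbf{Part (iii) and the main obstacle.} I induct on $\Ht\bomega(i_2,i_3)$. When $\Ht\bomega(i_2,i_3)=2$ the four-vertex identity reduces to a case already covered by (i) and (ii). For the inductive step, Theorem \ref{irredcrit}(b)(iii) tells us that exactly one of the two products on the left-hand side is irreducible (determined by the parity of $\Ht\bomega(i_2,i_3)$); I expand the other product using part (ii) at a vertex $s$ with $i_2<s<i_3$ and $\xi(s-1)=\xi(s+1)$, producing a $[\bof_s]$ factor. Each such expansion shortens the interval $(i_2,i_3)$ by one unit and contributes one $[\bof_s]$ to the target formula, simultaneously flipping the parity and hence the sign; iterating produces both the product $\prod_{s=i_2}^{i_3}[\bof_s]^{\delta_{\xi(s-1),\xi(s+1)}}$ and the overall $(-1)^{\Ht\bomega(i_2,i_3)}$. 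The main obstacle is the endpoint bookkeeping: whether $[\bomega_1']$ or $[\bomega(i_1,i_2)']$ appears (and the mirror choice at the right end) depends on whether $a=b$ and whether $c=d$, so the inductive step splits into four sub-cases that must be verified individually by tracking precisely which version of part (ii) is applied at each endpoint.
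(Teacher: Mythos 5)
Your plan claims to reduce the entire theorem to the rank-one identity \eqref{basecase} by repeatedly ``multiplying through by $[\bomega]$'' whenever $[\bomega]$ tensors irreducibly with everything in sight. This absorption step fails at the very first stage. Take part (i) with $\bpi_1=\bomega_1\bomega_{i,a}$, $i=\max\bpi_1$, and $\bpi_2 = \bomega_{j,b}\bomega_2$, $j=\min\bpi_2$. Multiplying \eqref{basecase} by $[\bomega_1]$ produces the term $[\bomega_1][\bomega_{i,a}\bomega_{j,b}]=[\bomega_1][\bomega(i,j)]$, and to ``absorb'' you would need this tensor product to be irreducible. But $\bomega_1\bomega(i,j)=\bpi_1\bomega_{j,b}\in\bp\bor_\xi$, so Theorem~\ref{irredcrit}(b)(i) does not apply (it requires the product to fall \emph{outside} $\bp\bor_\xi$), (b)(ii) does not apply (no common vertex), and (b)(iii) does not apply ($\max\bomega_1<\min\bomega(i,j)$). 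In fact this product is generically reducible — Theorem~\ref{t:reduc}(i) itself, applied one step down, says so — so you would be inserting a reducible class exactly where an irreducible one is needed.

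What is actually missing is the content of Proposition~\ref{csesf}: that for $\bomega_{i,a},\bpi\in\bp\bor_\xi$ with $i<\min\bpi$ and $\bomega_{i,a}\bpi\in\bp\bor_\xi$ one has $[\bomega_{i,a}][\bpi]-[\bomega_{i,a}\bpi]=[\bomega_{i-1,\xi(i)}]['\bpi]$. The identity \eqref{basecase} is the height-one special case of this, and there is no ``distribution'' shortcut from one to the other: the paper spends all of Section~\ref{starti} establishing \ref{csesf} by a two-sided dimension argument — a $q$-character lower bound on one side, and on the other an upper bound coming from graded limits, fusion products, and level-two Demazure modules ($M(\omega_i,\lambda)$, the short exact sequence \eqref{ses1}, Propositions~\ref{reseq} and~\ref{rses}). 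None of this appears in your proposal. Once \ref{csesf} is in hand, the paper's proof of (i) is still not a simple absorption: it is an induction on $\Ht\bpi_1$ that computes $[\bomega_1][\bomega_{i,a}][\bpi_2]$ in two different associations and equates cross-terms (and a separate induction for Lemma~\ref{krout} in the $i+1=j$ case). Your outline for (ii) and (iii) inherits the same gap since it rests on the same ``multiply and absorb'' primitive, and your intermediate identity $[\bomega_{i,a}][\bomega_{i,a'}]=[\bof_i]+[\bomega_{i-1,\xi(i)}][\bomega_{i+1,\xi(i)}]$ is just Lemma~\ref{krout} with $\bpi=1$, which again needs \ref{csesf} to push to higher height. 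In short, the hard analytic input of the theorem has been assumed away rather than proved.
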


From now on  we freely use (often without mention) the results of Theorem \ref{irredcrit}. We deduce Proposition \ref{clusterrep} and Proposition \ref{clustermon} before proving Theorem \ref{t:reduc}.

\subsection{Proof of Proposition \ref{clusterrep} } The proposition is obviously a special case of equation \eqref{basecase} and Theorem \ref{t:reduc}(i),(ii). However the translation from the formulation in this section to the one in Section 1 which is adapted to  cluster algebras needs some clarification which we provide for the readers convenience. We recall that $d_j=\delta_{\xi(j),\xi(j+2)}=\delta_{j,j_\diamond}$.

For part (i) of Proposition \ref{clusterrep} we take   $$\bpi_1 = \bomega_{i,\xi(i+1)},\qquad \bpi_2 = \bomega(i,i+1)^{1-d_i}\bomega_{i,\xi(i+1)\pm 2}^{d_i}=\bomega_{i, \xi(i+1)\pm 2}\bomega_{i+1,\xi(i+2)}^{1-d_i},
$$  where the second formula for $\bpi_2$ uses  the fact that  $\xi(i+2)=\xi(i+1)\mp 1=\xi(i)\mp 2$ if $d_i=0$.  Theorem \ref{irredcrit} gives $$[\bpi_1\bpi_2]=[\bof_i][\bomega_{i+1,\xi(i+2)}]^{1-d_i},\ \ \ \ [\bomega_{i+1,\xi(i)}\bomega_{i+1,\xi(i+2)}^{1-d_i}]= [\bof_{i+1}]^{1-d_i}[\bomega_{i+1,\xi(i)}]^{d_i},$$ 
Using  either \eqref{basecase} or \eqref{minmaxkr} we get $$[\bpi_1][\bpi_2]=[\bof_i][\bomega_{i+1,\xi(i+2)}]^{1-d_i}+[\bomega_{i-1,\xi(i)}][\bof_{i+1}]^{1-d_i}[\bomega_{i+1,\xi(i)}]^{d_i}$$ as needed.

 For   Proposition \ref{clusterrep}(ii)  using the definition of $\bar j$ we can rewrite its left hand side as
 $$[\bomega(i,\bar j)]^{1-\delta_{j,i_\diamond}}[\bomega_{i,\xi(i+1)\pm 2}]^{\delta_{j,i_\diamond}} = [\bomega(i,j+1)]^{1-d_j}\left([\bomega_{i,\xi(i+1)\pm 2}]^{1-\delta_{i,j_\bullet}}[\bomega(i, j_\bullet+1)]^{\delta_{i,j_\bullet}}\right)^{d_j}.$$ It is easiest to verify the four cases given by  $d_j\in\{0,1\}$ and $\delta_{j_\bullet,i}\in\{0,1\}$ separately. 
  If  $d_j=1$ the left hand side of Proposition \ref{clusterrep}(ii) is $[\bpi_1][\bpi_2]$ where  $$\bpi_1 = \bomega_{j,\xi(j+1)},\ \ \bpi_2= \bomega_{i,\xi(i+1)\pm 2}^{1-\delta_{i,j_\bullet}}\bomega(i,i+1)^{\delta_{i,j_\bullet}}  = \bomega_{i,\xi(i+1)\pm 2}\bomega_{i+1,\xi(i+2)\mp 2}^{\delta_{i,j_\bullet}}.
$$ 
and the right hand side is
$$ [\bof_j]^{d_{j-1}}\ 
[\bomega(i,j)]^ {1-d_{j-1}}  [\bomega_{i, \xi(i+1)\pm 2}]^{d_{j-1}} +    [\bof_i]^{\delta_{i,j_\bullet}} [\bomega_{j+1, \xi(j+2)}] [\bomega_{i-1, \xi(i)}]^{1-\delta_{i,j_\bullet}}.$$ 
Since $\delta_{j_\bullet,i}=0\implies j_\bullet<i\le j-1\implies d_{j-1}=0,$ and $\bomega(i,j)=\bomega_{i,\xi(i+1)\pm 2}\bomega_{j,\xi(j+1)}$ we see that the right hand side of Proposition \ref{clusterrep}(ii) is precisely the right  hand side of \eqref{basecase} and we are done. Otherwise $$\delta_{j_\bullet,i}=1\ \ {\rm and \ either}\ \ j_\bullet=j-1\ \ {\rm{or}} \ \ j_\bullet<j-1.$$
In the first case  $d_{j-1}=1$ and $i+1=j$ and so  the result follows from  \eqref{minmaxkr}; in the second case we have   $i+1<j$ and  $d_{j-1}=0$. Since $i=j_\bullet=i_\diamond$ we also  have $\xi(i)=\xi(i+2)$ which implies that $\bomega(i,i+1)\bomega_{j,\xi(j+1)}=\bomega(i,j)$, The result follows from Theorem \ref{t:reduc}(i).
\medskip

If $d_j=0$ then the left hand side of Proposition \ref{clusterrep} is $[\bpi_1][\bpi_2]$ where  $$\bpi_1=\bomega_{j,\xi(j)+1},\ \ \ \bpi_2= \bomega(i,j+1) = \bomega_{i,\xi(i+1)\pm 2}\bomega_{j,\xi(j+1)\mp 2}^{d_{j-1}}\bomega_{j+1,\xi(j+2)}$$  and  the right hand side of Proposition \ref{clusterrep} is $$ [\bof_j]^{d_{j-1}}\ 
[\bomega(i,j)]^ {1-d_{j-1}}  [\bomega_{i, \xi(i+1)\pm 2}]^{d_{j-1}} [\bomega_{j+1,\xi(j+2)}]+  [\bof_{j+1}]  [\bof_i]^{\delta_{i,j_\bullet}}  [\bomega_{i-1, \xi(i)}]^{1-\delta_{i,j_\bullet}}.$$
If $d_{j-1}=1$ then $i=j-1 = j_\bullet$ 
and the result follows from ($\dagger$) in Theorem \ref{t:reduc}(ii).
If  $d_{j-1}=0$, then the result follows from equation ($*$) in Theorem \ref{t:reduc}(ii).

The proof of part (iii) is a similar detailed analyses. Note that $[\bomega(i,\bar j)] = [\bomega(i, j_\bullet +1)]^{d_j}[\bomega(i,j+1)]^{1-d_j}$.
If $d_j=1$, we take $$\bpi_1 =\bomega_{j,\xi(j+1)},\ \ \bpi_2 =\bomega(i,j_\bullet+1)$$ and use  Theorem \ref{t:reduc}(i) if $j_\bullet+1 < j$ and \eqref{minmaxkr} if $j_\bullet+1 = j$. If $d_j = 0$ we take $$\bpi_1=\bomega_{j,\xi(j+1)},\ \ \ \bpi = \bomega(i,j+1).$$ Note that  $\bomega(i,j_\bullet+1)\bomega_{j,\xi(j+1)} = \bomega(i,j)\in \bp\bor$ if $j_\bullet+1<j$  and that if we write $k=(j_\bullet)_\bullet$ then  $$\bpi_2\bomega_{j,\xi(j+1)} = \begin{cases} (\bomega_{i,\xi(i+1)\pm 2}^{\delta_{k,i_\bullet}}\bomega(i,k+1)^{1-\delta_{k,i_\bullet}})\bof_j\bomega_{j+1,\xi(j+2)},\ \ d_{j-1}=1,\\ \\ \bomega(i,j_\bullet+1)\bomega_{j+1,\xi(j+2)},\ \ d_{j-1}=0.\end{cases}$$ An application of Theorem \ref{t:reduc} as in the other cases completes the proof.

\subsection{Proof of  Proposition \ref{clustermon}}. Let $\bomega,\bomega'\in \bp\bor$ be  such that $[\bomega\bomega']= [\bomega][\bomega']$. By Section \ref{iotaroot} we can choose $\alpha,\beta\in\Phi_{\ge -1}$  such that  $$[\bomega] = \iota(x[\alpha]),\ \ [\bomega']=\iota(x[\beta]).$$
We claim that $x[\alpha]x[\beta]$ is a cluster monomial. If not,  we can write  $$x[\alpha]x[\beta] = m_1x[\gamma]x[\eta] + m_2x[\gamma']x[\eta'],$$ where $m_1,m_2\in \bz_{\geq 0}[f_i: i\in I]$ where $\gamma,\gamma',\eta),\eta')$ are in $\Phi_{\ge -1}$.
Applying $\iota$ to both sides of the equation we get $$[\bomega\bomega']=[\bomega][\bomega']=\iota(x[\alpha]x[\beta])= \iota(m_1)\iota(x[\gamma])\iota(x[\eta]) + \iota(m_2)\iota(x[\gamma'])\iota(x[\eta']).$$ Since $\iota(m_1),\iota(m_2)\in \bz_{\geq 0}[\bof_i: i\in I]$ this means that we can write $[\bomega\bomega']$ as a nontrivial linear combination of elements $\{[\bpi]:\bpi\in\cal P^+_\xi\}$ which is absurd. 

Suppose now that $\bpi,\bpi'\in\bp\bor_\xi$ are  such that $[\bpi][\bpi']\ne[\bpi\bpi']$.  Theorem \ref{irredcrit} amd Theorem \ref{t:reduc}  imply  that $[\bpi\bpi']$ and  $[\bpi][\bpi']-[\bpi\bpi']$ are  in the $\bz_{\geq 0}[\bof_i: i\in I]$-  span of elements of the form  $[\bomega]=[\bomega_1]\cdots[\bomega_p]$ with $\bomega_s\in\bp\bor_\xi$, $1\le s\le p$. By the previous  discussion it follows that such products are the images of cluster monomials. Hence the inverse image under $\iota$ of  $[\bpi][\bpi']$ is a positive  linear combination of certain cluster monomials; in particular the inverse image is not a cluster monomial and the proof is complete.

\subsection{} In the rest of the paper we prove Theorem \ref{t:reduc}.  The crucial step is the following proposition whose proof we postpone to the next section.

\begin{prop}\label{csesf} 
 Let $\bomega_{i,a},\bpi$ be elements of $\bp\bor_\xi$ with $i< \min\bpi=j$ or $i> \max\bpi=k$ and $\bomega_{i,a}\bpi\in\bp\bor_\xi$. Let $b,c\in\bc(q)$ be such that $\bomega_{j,b}^{-1}\bpi$ and $\bomega_{k,c}^{-1}\bpi$ are elements of $\bp\bor_\xi$. We have $$[\bomega_{i,a}][\bpi]-[\bomega_{i,a}\bpi]=[\bomega_{i-1,\xi(i)}] ['\bpi],\ \ i<j,$$ $$
 [\bomega_{i,a}][\bpi]-[\bomega_{i,a}\bpi]=[\bomega_{i+1,\xi(i)}] [\bpi'],\ \ i>k.$$
 \end{prop}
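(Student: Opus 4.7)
My plan is to reduce to the first identity via the diagram involution and then induct on $\Ht\bpi$. By Section \ref{admis}, applying the $*$-involution sends the pair $(\bomega_{i,a}, \bpi)$ with $i<\min\bpi$ to the pair $(\bomega_{i,a}^*, \bpi^*)$ with $n+1-i > \max\bpi^*$ in the $\xi^*$-setting, and $'\bpi$ corresponds to $(\bpi^*)'$. Thus it suffices to prove the first formula, namely
\begin{equation*}
[\bomega_{i,a}][\bpi] - [\bomega_{i,a}\bpi] \;=\; [\bomega_{i-1,\xi(i)}]\,['\bpi], \qquad i < j:=\min\bpi.
\end{equation*}
The base case $\Ht\bpi=1$ gives $\bpi=\bomega_{j,\xi(j)\mp 1}$, and the hypothesis $\bomega_{i,a}\bpi\in\bp\bor_\xi$ forces $a=\xi(i)\pm 1$ with $|\xi(i)-\xi(j)|=j-i$; this is exactly \eqref{basecase} with $'\bpi=\bomega_{j+1,\xi(j)}$.

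\textbf{Inductive step.} Assume $\Ht\bpi\ge 2$ and factor $\bpi=\bomega_{j,b}\bpi_1$ with $\bpi_1\in\bp\bor_\xi$, $\min\bpi_1=j_2>j$, $\Ht\bpi_1<\Ht\bpi$. Because $\bomega_{i,a}\bpi=\bomega(i,\max\bpi)$ and the factorization of an element of $\bp\bor_\xi$ is unique, the lowest two factors of $\bomega(i,\max\bpi)$ must be $\bomega_{i,a}$ and $\bomega_{j,b}$; since $\bomega(i,j)$ is by definition the product of $\bomega_{m,\ast}$'s for $m=i$, the intermediate local extrema of $\xi$, and $m=j$, this forces $\bomega_{i,a}\bomega_{j,b}=\bomega(i,j)$ with no intermediate extrema, i.e.\ $|\xi(i)-\xi(j)|=j-i$. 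Identity \eqref{basecase} then gives
\begin{equation*}
[\bomega_{i,a}][\bomega_{j,b}] \;=\; [\bomega(i,j)] + [\bomega_{i-1,\xi(i)}][\bomega_{j+1,\xi(j)}],
\end{equation*}
while the inductive hypothesis applied to $(\bomega_{j,b},\bpi_1)$ (with $j<\min\bpi_1$ and $\bomega_{j,b}\bpi_1=\bpi\in\bp\bor_\xi$) gives
\begin{equation*}
[\bomega_{j,b}][\bpi_1] \;=\; [\bpi] + [\bomega_{j-1,\xi(j)}]\,['\bpi_1].
\end{equation*}
Multiplying the first by $[\bpi_1]$ and the second by $[\bomega_{i,a}]$, and equating the two resulting expressions for $[\bomega_{i,a}][\bomega_{j,b}][\bpi_1]$, I obtain
\begin{equation*}
[\bomega_{i,a}][\bpi] = [\bomega(i,j)][\bpi_1] + [\bomega_{i-1,\xi(i)}][\bomega_{j+1,\xi(j)}][\bpi_1] - [\bomega_{i,a}][\bomega_{j-1,\xi(j)}]\,['\bpi_1].
\end{equation*}

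\textbf{Identification of the right-hand side.} It remains to show this agrees with $[\bomega_{i,a}\bpi]+[\bomega_{i-1,\xi(i)}]\,['\bpi]$. For the middle term I plan to use Theorem \ref{irredcrit}(b) to verify that $[\bomega_{j+1,\xi(j)}]\otimes[\bpi_1]$ is irreducible (case (b)(i) applies when $j+1<j_2$, while case (b)(ii) applies when $j+1=j_2$ since then $\bomega_{j+1,\xi(j)}$ matches the first factor of $\bpi_1$), yielding $[\bomega_{j+1,\xi(j)}][\bpi_1]=['\bpi]$. The remaining step is the identity
\begin{equation*}
[\bomega(i,j)][\bpi_1] - [\bomega_{i,a}][\bomega_{j-1,\xi(j)}]\,['\bpi_1] \;=\; [\bomega_{i,a}\bpi],
\end{equation*}
which I plan to obtain by a second application of the same inductive strategy: expand $[\bomega_{i,a}][\bomega_{j-1,\xi(j)}]$ via \eqref{basecase} (which applies because $|\xi(i)-\xi(j-1)|=j-1-i$ follows from the monotonicity of $\xi$ on $[i,j]$), combine with the inductive hypothesis applied to the pair $(\bomega_{j-1,\xi(j)}, {}'\bpi_1)$ to produce a telescoping cancellation, and recognize the remaining term as $[\bomega(i,j)\bpi_1]=[\bomega_{i,a}\bpi]$ using the fact that $[\bomega(i,j)][\bpi_1]$ decomposes into at most two constituents with top $[\bomega_{i,a}\bpi]$.

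\textbf{Main obstacle.} The difficulty will be precisely this last telescoping: the product $[\bomega(i,j)][\bpi_1]$ is itself reducible (its second Jordan--H\"older constituent is nonzero whenever $\bpi_1$ has a factor of the correct form), and the needed cancellation with $[\bomega_{i,a}][\bomega_{j-1,\xi(j)}]['\bpi_1]$ must be matched term by term. This requires a careful case split depending on whether $j+1=j_2$ or $j+1<j_2$, and on whether $j$ itself is a local extremum of $\xi$; the corner cases $i=1$ and $\max\bpi=n$ are absorbed by the conventions $\bomega_{0,\ast}=\bomega_{n+1,\ast}=1$, and the symmetric statement for $i>\max\bpi$ follows by the $*$-duality noted at the outset.
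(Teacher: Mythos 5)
The core issue with your proposal is a circularity that cannot be avoided within the Grothendieck ring alone. Your inductive step, after combining the base case \eqref{basecase} with the inductive hypothesis for $(\bomega_{j,b},\bpi_1)$, reduces to the identity $[\bomega(i,j)][\bpi_1]-[\bomega_{i,a}][\bomega_{j-1,\xi(j)}]['\bpi_1]=[\bomega_{i,a}\bpi]$, and your plan for this rests on ``the fact that $[\bomega(i,j)][\bpi_1]$ decomposes into at most two constituents with top $[\bomega_{i,a}\bpi]$.'' But $\bomega(i,j)$ has $\Ht=2$, so the pair $(\bomega(i,j),\bpi_1)$ is not covered by your inductive hypothesis (which fixes the first slot to be a single $\bomega_{p,x}$), and the claim that such a product has at most two Jordan--H\"older constituents is precisely the content of Theorem~\ref{t:reduc}(i) --- whose proof in the paper proceeds by induction on $\Ht\bpi_1$ \emph{with Proposition~\ref{csesf} as its base case}. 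Appealing to it here is circular. In addition, the auxiliary object $'\bpi_1$ need not lie in $\bp\bor_\xi$ (when $j_2+1=\min\bomega_2$ it may equal $\bof_{j_2+1}\bomega''$), so even the inductive hypothesis applied to $(\bomega_{j-1,\xi(j)},{}'\bpi_1)$ is not licensed as stated.

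The obstacle you flagged is real, and it is not a bookkeeping issue: no argument that stays in $\cal K_0(\cal F_\xi)$ and iterates \eqref{basecase} can bound the \emph{number} of composition factors of the reducible products from above. The paper handles exactly this point by leaving the quantum setting: after using a $q$-character argument to show $[\bomega_{i-1,\xi(i)}\bomega_{j+1,\xi(j)}\bomega]$ occurs in the Jordan--H\"older series (the lower bound on dimensions in \eqref{geqxi}), it passes to graded limits over $\lie{sl}_{n+1}[t]$, uses the presentations $M(\omega_i,\lambda)$, the right-exact sequence of Proposition~\ref{rses}, the fusion-product lower bound of Proposition~\ref{reseq}, and Theorem~\ref{dimprod} to establish the matching upper bound on dimensions. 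That dimension count is the mechanism that rules out extra constituents, and it has no counterpart in your proposal. Your reduction of the $i>k$ case to $i<j$ via the $*$-involution and your base case $\Ht\bpi=1$ are both fine, but without a substitute for the dimension argument the inductive step does not close.
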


\subsection{Proof of Theorem \ref{t:reduc}(i)} We need the following consequence of Proposition \ref{csesf}.

 \begin{lem}\label{krout} Let $\bomega_{i,a}, \bomega_{i,b}\bpi\in\bp\bor_\xi$ and assume that $a\ne b$ and $\min\bpi>i$  (resp. $\max\bpi<i$). Then $$[\bomega_{i,a}][\bomega_{i,b}\bpi]=[\bof_i][\bpi] + [\bomega_{i-1,\xi(i)}][\bomega_{i+1,\xi(i)}\bpi],$$ $$({\rm{resp.}}\ \ [\bomega_{i,a}][\bomega_{i,b}\bpi]=[\bof_i][\bpi] + [\bomega_{i+1,\xi(i)}][\bomega_{i-1,\xi(i)}\bpi].)$$ 
\end{lem}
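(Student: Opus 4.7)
The plan is to prove the lemma by induction on $\Ht\bpi$, with Proposition~\ref{csesf} and the $i=j$ instance of \eqref{basecase} as the main tools. I will address the case $\min\bpi > i$; the case $\max\bpi < i$ follows by the symmetric argument.

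\textbf{First reduction.} I apply Proposition~\ref{csesf} to $(\bomega_{i,b}, \bpi)$, multiply the resulting identity by $[\bomega_{i,a}]$, and substitute the $j=i$ instance of \eqref{basecase},
\[
[\bomega_{i,a}][\bomega_{i,b}] = [\bof_i] + [\bomega_{i-1,\xi(i)}][\bomega_{i+1,\xi(i)}].
\]
A short rearrangement reduces the lemma to the identity
\begin{equation*}
[\bomega_{i+1,\xi(i)}][\bpi] - [\bomega_{i+1,\xi(i)}\bpi] = [\bomega_{i,a}]['\bpi]. \tag{$\star$}
\end{equation*}
Here I use that $\bomega_{i,b}\bpi \in \bp\bor_\xi$ together with the defining formula for $\bomega(i,k)$ pins down $b = 2\xi(i) - \xi(i+1)$, whence $a = \xi(i+1)$ and $[\bomega_{i,a}] = [\bomega_{i,\xi(i+1)}]$.

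\textbf{Main obstacle.} The identity $(\star)$ has the shape of Proposition~\ref{csesf} applied to $(\bomega_{i+1,\xi(i)}, \bpi)$, but that application demands both $i + 1 < \min\bpi$ and $\bomega_{i+1,\xi(i)}\bpi \in \bp\bor_\xi$. When $\min\bpi > i + 1$, the function $\xi$ is monotone on $[i, \min\bpi]$, and a short check against the definition of $\bomega(j, k)$ identifies $\bomega_{i+1,\xi(i)}\bpi$ with $\bomega(i+1, k)$; Proposition~\ref{csesf} then yields $(\star)$ directly, with the resulting RHS $[\bomega_{i,\xi(i+1)}]['\bpi]$ matching $[\bomega_{i,a}]['\bpi]$.

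\textbf{The resonant case.} The hard step is $\min\bpi = i + 1$. Here I write $\bpi = \bomega_{i+1,c}\bmu$ with either $\bmu = 1$ or $\min\bmu > i + 1$. In the first instance $(\star)$ is already the $j = i + 1$ case of \eqref{basecase}. Otherwise the hypothesis $\bomega_{i,b}\bpi \in \bp\bor_\xi$ forces $i + 1$ to be a middle index of $\bomega(i, k)$, so $\xi(i) = \xi(i+2)$, $\bomega_{i+1,\xi(i)}\bomega_{i+1,c} = \bof_{i+1}$, and $[\bomega_{i+1,\xi(i)}\bpi] = [\bof_{i+1}][\bmu]$ via Theorem~\ref{irredcrit}(a). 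Using Proposition~\ref{csesf} on $(\bomega_{i+1,c}, \bmu)$ to rewrite $[\bpi]$, together with \eqref{basecase} at $j = i + 1$, a direct computation shows that $(\star)$ is equivalent, after cancelling the non-zero-divisor $[\bomega_{i,a}]$, to
\begin{equation*}
[\bomega_{i+2,\xi(i+1)}][\bmu] - [\bomega_{i+2,\xi(i+1)}\bmu] = [\bomega_{i+1,\xi(i+2)}]['\bmu] \tag{$\star\star$}
\end{equation*}
(the right-hand side being $[\bomega_{i+1,\xi(i)}]['\bmu]$ rewritten using $\xi(i+2) = \xi(i)$). If $\min\bmu > i + 2$ then $(\star\star)$ is exactly Proposition~\ref{csesf} at the shifted index $i + 2$. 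Otherwise $\min\bmu = i + 2$; writing $\bmu = \bomega_{i+2,d}\bmu'$, the identity $(\star\star)$ is precisely the lemma at index $i + 2$ applied to the pair $(\bomega_{i+2,\xi(i+1)}, \bomega_{i+2,d}\bmu')$, which is available by the inductive hypothesis since $\Ht\bmu = \Ht\bpi - 1$.
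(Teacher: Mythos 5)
Your proof is correct and follows essentially the same route as the paper's: both reduce the lemma, via Proposition \ref{csesf} applied to $(\bomega_{i,b},\bpi)$ together with the rank-one relation \eqref{basecase}, to the identity you call $(\star)$ (the paper's $(*)$), and both dispose of it by Proposition \ref{csesf} when $\min\bpi>i+1$ and by induction in the resonant case. The only difference is cosmetic: in the resonant case $\min\bpi=i+1$ the paper recognizes $(\star)$ directly as the inductive hypothesis applied to the pair $(\bomega_{i+1,\xi(i)},\bomega_{i+1,c}\bmu)$, whereas you unfold one further step to $(\star\star)$ before invoking Proposition \ref{csesf} or the inductive hypothesis at index $i+2$.
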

\begin{pf}
 Proceed by induction on $\Ht\bomega_{i,b}\bpi$.  If $\Ht\bomega_{i,b}\bpi=1$ then the  result is well--known (see for instance \cite{H1}). Assume that we have proved the result if $\Ht\bomega_{i,b}\bpi<r$. Write $\bpi=\bomega_{m,c}\bomega$ with $m=\min\bpi$
 and note that $$a=\xi(i)\mp1\iff b=\xi(i)\pm 1\iff \xi(i+1)\pm 1 =\xi(i)\ {\rm{and}}\ \  c=\xi(m)\mp 1.$$ It follows that the pair $(\bomega_{i+1,\xi(i)},\bomega_{m,c}\bomega)$ satisfies  the conditions of Proposition \ref{csesf} if $i+1\ne m$ and the    inductive hypothesis of this Lemma if $i+1=m$ and so we have,  
$$(*)\ \ [\bomega_{i+1,\xi(i)}][\bomega_{m,c}\bomega] = [\bomega_{i+1, \xi(i)}\bomega_{m,c}\bomega] + [\bomega_{i,a}][\bomega_{m+1,\xi(m)}\bomega].$$ 
 The inductive hypothesis  and Proposition \ref{csesf} also give
 \begin{eqnarray*}&[\bomega_{i,a}][\bomega_{i,b}][\bpi]&=  [\bomega_{i,a}]\left([\bomega_{i,b}\bpi]   +[\bomega_{i-1,\xi(i)}][\bomega\bomega_{m+1,\xi(m)}]\right)\\&&=\left([\bof_i]+[\bomega_{i-1,\xi(i)}][\bomega_{i+1,\xi(i)}]\right)[\bpi]\\&&=[\bof_i][\bpi]+[\bomega_{i-1,\xi(i)}]\left([\bomega_{i+1, \xi(i)}\bomega_{m,c}\bomega] + [\bomega_{i,a}][\bomega_{m+1,\xi(m)}\bomega]\right).
 \end{eqnarray*}
 Equating the first and third terms on the right hand side and using (*) gives $$[\bof_i][\bpi] + [\bomega_{i- 1,\xi(i)}][\bomega_{i+1 ,\xi(i)}\bomega_{m,c}\bomega]=[\bomega_{i,a}][\bomega_{i,b}\bpi],$$ which establishes the inductive step. 
\end{pf}
The proof of Theorem \ref{t:reduc}(i) proceeds by an induction on $\Ht\bpi_1$ with Proposition \ref{csesf} showing that induction begins when $\Ht\bpi_1=1$. 
For the inductive step, recall that $\bpi_1=\bomega_{i,a}\bomega_1$ and $\bpi_2=\bomega_{j,b}\bomega_2$ with $\max\bpi_1=i< j=\min\bpi_2$. 
Since  $\bpi_1\bpi_2\in\bp\bor_\xi$ we see  that Proposition \ref{csesf} applies to the pairs $(\bomega_1,\bomega_{i,a})$,  $(\bomega_{i,a},\bpi_2)$ and also to the pairs $(\bomega_{i+1,\xi(i)}, \bpi_2)$ and $(\bomega_1, \bomega_{i-1,\xi(i)})$ if $i+1\ne j$ and $i-1\ne \min\bomega_1$. If $i+1=j$ (resp. $i-1=\min\bomega_1$) then Lemma \ref{krout} applies to $(\bomega_{i+1,\xi(i)}, \bpi_2)$ (resp.$(\bomega_1, \bomega_{i-1,\xi(i)})$).  Together with the inductive hypothesis which applies to  $(\bomega_1,\bomega_{i,a}\bpi_2)$ we get the following series of equalities:

\begin{gather*}
[\bpi_1][\bpi_2]+[\bomega_1']\left([\bomega_{i+1,\xi(i)}\bpi_2]+[\bomega_{i,\xi(i+1)}]['\bpi_2]\right)\ \ \ 
\\=\left([\bpi_1+[\bomega_1'][\bomega_{i+1,\xi(i)}]\right)[\bpi_2]\\=[\bomega_1][\bomega_{i,a}][\bpi_2]\\=[\bomega_1]\left([\bomega_{i,a}\bpi_2]+[\bomega_{i-1,\xi(i)}]['\bpi_2]\right)\\ =[\bpi_1\bpi_2]+[\bomega_1'][\bomega_{i+1,\xi(i)}\bpi_2]+[\bomega_1\bomega_{i-1\xi(i)}]['\bpi_2]+ [\bomega_1'][\bomega_{i,\xi(i-1)}]['\bpi_2].
\end{gather*}
Equating the first and the fifth terms gives the inductive step since $\xi(i-1)= \xi(i+1)$ and part (i) is proved.
\subsection{Proof of Theorem \ref{t:reduc}
(ii)}

Suppose that  $a\neq b$ which means that  $\xi(i-1)\neq \xi(i+1)$ and hence $\bomega(m,p)=\bomega_1\bomega_2$.  We prove equation (**); the proof of (*) being an obvious modification.
Using   Theorem \ref{irredcrit}(b)(i)  gives that $[\bomega(m,p)\bomega_{i,b}]=[\bomega_1][\bomega(i,p)]$
and we have to prove that $$[\bomega(m,p)][\bomega_{i,b}]-[\bomega_1][\bomega(i,p)]=[\bomega(m,i)']['\bomega_2].$$ For this we calculate $[\bomega_1][\bomega_{i,b}][\bomega_2]$ in two ways by using Proposition \ref{csesf} on $(\bomega_{i,b},\bomega_2)$ and part (i) of the theorem on  $(\bomega_1,\bomega_2)$. This gives, \begin{eqnarray*}  &[\bomega_1][\bomega_{i,b}][\bomega_2]&= [\bomega_1][\bomega(i,p)]+[\bomega_1][\bomega_{i-1,\xi(i)}]['\bomega_2]\\&&  =[\bomega_{i,b}][\bomega(m,p)]+ [\bomega_{i,b}][\bomega_1']['\bomega_2].\end{eqnarray*}
Equating we see that we must prove that   \begin{equation}\label{used} [\bomega_1][\bomega_{i-1,\xi(i)}]-[\bomega_1'][\bomega_{i,b}]=[\bomega_1\bomega_{i-1,\xi(i)}]=[\bomega(m,i)'].\end{equation} This follows since  Proposition \ref{csesf} applies if $\min\bomega_1<i-1$ (and Lemma \ref{krout} if $\min\bomega_1=i-1$) to the pair $(\bomega_1,\bomega_{i-1,\xi(i)})$.

If $a=b$ then  $\xi(i-1)=\xi(i+1)$ and hence Theorem \ref{irredcrit} shows that $[\bomega(m,i)][\bomega(i,p)]=[\bomega(m,p)\bomega_{i,a}]$ and $[\bomega(m,p)\bomega_{i,a'}] = [\bomega_1][\bof_i][\bomega_2]$. To prove $(\dagger)$
we use  part (i) of the theorem on the pair $(\bomega_1,\bomega(i,p))$ and Lemma \ref{krout} on the pair $(\bomega_{i,a'}, \bomega(i,p))$ to get
\begin{eqnarray*}&[\bomega_1][\bomega_{i,a'}][\bomega(i,p)]&=[\bomega_{i,a'}][\bomega(m,p)]+[\bomega_{i,a}][\bomega_1']['\bomega(i,p)],\\&&=[\bomega_1][\bof_i][\bomega_2]+[\bomega_1][\bomega_{i-1,\xi(i)}]['\bomega(i,p)].
\end{eqnarray*}
 Equating the right hand sides and using \eqref{used} gives the result.
The proof of $(\dagger\dagger)$ is similar;
 we calculate  $[\bomega_{i,a}][\bomega_1][\bomega(i,p)]$ in two ways by using Proposition \ref{csesf} on $(\bomega_{i,a},\bomega_1)$ and part (i) of the theorem on $(\bomega_1,\bomega(i,p))$. This gives 
\begin{eqnarray*}&[\bomega_{i,a}][\bomega_1][\bomega(i,p)]&=[\bomega(m,i)][\bomega(i,p)] +[\bomega_1'][\bomega_{i+1,\xi(i)}] [\bomega(i,p)]\\&&=[\bomega_{i,a}][\bomega(m,p)] +[\bomega_{i,a}][\bomega_1']['\bomega(i,p)]
\end{eqnarray*}
We then observe that  $\bomega_{i+1,\xi(i)}\bomega_2\in\bp\bor_\xi$ if $\xi(i)\ne\xi(i+2)$ and 
$\bomega_{i+1,\xi(i)}\bomega_2 = \bof_{i+1}\bomega'$, with $\bomega'\in\bp\bor_\xi$, if $\xi(i+1)=\xi(i+3)$. Then we can apply the results proved above of part (ii) of this theorem to the
pair  $(\bomega_{i+1,\xi(i)}, \bomega(i,p))$, and hence either by $(**)$ or by $(\dagger)$ we get
 $$[\bomega_{i+1,\xi(i)}][\bomega(i,p)] = [\bomega_{i,a}][\bomega_{i+1,\xi(i)}\bomega_2] + ['\bomega_2][\bof_i]$$
Equation $(\dagger\dagger)$ now follows  by a substitution, recalling that $['\bomega(i,p)]=[\bomega_{i+1,\xi(i)}\bomega_2]$, by definition. 

Finally we prove \eqref{minmaxkr}.  If $\Ht\bpi_1=1$ or $\Ht\bpi_2=1$ this was proved in Lemma \ref{krout}. Hence we may assume that $\bpi_1=\bomega(m,i)$ and $\bpi_2=\bomega(i,p)$ for some $m<i<p$.
Since $a\ne b$ we use Theorem \ref{irredcrit} to see that  $[\bomega(m,i)\bomega(i,p)] = [\bof_i][\bomega(m,p)]$ and  we prove that 
\begin{equation}\label{ekrsimp} [\bomega(m,i)][\bomega(i,p)] - [\bof_i][\bomega(m,p)] = [\bomega(m,i)']['\bomega(i,p)].
\end{equation}
For this we note that $\bomega_1\bomega_2 = \bomega(m,p)$ and hence, using part (i) of the theorem to the pair $(\bomega_1,\bomega_2)$ and Proposition \ref{csesf} or Lemma \ref{krout} to the pairs $(\bomega_1,\bomega_{i-1,\xi(i)})$ and $(\bomega_{i,a},\bomega(i,p))$ we get
\begin{gather*}  
[\bof_i]\left([\bomega(m,p)] +[\bomega_1']['\bomega_2]\right)+ \left( [\bomega(m,i)'] +[\bomega_1'][\bomega_{i,b}]\right)['\bomega(i,p)] \\
=[\bomega_1][\bof_i][\bomega_2] + [\bomega_{1}][\bomega_{i-1,\xi(i)}]['\bomega(i,p)]  = [\bomega_1][\bomega_{i,a}][\bomega(i,p)]\\ =
[\bomega(m,i)][\bomega(i,p)] + [\bomega_1'] [\bomega_{i+1,\xi(i)}][\bomega(i,p)].
\end{gather*}
Equating the first and last terms we see that \eqref{ekrsimp} follows  if we prove that
$$[\bof_i][\bomega_2']+[\bomega_{i,b}]['\bomega(i,p)] =[\bomega_{i+1,\xi(i)}][\bomega(i,p)].$$ But this follows from the cases of part (ii) of this theorem proved above. This completes the proof of part (ii).

\subsection{Proof of Theorem \ref{t:reduc}(iii)}\label{reducib}  
 We proceed by induction on $N=\Ht\bomega(i_1,i_3)+\Ht\bomega(i_2,i_4)$ with \cite{HL2} showing that induction begins when $N=4$.   Recall that $$\bomega(i_1, i_2)=\bomega_1\bomega_{i_2,a},\ \  \bomega(i_2,i_3)=\bomega_{i_2,b}\bomega\bomega_{i_3,c},\ \  \bomega(i_3,i_4)=\bomega_{i_3,d}\bomega_2.$$
 Set $$[\bof_{i_2,i_3}]=\prod_{s=i_2}^{i_3}[\bof_s]^{\delta_{\xi(s-1),\xi(s+1)}},$$ and note that $\bomega(i_2,i_4)=\bomega_{i_2,b}\bomega\bomega_{i_3,c}^{\delta_{c,d}}\bomega_2$.
 
 {\bf Case 1: $a=b$ or $c=d$} Suppose that $a=b$; the proof is similar when $c=d$. Then $\bomega_1\bomega(i_2,i_s)\in\bp\bor_\xi$ for $s=3,4$. Hence 
 \eqref{disja} gives
 \begin{eqnarray*}
 &[\bomega_1][\bomega(i_2,i_4)][\bomega(i_2,i_3)]&=
\left([\bomega(i_1,i_4)] +[\bomega_1']['\bomega(i_2,i_4)]\right)[\bomega(i_2,i_3)]\\ &&=
\left([\bomega(i_1,i_3)] + [\bomega_1']['\bomega(i_2,i_3)])\right)[\bomega(i_2,i_4)].
 \end{eqnarray*} 
 The result  follows if we prove that
\begin{equation*}\label{1} ['\bomega(i_2,i_3)][\bomega(i_2,i_4)] - ['\bomega(i_2,i_4)][\bomega(i_2,i_3)]=(-1)^{\Ht\bomegas(i_2,i_3)}[\bof_{i_2,i_3}]['\bomega_2]^{\delta_{c,d}}['\bomega(i_3,i_4))]^{1-\delta_{c,d}}.\end{equation*}
Note that we have the following possibilities for the pair $('\bomega(i_2,i_3), '\bomega(i_2,i_4))$:
$$(\bomega(i_2+1,i_3),\bomega(i_2+1,i_4)),\ \ (\bof_{i_2+1}\bomega(m,i_3), \bof_{i_2+1}(\bomega(m,i_4)),$$
$$(\bof_{i_2+1}\bomega_{i_3,c},\  \bof_{i_2+1}\bomega_{i_3,c}^{\delta_{c,d}}\bomega_2),\ \ (\bof_{i_2+1}, (\bof_{i_2+1}\bomega_2)^{\delta_{c,d}}\bomega(i_2+1,i_4)^{1-\delta_{c,d}}).$$
In the first case,  $\Ht\bomega(i_2,i_3)=\Ht\bomega(i_2+1,i_3)$,  $\Ht \bomega(i_2, i_3)<\Ht\bomega(i_1,i_3)$ the inductive hypothesis applies to $i_2<i_2+1<i_3<i_4$ and gives the result.  In the second case the inductive hypothesis applies to $i_2<m<i_3<i_4$ and gives the result. In the third case we use equations $(*)$ and  $(\dagger\dagger)$ of Theorem \ref{t:reduc}(ii) to get the result. In the fourth case we use Theorem \ref{t:reduc}(i) if $c=d$ and \eqref{minmaxkr} if $c\ne d$.

{\bf Case 2}. Assume that $a\ne b$ and $c\ne d$. Since $N\ge 5$ we may assume without loss of generality that $\Ht\bomega(i_1,i_3)\ge 3$.
If $\Ht\bomega(i_1,i_2)\ge 3$  let $i_1<j<i_2$ be minimal with $\xi(j-1)=\xi(j+1)$. We choose $z\in\bc(q)^\times$ so that   $\bomega(i_1,i_2)\bomega_{i_1,z}^{-1}\in\bp\bor_\xi$ and calculate $[\bomega_{i_1,z}]\left([\bomega(j,i_4)][\bomega(i_2,i_3)]-[\bomega(j,i_3)][\bomega(i_2,i_4)]\right)$ in two ways to  get two expressions for it; the first one by using the inductive hypothesis which shows that it is equal to $$(-1)^{\Ht\bomegas(i_2,i_3)}[\bomega_{i_1,z}][\bomega(j,i_2)'][\bof_{i_2,i_3}]['\bomega(i_3,i_4)]$$ and the second by using Theorem \ref{t:reduc}(i) on the pairs $(\bomega_{i_1,z},\bomega(j,i_s))$, $s=3,4$  which gives that it is equal to
\begin{eqnarray*}[\bomega(i_1,i_4)][\bomega(i_2,i_3)]-[\bomega(i_1,i_3)][\bomega(i_2,i_4)]+[\bomega_{i_1-1,\xi(i_1)}](['\bomega(j,i_4)][\bomega(i_2,i_3)]-['\bomega(j,i_3)][\bomega(i_2,i_4)]
\end{eqnarray*}
Hence the inductive step follows if we prove that 
\begin{gather*} \left([\bomega_{i_1,z}][\bomega(j,i_2)']-[\bomega(i_1,i_2)']\right)[\bof_{i_2,i_3}]['\bomega(i_3,i_4)]=\\ (-1)^{\Ht\bomegas(i_2,i_3)}[\bomega_{i-1,\xi(i)}]\left(['\bomega(j,i_4)][\bomega(i_2,i_3)]-['\bomega(j,i_3)][\bomega(i_2,i_4)]\right).\end{gather*}
This is proved by noting that $$\bomega(j,i_2)'=\bomega(j,i_2-1)^{(1-\delta_{j,i_2-1})(1-\delta_{\xi(i_2),\xi(i_2)-2})}(\bomega_3^{(1-\delta_{j,i_2-1})}\bof_{i_2-1})^{\delta_{\xi(i_2),\xi(i_2)-2}},
$$
 where $\bomega_1 = \bomega_3\bomega_{i_2-1,\xi(i_2)\pm 2}$ if $\xi(i_2)=\xi(i_2-2)$ and considering the different cases.  In each case, Theorem \ref{t:reduc}(i) applies to the left hand side while the induction hypothesis or Theorem \ref{t:reduc}(ii) applies to the right hand side and gives the answer. As an example suppose that $j=i_2-1$ and $\xi(i_2-2)=\xi(i_2)$. Then $\bomega(j,i_2)'=\bof_{j}$ and the minimality of $j$ shows that $\bomega(i_1,i_2)'=\bomega_{i_1,z}\bof_j$ and hence the left hand side is zero.  On the right hand side  since $\xi(i_2-1)\ne\xi(i_2+1)$ by assumption we get $'\bomega(j,i_s)=\bomega(i_2,i_s)$ and so the right hand side is zero as well. We omit the details in other cases.

Finally suppose that $j>i_2$  and  let $b'\in\bc(q)$ be such that $\{b,b'\} = \{\xi(i_2)+ 1,\xi(i_2)-1\}$; we have the following series of equalities.
\begin{gather*}
\left([\bomega(i_1,i_4)]+ [\bomega_{i_1-1,\xi(i_1)}]['\bomega(j,i_4)]\right) [\bomega(i_2,i_3)] + [\bomega(i_1,i_2)']['\bomega(j,i_3)][\bomega(j,i_4)]\\
=\left([\bomega_{i_1,a}]
[\bomega(i_2,i_3)]+ [\bomega(i_1,i_2)']['\bomega(j,i_3)]\right)[\bomega(j,i_4)]\\= [\bomega_{i_2,b'}][\bomega(i_1,i_3)][\bomega(j,i_4)]\\
= \left([\bomega(i_2,i_4)] + [\bomega_{i_2-1,\xi(i_2)}]['\bomega(j,i_4)]\right)[\bomega(i_1,i_3)]\\
=[\bomega(i_2,i_4)][\bomega(i_1,i_3)] + ['\bomega(j,i_4)]\left([\bomega(i_1,i_2)'][\bomega(j,i_3)] + [\bomega_{i_1-1,\xi(i_1)}][\bomega(i_2,i_3)]\right).
\end{gather*}
 where the first and third equality follow from applying \eqref{disja} to the pairs $(\bomega_{i_1,a},\bomega(j,i_4))$ and $(\bomega_{i_2,b'},\bomega(j,i_4))$, respectively, and the second and fourth equality follow  busing $(*), (**) $ of Theorem \ref{t:reduc}(ii) to $ (\bomega_{i_2,b'},\bomega(i_1,i_3))$ and $(\bomega_{i_2-1,\xi(i_2)},\bomega(i_1,i_3))$.  The inductive step follows by establishing 
 \begin{equation}\label{case3}(-1)^{\Ht\bomega(i_2,i_3)}[\bof_{i_2,i_3}]['\bomega(i_3,i_4)] = [\bomega(j,i_3)]['\bomega(j,i_4)] - ['\bomega(j,i_3)][\bomega(j,i_4)].
 \end{equation}
 The calculations are similar to the ones done so far and we omit further details.

\section{Proof of Proposition \ref{csesf}}\label{starti}

In this section we prove Proposition \ref{csesf} when $i<j$; the proof in the case $i>k$ is identical. We recall the statement of the proposition for the readers convenience.

\begin{prop}\label{csesfa} 
 Suppose that $\bomega_{ia}\bomega_{j,b}\bomega\in\bp\bor_\xi$ with $i< j<\min\bomega$ and set $\bpi=\bomega_{j,b}\bomega$.  We have  $$[\bomega_{i,a}][\bpi]-[\bomega_{i,a}\bpi]=[\bomega_{i-1,\xi(i)}] [\bomega_{j+1,\xi(j)}\bomega].$$
 \end{prop}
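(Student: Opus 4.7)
The plan is to induct on $\Ht\bomega$, where $\bomega$ is the ``tail'' of $\bpi=\bomega_{j,b}\bomega$. For the base case $\Ht\bomega=0$ we have $\bpi=\bomega_{j,b}$; the hypothesis $\bomega_{i,a}\bomega_{j,b}\in\bp\bor_\xi$ forces this product to be $\bomega(i,j)$, which means $\xi$ is strictly monotone on $[i-1,j+1]$, so that the signs of $a,b$ are forced and the desired identity is exactly equation \eqref{basecase}.

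For the inductive step $\Ht\bomega\ge 1$, I would factor $\bomega=\bomega_{m,c}\bomega'$ with $m=\min\bomega$ and compute the triple product $[\bomega_{i,a}][\bomega_{j,b}][\bomega]$ two ways. Way A expands $[\bomega_{i,a}][\bomega_{j,b}]$ via \eqref{basecase} and multiplies by $[\bomega]$. Way B applies the inductive hypothesis to $[\bomega_{j,b}][\bomega]$, which is a legitimate instance since $\bomega_{j,b}\bomega=\bpi\in\bp\bor_\xi$, $j<m=\min\bomega$, and the new tail $\bomega'$ has height $\Ht\bomega-1$. Equating Way A and Way B expresses $[\bomega_{i,a}][\bpi]$ as a combination of $[\bomega(i,j)][\bomega]$, $[\bomega_{i-1,\xi(i)}][\bomega_{j+1,\xi(j)}][\bomega]$, and $-[\bomega_{i,a}][\bomega_{j-1,\xi(j)}][\bomega_{m+1,\xi(m)}\bomega']$.

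To massage this into the target $[\bomega_{i,a}\bpi]+[\bomega_{i-1,\xi(i)}][\bomega_{j+1,\xi(j)}\bomega]$, I would apply the inductive hypothesis again to the pair $(\bomega_{j+1,\xi(j)},\bomega)$: this requires verifying, from the local $\xi$-picture at $j$ (where $\xi(j-1)=\xi(j+1)$), that $\bomega_{j+1,\xi(j)}\bomega$ coincides with $\bomega(j+1,N)\in\bp\bor_\xi$ for $N=\max\bomega$. I would also invoke \eqref{basecase} at the pair $(i,j-1)$ -- valid because $\xi(i)-\xi(j-1)=\mp(j-1-i)$ -- to rewrite $[\bomega_{i,a}][\bomega_{j-1,\xi(j)}]$ as $[\bomega(i,j-1)]+[\bomega_{i-1,\xi(i)}][\bomega_{j,\xi(j)-1}]$, and use Proposition \ref{braid} to identify the residual fundamental-module tensor products that occur as single irreducible classes. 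After careful cancellation in $\cal K_0(\cal F_\xi)$, the surviving terms match the target.

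The main obstacle is the boundary case $j+1=m$: a direct spectral-parameter check shows $\bomega_{j+1,\xi(j)}\bomega_{m,c}=\bof_m$, so $\bomega_{j+1,\xi(j)}\bomega$ no longer lies in $\bp\bor_\xi$ and the inductive hypothesis on that pair is unavailable. Here I would replace that step with an argument based on Theorem \ref{irredcrit}(a), which asserts that $[\bof_m]$ tensored with any prime HL-module is irreducible, combined with one more application of \eqref{basecase}. A secondary delicate point is that the chain of substitutions implicitly requires the ``other direction'' of the proposition (the case $i>k=\max\bpi$) applied to $(\bomega_{m,c},\bomega(i,j))$ at the same level of induction; I would handle this by setting the induction up to cover both directions simultaneously, using that the base case \eqref{basecase} is symmetric under the involution $\xi\leftrightarrow\xi^*$, which swaps the two directions, so the apparent circularity at a fixed height dissolves.
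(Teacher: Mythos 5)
Your proposal takes a genuinely different route from the paper's, and unfortunately it does not close. The paper proves Proposition~\ref{csesf} by a \emph{dimension} argument: Lemma~\ref{prime} together with the $q$-character computation of Section~\ref{lb} show that both $[\bomega_{i,a}\bpi]$ and $[\bomega_{i-1,\xi(i)}\bomega_{j+1,\xi(j)}\bomega]$ occur in the Jordan--H\"older series of $[\bomega_{i,a}]\otimes[\bpi]$, giving a dimension inequality $\geq$; the reverse inequality is then obtained by passing to classical limits via Theorem~\ref{bcm}, which identifies the relevant dimensions with dimensions of graded $\lie{sl}_{n+1}[t]$-modules (level-two Demazure modules $M(0,\lambda)$), and then by the fusion-product surjection in Proposition~\ref{reseq}, the right-exact sequence in Proposition~\ref{rses}, and Lemma~\ref{geq}, which together force $\dim M(\omega_i,\lambda)=\dim M(\omega_i,0)\dim M(0,\lambda)$. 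That dimension computation is the external input that breaks the cycle of Grothendieck-ring identities, and nothing in your proposal replaces it.

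Concretely, the chain of substitutions you describe leaves a nontrivial residue. Equating Way A and Way B, substituting the inductive hypothesis into $[\bomega_{j+1,\xi(j)}][\bomega]$ and \eqref{basecase} at $(i,j-1)$ into $[\bomega_{i,a}][\bomega_{j-1,\xi(j)}]$, and cancelling the common term $[\bomega_{i-1,\xi(i)}][\bomega_{j,\xi(j+1)}][{}'\bomega]$, one arrives at
$$[\bomega(i,j)][\bomega]+[\bomega_{i-1,\xi(i)}][\bomega_{j+1,\xi(j)}\bomega]=[\bomega_{i,a}][\bpi]+[\bomega(i,j-1)][{}'\bomega],$$
where ${}'\bomega=\bomega_{m+1,\xi(m)}\bomega'$ with $\bomega=\bomega_{m,c}\bomega'$. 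Hence the target equality is equivalent to the residual identity
$$[\bomega(i,j)][\bomega]-[\bomega(i,j)\bomega]=[\bomega(i,j-1)][{}'\bomega],$$
which is Theorem~\ref{t:reduc}(i) for the pair $(\bomega(i,j),\bomega)$ with $\Ht\bomega(i,j)=2$. Your proposed fix (the ``other direction'' of the proposition applied to $(\bomega_{m,c},\bomega(i,j))$, together with the $\xi\leftrightarrow\xi^*$ symmetry) is correct only when $\Ht\bomega=1$, i.e.\ $\bomega=\bomega_{m,c}$. When $\Ht\bomega\geq 2$ the residual identity is a genuine instance of Theorem~\ref{t:reduc}(i) with both factors of height at least~$2$, and the paper proves Theorem~\ref{t:reduc}(i) by induction on $\Ht\bpi_1$ whose base case and whose engine are exactly Proposition~\ref{csesf}; in particular the very first inductive step for our residual instance invokes Proposition~\ref{csesf} applied to $(\bomega_{i,a},\bpi)$, which is precisely the statement you are proving. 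The involution $\xi\leftrightarrow\xi^*$ exchanges the two directions (left versus right factor) but does not reduce $\Ht\bpi_1$, so the circularity at fixed total height persists. The underlying obstruction is that pure Grothendieck-ring manipulations can only relate the various products to one another; they cannot by themselves bound the Jordan--H\"older length of $[\bomega_{i,a}]\otimes[\bpi]$ from above, and it is exactly this upper bound that the paper's classical-limit dimension argument supplies.
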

 
 We make some preliminary remarks about the proof.
Recall from Lemma \ref{prime} that  for all $\bomega\in\bp\bor_\xi$ the module $[\bomega]$ is prime, i.e., that it cannot be written as a tensor product of non-trivial finite--dimensional representations of $\hat\bu_q$. It follows that the module $[\bomega_{i,a}\bpi]$ is a proper subquotient of $[\bomega_{i,a}]\otimes[\bpi]$.
 \medskip
 
 We claim that  $[\bomega_{i-1,\xi(i)}] \otimes [\bomega_{j+1,\xi(j)}\bomega] $ is irreducible. The condition  $\bomega_{i,a}\bomega_{j,b}\bomega\in\bp\bor_\xi$ forces   $\xi(j-1)=\xi(j+1)$ and hence   $$j+1<\min\bomega\implies \bomega_{i-1,\xi(i)}\bomega_{j+1,\xi(j)}\bomega\notin\bp\bor_\xi$$ and the claim follows from Theorem \ref{irredcrit}(b)(i). Otherwise, we have  $$j+1=\min\bomega\implies \bomega_{j+1,\xi(j)}\bomega=\bof_{j+1}\bomega',\ \ \bomega'\in\bp\bor_\xi\cup\{1\},$$   Theorem \ref{irredcrit} (a) gives  $$[\bomega_{j+1,\xi(j)}\bomega']=[\bomega']\otimes [\bof_{j+1}],\ \ [\bomega_{i-1,\xi(i)}]\otimes [\bof_{j+1}]=[\bomega_{i-1,\xi(i)}\bof_{j+1}],$$
 while  (b)(i), $$[\bomega_{i-1,\xi(i)}]\otimes[\bomega']=[\bomega_{i-1,\xi(i)}\bomega'].$$ An application of Theorem \ref{pairs} now proves the claim in this case.
 \medskip
 
 In the first part of this section we shall show that $[\bomega_{i-1,\xi(i)}] \otimes [\bomega_{j+1,\xi(j)}\bomega] $  is also a subquotient of $[\bomega_{i,a}]\otimes[\bomega_{j,b}\bomega]$; in particular  $$\dim[\bomega_{i,a}]\dim[\bomega_{j,b}\bomega]\ge \dim[\bomega_{i,a}\bomega_{j,b}\bomega]+\dim [\bomega_{i-1,\xi(i)}] \dim [\bomega_{j+1,\xi(j)}\bomega].$$
The proposition clearly follows if we prove the reverse inequality. This is done by  using  a presentation of the graded limit of the modules $[\bpi]$, $\bpi\in\bp\bor_\xi$  given 
 in \cite{BCM} along with  some additional results in the representation theory of current algebras.

 \subsection{}\label{lb} The proof of the next result is   an elementary application of $q$--character theory for quantum affine algebras. \begin{lem} The module $[\bomega_{i-1,\xi(i)}\bomega_{j+1,\xi(j)}\bomega]$ occurs in the Jordan--Holder series of $[\bomega_{i,a}]\otimes [\bomega_{j,b}\bomega].$\end{lem}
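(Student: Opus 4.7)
The plan is to use the theory of $q$-characters. Recall that $\chi_q$ is an injective ring homomorphism $\cal K_0(\cal F)\to\bz[Y_{k,c}^{\pm1}]$ and that for each irreducible $[\bpi]$ with $\bpi=\prod \bomega_{k_l,c_l}$, the Laurent polynomial $\chi_q([\bpi])$ contains the dominant monomial $m_\bpi=\prod Y_{k_l,c_l}$ with coefficient one, and all other monomials are strictly lower in the natural partial order. By a standard triangular inversion (Frenkel--Mukhin), every dominant monomial that occurs in the $q$-character of a finite--dimensional module forces a corresponding simple module to appear in its Jordan--Hölder series. So I would begin by identifying the target dominant monomial
\[
m_* \;=\; Y_{i-1,\xi(i)}\,Y_{j+1,\xi(j)}\,m(\bomega),
\]
where $m(\bomega)$ is the dominant monomial of $[\bomega]$, and reducing the problem to showing that $m_*$ appears with positive coefficient in the product $\chi_q([\bomega_{i,a}])\cdot\chi_q([\bomega_{j,b}\bomega]) = \chi_q([\bomega_{i,a}]\otimes[\bomega_{j,b}\bomega])$.

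The main computation is to exhibit an explicit factorisation $m_*=m_1 m_2$ with $m_1\in\chi_q([\bomega_{i,a}])$ and $m_2\in\chi_q([\bomega_{j,b}\bomega])$. For $m_1$, I would use the fact that $[\bomega_{i,a}]=L(Y_{i,a})$ is a minuscule fundamental module of type $A_n$, whose $q$-character is completely explicit: applying the sequence of Frenkel--Mukhin mutations $A_{i,a+1}^{-1}A_{i+1,a+2}^{-1}\cdots A_{j-1,a+j-i}^{-1}$ to the highest monomial $Y_{i,a}$ gives a valid monomial in $\chi_q([\bomega_{i,a}])$, which simplifies to
\[
m_1 \;=\; Y_{i-1,a+1}\,Y_{j,a+j-i}\,Y_{j-1,a+j-i+1}^{-1}.
\]
The hypothesis $\bomega_{i,a}\bomega_{j,b}\bomega\in\bp\bor_\xi$ together with the alternation convention built into the definition of $\bomega(i_1,i_k)$ forces the signs $a=\xi(i)\pm 1$, $b=\xi(j)\mp 1$, and also the constraint $\xi(j-1)=\xi(j+1)$; feeding these values into the spectral parameters of $m_1$ makes the $Y_{i-1,\cdot}$ factor collapse to $Y_{i-1,\xi(i)}$ and makes the remaining factors $Y_{j,\cdot}Y_{j-1,\cdot}^{-1}$ line up exactly with one mutation of $Y_{j,b}$ from inside $\chi_q([\bomega_{j,b}\bomega])$. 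Thus the complementary $m_2=m_*/m_1$ equals $(Y_{j,b}\cdot A_{j,\cdot}^{-1})\cdot m(\bomega)$ times the appropriate cancellation, and the fact that this is a monomial of $\chi_q([\bomega_{j,b}\bomega])$ can be verified either directly or by a short induction on $\Ht\bomega$, using that the $q$-character of a product of $\bomega$'s factors (up to lower-order corrections) as a sum indexed by the tableaux of the individual fundamental modules.

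Once $m_*$ is shown to appear in $\chi_q([\bomega_{i,a}]\otimes[\bomega_{j,b}\bomega])$, the triangular nature of $\chi_q$ on simples implies that $L(m_*)=[\bomega_{i-1,\xi(i)}\bomega_{j+1,\xi(j)}\bomega]$ must be a Jordan--Hölder constituent, since no simple whose highest monomial strictly dominates $m_*$ contributes $m_*$ to its $q$-character with large enough multiplicity to absorb it (one checks this on $[\bomega_{i,a}\bomega_{j,b}\bomega]$, the unique irreducible quotient, using the known structure of $\chi_q$ for $\ell$-highest-weight tensor products). The main obstacle will be the careful bookkeeping of spectral parameters at each step of the mutation path and the verification that each intermediate monomial stays inside the $q$-character of the relevant factor; the signs $\pm 1$ of $a$ and $b$ dictated by the alternation convention in $\bp\bor_\xi$, together with the identity $\xi(j-1)=\xi(j+1)$, are precisely what makes the computation close up.
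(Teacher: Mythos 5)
Your proposal is correct and follows essentially the same route as the paper, which also argues via $q$-characters that the dominant $\ell$-weight $\bomega_{i-1,\xi(i)}\bomega_{j+1,\xi(j)}\bomega$ occurs in the tensor product but not in the top constituent $[\bomega_{i,a}\bpi]$, and then concludes that the corresponding simple module is a Jordan--H\"older factor. The paper omits the details as "routine"; your explicit factorisation of the target monomial through the Frenkel--Mukhin mutations of the fundamental factor is exactly the bookkeeping being omitted.
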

 \begin{pf} It suffices to show that there exists an $\ell$--highest weight vector with $\ell$--highest weight  $\bomega_{i-1,\xi(i)}\bomega_{j+1,\xi(j)}\bomega$ in $[\bomega_{i,a}]\otimes [\bpi]$ (resp. $[\bpi]\otimes [\bomega_{i,a}]$) if $a=\xi(i)+1$ (resp. $a=\xi(i)-1$). \vskip6pt
 
 \noindent But this is true by a routine argument using $q$--characters.  Namely one observes that the element $\bomega_{i-1,\xi(i)}\bomega_{j+1,\xi(j)}\bomega$ is an $\ell$-weight of $[\bomega_{i,a}]\otimes [\bpi]$ but not of $[\bomega_{i,a}\bpi]$.  It is then elementary \vskip 6pt
 
 \noindent to see  that the corresponding eigenvector is necessarily highest weight. We omit the details.
 \end{pf}

\subsection{} We need some standard notation from the theory of  simple Lie algebras. Thus, $\lie h$  denotes a Cartan subalgebra of $\lie{sl}_{n+1}$,   $\{\alpha_i: 1\le i\le n\}$  a set of simple roots for $(\lie{sl}_{n+1},\lie h)$ and  $R^+=\{\alpha_{i,j}:=\alpha_i+\cdots+\alpha_j: 1\le i\le j\le n\}$  the corresponding set of positive roots.  Fix a Chevalley basis $x_{i,j}^\pm$, $1\le i\le j\le n$ and $h_j$, $1\le j\le n$  for $\lie{sl}_{n+1}$. Set $x_{j,j}^\pm=x_j^\pm$ and $h_{i,j}=h_i+\cdots+h_j$ for all $1\le i\le j\le n$.
\medskip

As in the earlier sections $P^+$ will be the set of dominant integral weights corresponding to a set $\{\omega_i: 1\le i\le n\}$ of fundamental weights and we set $$P^+(1)=\{\lambda\in P^+:\lambda(h_i)\le 1, \ 1\le i\le n\}.$$
For $\lambda\in P^+$ let $V(\lambda)$ be an  irreducible  finite dimensional $\lie {sl}_{n+1}$ with highest weight $\lambda$.

Let $t$ be an indeterminate and   $\bc[t]$ the corresponding polynomial ring with complex coefficients. 
Denote by  $\lie{sl}_{n+1}[t]$ the Lie algebra  with underlying vector space $\lie{sl}_{n+1}\otimes \bc[t]$ and commutator given by $$[a\otimes f, b\otimes g]=[a,b]\otimes fg,\ \ a,b\in\lie{sl}_{n+1},\ \ f,g\in\bc[t].$$ Then $\lie{sl}_{n+1}[t]$ and its universal enveloping algebra admit a natural $\bz_+$-grading given by declaring a monomial $(a_1\otimes t^{r_1})\cdots (a_p\otimes t^{r_p})$ to have grade $r_1+\cdots +r_p$, where $a_s\in\lie{sl}_{n+1}$ and $r_s\in\bz_+$ for $1\le s\le p$.
 
 \subsection{} We shall be interested in the category of $\bz_+$--graded modules for $\lie{sl}_{n+1}[t]$. An object of this category is a  module $V$ for $\lie{sl}_{n+1}[t]$ which admits a compatible  $\bz$--grading, i.e., $$V=\bigoplus_{s\in\bz} V[s],\ \ (x\otimes t^r)V[s]\subset V[r+s],\ \ x\in\lie{sl}_{n+1},\ \ r\in\bz_+.$$ For any $p\in\bz$ we let $\tau_p^*V$ be the graded module which  given by shifting the grades  up by $p$ and leaving the action of $\lie{sl}_{n+1}[t]$ unchanged.
The morphisms between graded modules are $\lie{sl}_{n+1}[t]$- maps  of grade zero.  A $\lie{sl}_{n+1}$--module $M$  will  be regarded as an object  (denoted $\ev_0^*M$) of this category by placing $M$ in degree zero  and requiring that $$(a\otimes t^r) m=\delta_{r,0}am,\ \ a\in\lie{sl}_{n+1},\ \ m\in M\ \ r\in\bz_+.$$

\noindent  For $\lambda\in P^+$, the local Weyl module $W_{\loc}(\lambda)$ is the $\lie{sl}_{n+1}[t]$--module generated by an element $w_\lambda$ with graded  defining relations:  \begin{equation}\label{localweyld} (x_i^+\otimes 1)w_\lambda=0,\ \ (h\otimes t^r)w_\lambda=\delta_{r,0}\lambda(h)w_\lambda,\ \ \  (x_i^-\otimes 1)^{\lambda(h_i)+1}w_\lambda=0,\end{equation}  where  $1\le i\le n, $ and $r\in\bz_+$. Define a grading on $W_{\loc}(\lambda)$ by requiring $\gr w_\lambda=0$. 
 It is straightforward to see that $$W(\omega_i)\cong_{\lie{sl}_{n+1}} V(\omega_i),\ \ 1\le i\le n.$$
 In general $W_{\loc}(\lambda)$ has a unique graded irreducible quotient which is isomorphic to  $\ev_0^*V(\lambda)$. It is obtained by imposing the additional relation $(x^-_\alpha\otimes t)w_\lambda=0$ for all $\alpha\in R^+$.
 
\subsection{}   Given $\mu\in P^+(1)$ set  \begin{gather*}\min\mu=\min\{i:\mu(h_i)=1\},\\ 
 R^+(\mu)=\{\alpha_{i,j}\in R^+: 1\le i<j\le n,\ \  \mu(h_i)=1=\mu(h_j)\ \ {\rm{and}}\ \ \mu(h_{i,j})=2\}.\end{gather*} Given $\lambda=2\lambda_0+\lambda_1\in P^+$ with $\lambda_0\in P^+$ and $\lambda_1\in P^+(1)$ and $0\le i<\min\lambda_1$, define
  $M(\omega_i,\lambda)$ to be the graded $\lie{sl}_{n+1}[t]$-module  generated by an element $m_{i,\lambda}$ of grade zero   satisfying the graded relations in \eqref{localweyld} and  \begin{equation}\label{mil} (x^-_p\otimes t^{(\lambda_0+\lambda_1+\omega_i)(h_p)})m_{i,\lambda}=0=(x_\alpha^-\otimes  t^{\lambda_0(h_\alpha)+1})m_{i,\lambda},\ \ 1\le p\le n,\ \alpha\in R^+(\lambda_1).\end{equation}
  Clearly $M(\omega_i,\lambda)$ is a graded quotient of $W_{\loc}(\lambda)$ 
 and $$M(0,\omega_i)\cong_{\lie {sl}_{n+1}[t]} M(\omega_i,0)\cong_{\lie {sl}_{n+1}[t]} W_{\loc}(\omega_i)\cong_{\lie sl_{n+1}} V(\omega_i).$$ 
 If $\lambda_1\ne 0$ and $i_1=\min\lambda_1$, then $R^+(\lambda_1+\omega_i)= R^+(\lambda_1)\cup\{\alpha_{i,i_1}\}$ and it  is simple to check  that the assignment $m_{i,\lambda}\to m_{0,\lambda+\omega_{i}}$ gives rise to the following short exact sequence of $\lie {sl}_{n+1}[t]$--modules \begin{equation}\label{ses1}0\to \bu(\lie g[t])(x_{i, i_1}^-\otimes t^{\lambda_0(h_{i,i_1})+1})m_{i,\lambda}\to  M(\omega_i,\lambda)\to M(0,\lambda+\omega_{i})\to 0.\end{equation}
 The modules $M(0,\lambda)$, $\lambda\in P^+$  are examples of  level two Demazure modules; the latter  have been studied extensively and  are usually denoted as $D(2,\lambda)$ in the literature. We now state a  result which relates modules for the quantum affine algebra which are defined over $\bc(q)$ and modules for $\lie{sl}_{n+1}[t]$ which are defined over $\bc$. Denote by $\dim_{\bc(q)} V$ the dimension of a module $V$  for the quantum affine algebra and by $\dim M$ the dimension over $\bc$  of a module $M$ for $\lie{sl}_{n+1}[t]$.
 
 Part (i)  of the following result was proved in  \cite[Theorem 1]{CSVW} and parts (ii) and (iii) were proved in \cite[Theorem 1]{BCM}.  
\begin{thm}\label{dimprod}\label{bcm}
\begin{enumerit}
\item[(i)] Let $\mu\in P^+(1)$, $\nu_1, \nu\in  P^+$ with $\nu-\nu_1\in P^+$. Then \begin{eqnarray*}&\dim M(0, 2\nu)\dim M(0,\mu)&= \dim M(0, 2\nu+\mu)\\&&=\dim M(0, 2\nu_1)\dim M(0, 2(\nu-\nu_1)+\mu).\end{eqnarray*}
\item[(ii)]  Let $\xi: I\to\bz$ be an arbitrary height function and $\bpi\in\bp\bor_\xi$. We have $$\dim  M(0, \wt\bpi)=\dim_{\bc(q)} [\bpi].$$
\item[(iii)] For all $1\le p\le n$ we have $\dim M(0,2\omega_p)=\dim_{\bc(q)}[\bof_p]$.\end{enumerit} \hfill\qedsymbol
\end{thm}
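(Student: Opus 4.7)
The plan is to prove the three parts by exploiting the interplay between level-two Demazure modules for the affine Lie algebra, graded limits of quantum affine modules, and the combinatorics of the height function $\xi$.

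For part (i), the key observation is that $M(0,\lambda)$ is precisely a level-two Demazure module $D(2,\lambda)$. I would invoke the tensor product theorem for such Demazure modules in type A: when $\lambda = \lambda' + \lambda''$ with $\lambda' \in 2P^+$, the module $D(2,\lambda)$ admits an excellent filtration whose associated graded is isomorphic (as an $\lie{sl}_{n+1}$-module) to $D(2,\lambda') \otimes D(2,\lambda'')$, and in particular its character and dimension factor accordingly. Both decompositions appearing in the statement, namely $2\nu+\mu = (2\nu)+\mu$ and $2\nu+\mu = (2\nu_1)+(2(\nu-\nu_1)+\mu)$, split off an element of $2P^+$, so the hypothesis is satisfied and both dimension equalities follow. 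The verification of the tensor product theorem proceeds either by constructing explicit Demazure crystal isomorphisms, or by showing that the defining relations of $M(0,\lambda)$ split compatibly with the proposed tensor decomposition.

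For parts (ii) and (iii), the strategy is to pass to the \emph{graded (classical) limit} of the quantum affine modules $[\bpi]$ and $[\bof_p]$ and identify them with the appropriate current-algebra modules. Concretely, fix an integral form of $[\bpi]$ over $\bz[q,q^{-1}]$, specialize at $q=1$, and twist by an automorphism of $\lie{sl}_{n+1}[t]$ concentrating the spectral parameters so as to produce a $\bz_+$-graded $\lie{sl}_{n+1}[t]$-module $L(\bpi)$ with $\dim L(\bpi) = \dim_{\bc(q)}[\bpi]$. The proof of (ii) is then a two-sided bound:
\begin{itemize}
\item The image in $L(\bpi)$ of an $\ell$-highest weight vector of $[\bpi]$ is a cyclic vector of classical weight $\wt\bpi$ satisfying the relations \eqref{localweyld} and \eqref{mil}, yielding a surjection $M(0,\wt\bpi) \twoheadrightarrow L(\bpi)$ and hence $\dim M(0,\wt\bpi) \ge \dim[\bpi]$.
\item Conversely, a character/$q$-character comparison (or an induction using the filtration from part (i), together with the presentation \eqref{ses1}) gives $\dim M(0,\wt\bpi) \le \dim[\bpi]$.
\end{itemize}
Part (iii) is the analogous special case for $[\bof_p]$, where the graded limit can be identified with $D(2,2\omega_p)=M(0,2\omega_p)$ either directly or via a short exact sequence coming from its presentation as a tensor product of two conjugate fundamental Kirillov--Reshetikhin modules.

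The main obstacle is the verification of the precise Garland-type nilpotency relations $(x_p^- \otimes t^{(\lambda_0+\lambda_1+\omega_i)(h_p)})v = 0$ and $(x_\alpha^- \otimes t^{\lambda_0(h_\alpha)+1})v = 0$ in the graded limit. Lifting these to the quantum affine setting requires translating the $\ell$-weight support of $[\bpi]$ into exact nilpotency orders of affine root vectors, and the fact that the exponents match the ones in \eqref{mil} on the nose is entirely controlled by the combinatorics of $\xi$ and of the product $\bpi = \bomega_{i_1,a_1}\cdots\bomega_{i_k,a_k}$ defining an element of $\bp\bor_\xi$. This is the technical heart of \cite{BCM} and is where the specific structure of the HL-prime labels (rather than arbitrary Drinfeld polynomial data) is crucially used.
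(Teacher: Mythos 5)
You should first note that the paper does not prove Theorem \ref{bcm} at all: part (i) is quoted from \cite[Theorem 1]{CSVW} and parts (ii), (iii) from \cite[Theorem 1]{BCM}, and the statement is closed with a tombstone without argument. Measured against that, your proposal is a reasonable high-level reconstruction of how those references actually argue, rather than a different route: for (i) the factorization of the level-two Demazure module $M(0,\lambda)=D(2,\lambda)$ is proved in \cite{CSVW} via fusion products and the defining relations (not via excellent filtrations or Demazure crystal isomorphisms, though the dimension consequence you want is the same, and the second equality does follow by splitting off $2\nu_1$ and refactoring); for (ii) and (iii) the graded-limit strategy with a two-sided dimension bound — a surjection from the module presented by \eqref{localweyld} and \eqref{mil} onto the graded limit of $[\bpi]$, plus a converse inequality — is precisely the shape of the argument in \cite{BCM}. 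The caveat is that, as a standalone proof, your text establishes neither of the two decisive facts: the fusion/tensor factorization underlying (i), nor the verification that the graded limit of an HL-module satisfies the relations with exactly the stated nilpotency exponents together with the reverse bound $\dim M(0,\wt\bpi)\le \dim_{\bc(q)}[\bpi]$; you say so yourself when you defer ``the technical heart'' to \cite{BCM}. Since the paper likewise treats these as imported results, your sketch is on par with the paper's treatment, but you should present it as an outline of the cited proofs rather than as a self-contained proof.
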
 

\begin{cor} Let $\bomega_{j,b}\bomega\in\bp\bor_\xi$ with $j<k=\min\bomega$. We have $$\dim_{\bc(q)}[\bomega_{j+1,\xi(j)}\bomega] =\dim M(0, \omega_{j+1}+\wt\bomega).$$\end{cor}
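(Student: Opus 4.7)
Write $\bomega_{j,b}\bomega=\bomega(j,l)\in\bp\bor_\xi$, so that $\bomega=\bomega_{i_2,a_2}\cdots\bomega_{i_p,a_p}$, where $k=i_2<i_3<\cdots<i_p=l$ consists of the diamond positions strictly between $j$ and $l$ together with $l$ itself. My plan is to split according to whether $j+1<k$ or $j+1=k$ and to reduce each case to Theorem \ref{bcm}.

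Case 1 ($j+1<k$): The goal is to show $\bomega_{j+1,\xi(j)}\bomega=\bomega(j+1,l)\in\bp\bor_\xi$, whence the identity follows directly from Theorem \ref{bcm}(ii) applied at weight $\omega_{j+1}+\wt\bomega$. Since none of the indices $j+1,\ldots,k-1$ appears in the support of $\bomega(j,l)$, none of them is a diamond position; so the diamond positions strictly between $j+1$ and $l$ coincide with those strictly between $j$ and $l$, and this forces the factors $\bomega_{i_m,a_m}$ for $m\ge 2$ in $\bomega(j+1,l)$ to match those in $\bomega(j,l)$. It remains to verify that the leading parameter of $\bomega(j+1,l)$ is exactly $\xi(j)$. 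This is a direct reading of the definition: since $j+1$ is not a diamond, $\xi(j)\ne\xi(j+2)$, so these two values fill the set $\{\xi(j+1)+1,\xi(j+1)-1\}$, and the rule assigning $a_1=\xi(j+1)\pm 1$ with sign opposite to $\xi(j+2)-\xi(j+1)$ produces precisely $\xi(j)$.

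Case 2 ($j+1=k$): Let $b'$ denote the parameter $a_2$ of $\bomega(j,l)$ at position $k$, and write $\bomega=\bomega_{k,b'}\bomega''$ with $\bomega''\in\bp\bor_\xi\cup\{1\}$. Using the diamond condition $\xi(k-1)=\xi(k+1)$ together with $\xi(j)=\xi(k-1)$, one checks that the value of $\xi(j)$ and the definition of $b'$ realize the two elements of $\{\xi(k)+1,\xi(k)-1\}$; hence $\bomega_{k,\xi(j)}\bomega_{k,b'}=\bof_k$ and consequently $\bomega_{j+1,\xi(j)}\bomega=\bof_k\bomega''$. By Theorem \ref{irredcrit}(a), $[\bof_k]\otimes[\bomega'']\cong[\bof_k\bomega'']$, so
$$\dim_{\bc(q)}[\bomega_{j+1,\xi(j)}\bomega]=\dim_{\bc(q)}[\bof_k]\cdot\dim_{\bc(q)}[\bomega'']=\dim M(0,2\omega_k)\cdot\dim M(0,\wt\bomega'')$$
by Theorem \ref{bcm}(iii),(ii). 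Finally, Theorem \ref{bcm}(i) with $\nu=\omega_k$ and $\mu=\wt\bomega''\in P^+(1)$ converts the right-hand side into $\dim M(0,2\omega_k+\wt\bomega'')=\dim M(0,\omega_{j+1}+\wt\bomega)$, as required.

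The only substantive content is combinatorial: in Case 1, recognizing that $\bomega_{j+1,\xi(j)}\bomega$ is literally an element of $\bp\bor_\xi$ (so no dimension factorization identity for Demazure modules beyond what is in Theorem \ref{bcm}(i) is needed), and in Case 2 recognizing that the two factors at position $k$ combine into $\bof_k$; both facts are immediate from parsing the definition of $\bomega(j,l)$ at a non-diamond versus a diamond index. Everything else is an invocation of the already-established results.
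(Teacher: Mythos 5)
Your proposal is correct and follows essentially the same route as the paper: split on $j+1<k$ versus $j+1=k$, apply Theorem \ref{bcm}(ii) directly in the first case, and in the second case factor out $\bof_{j+1}$, use Theorem \ref{irredcrit}(a) together with Theorem \ref{bcm}(ii),(iii), and finish with the Steinberg-type factorization of Theorem \ref{bcm}(i). The only differences are cosmetic: you spell out the combinatorial verification that $\bomega_{j+1,\xi(j)}\bomega=\bomega(j+1,l)\in\bp\bor_\xi$ (which the paper treats as immediate), and in Case 2 your appeal to the diamond condition $\xi(k-1)=\xi(k+1)$ should be replaced by the direct sign computation from the definition of $a_2$ when $k=l=j+1$, since $k$ need not be a diamond there --- but the conclusion $\bomega_{k,\xi(j)}\bomega_{k,b'}=\bof_k$ still holds.
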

\begin{pf} If $j+1\ne k$ then  $\bomega_{j+1,\xi(j)}\bomega\in\bp\bor_\xi$ and  the corollary is immediate from Theorem \ref{bcm}(ii).
Suppose that  $j+1=k$. If $\bomega_{j+1,\xi(j)}\bomega=\bof_{j+1}$ then the assertion of the corollary is just  Theorem \ref{bcm}(iii).
Otherwise
$$\bomega_{j+1,\xi(j)}\bomega=\bof_{j+1}\bomega',\ \ \bomega'\in\bp\bor_\xi,\ \  \wt\bomega'=\wt\bomega-\omega_{j+1}.$$
  Theorem \ref{irredcrit}(a) gives
$[\bomega_{j+1,\xi(j)}\bomega]=[\bof_{j+1}][\bomega']$. Together with 
  parts (ii) and (iii) of Theorem \ref{dimprod}  we get  $$\dim_{\bc(q)}[\bomega_{j+1,\xi(j)}\bomega]= \dim_{\bc(q)}[\bof_{j+1}]\dim_{\bc(q)}[\bomega']=\dim M(0,2\omega_{j+1})\dim M(0,\wt\bomega-\omega_{j+1}).$$ Now using part (i) of the theorem we see that the right hand side is $\dim M(0,\omega_{j+1}+\wt\bomega)$ and the corollary is established.
\end{pf}
Along with  Section \ref{lb} we have now established  the following inequality. Let $\bomega_{i,a}\bomega_{j,b}\bomega\in\bp\bor_\xi$ with $i<j<\min\bomega$. Then
\begin{multline}\label{geqxi} \dim M(\omega_i,0)\dim M(0,\omega_j+\wt\bomega)\ge  \dim M(0,\wt\bomega+\omega_i+\omega_j) \\  + \dim M(0,\omega_{i-1})\dim M(0,\omega_{j+1}+\wt\bomega).\end{multline}
and  Proposition \ref{csesfa} follows if we prove that the preceding inequality  is actually an equality. This is done in the rest of the section.

 \subsection{} We deduce a consequence of the preceding discussion.
\begin{lem}\label{geq}
Let $\lambda_0\in P^+$, $\lambda_1\in P^+(1)$, $\lambda=2\lambda_0+\lambda_1$ and $1\le i<i_1=\min\lambda_1$.
Then \begin{eqnarray*}\notag&\dim M(\omega_i,0)\dim M(0,\lambda)&\ge  \dim M(0,\lambda+\omega_i) + \dim M(0,\omega_{i-1})\dim M(0,\omega_{i_1+1}+\lambda-\omega_{i_1}).\end{eqnarray*}

\end{lem}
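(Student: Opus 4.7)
The plan is to apply \eqref{geqxi} with $\lambda_1$ playing the role of the weight $\omega_j+\wt\bomega$, and then to boost the result to the target inequality by multiplying both sides by $\dim M(0,2\lambda_0)$ and recombining products of the form $\dim M(0,\mu)\,\dim M(0,2\lambda_0)$ via Theorem \ref{dimprod}(i).

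Writing $\lambda_1=\omega_{i_1}+\omega_{i_2}+\cdots+\omega_{i_k}$ with $i_1<i_2<\cdots<i_k$, I first choose a height function $\xi$ whose set of local extrema in $(i,i_k)$ is exactly $\{i_1,\dots,i_{k-1}\}$; in the degenerate case $k=1$ I take $\xi$ to be monotonic on $[i,i_1]$. Under this choice, $\bomega(i_1,i_k)\in\bp\bor_\xi$ has weight $\lambda_1$ and $\bomega_{i,a}\bomega(i_1,i_k)=\bomega(i,i_k)\in\bp\bor_\xi$ (the case $k=1$ specializing to $\bomega_{i_1,b}$ and $\bomega(i,i_1)$ with trivial $\bomega$). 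Applying \eqref{geqxi} with $j=i_1$ then gives
\begin{equation*}
\dim M(\omega_i,0)\,\dim M(0,\lambda_1)\;\ge\;\dim M(0,\lambda_1+\omega_i)+\dim M(0,\omega_{i-1})\,\dim M(0,\omega_{i_1+1}+\lambda_1-\omega_{i_1}).
\end{equation*}

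Multiplying this by $\dim M(0,2\lambda_0)$, the LHS and first RHS term follow by a single application of Theorem \ref{dimprod}(i): since $\lambda_1\in P^+(1)$, $\dim M(0,\lambda_1)\,\dim M(0,2\lambda_0)=\dim M(0,\lambda)$, and since $i<i_1$ forces $\lambda_1+\omega_i\in P^+(1)$, $\dim M(0,\lambda_1+\omega_i)\,\dim M(0,2\lambda_0)=\dim M(0,\lambda+\omega_i)$. The main obstacle is the last factor: when $i_2=i_1+1$ the weight $\omega_{i_1+1}+\lambda_1-\omega_{i_1}$ has $\omega_{i_1+1}$ with coefficient $2$ and is therefore \emph{not} in $P^+(1)$, so the single-step form of Theorem \ref{dimprod}(i) does not apply directly. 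To handle this case I would split $\omega_{i_1+1}+\lambda_1-\omega_{i_1}=2\omega_{i_1+1}+\lambda_1''$ with $\lambda_1'':=\lambda_1-\omega_{i_1}-\omega_{i_1+1}\in P^+(1)$, and apply Theorem \ref{dimprod}(i) twice: first to factor $\dim M(0,2\omega_{i_1+1}+\lambda_1'')=\dim M(0,2\omega_{i_1+1})\,\dim M(0,\lambda_1'')$, and then in its middle form (with $\nu=\lambda_0+\omega_{i_1+1}$, $\nu_1=\lambda_0$, $\mu=\lambda_1''$) to obtain $\dim M(0,2\lambda_0)\,\dim M(0,2\omega_{i_1+1})\,\dim M(0,\lambda_1'')=\dim M(0,2(\lambda_0+\omega_{i_1+1})+\lambda_1'')=\dim M(0,\omega_{i_1+1}+\lambda-\omega_{i_1})$; when $i_2\ne i_1+1$ (including $k=1$) a single step of Theorem \ref{dimprod}(i) yields the same identity directly. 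Substituting these three identifications back into the boosted inequality produces exactly the lemma's claim.
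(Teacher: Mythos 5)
Your proof is correct and takes essentially the same route as the paper: both reduce to $\lambda_0=0$ via Theorem~\ref{dimprod}(i), construct a height function $\xi$ whose turning points in $(i,i_k)$ realize $\lambda_1$ as $\wt\bpi$ for an element $\bomega_{i,a}\bpi\in\bp\bor_\xi$, and then invoke \eqref{geqxi}. The only difference is that you explicitly spell out the two-step application of Theorem~\ref{dimprod}(i) needed when $i_2=i_1+1$ (so that $\lambda_1+\omega_{i_1+1}-\omega_{i_1}\notin P^+(1)$), a case the paper subsumes in a single citation of Theorem~\ref{bcm}(i).
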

\begin{pf} By Theorem \ref{bcm}(i) we see that for $\mu\in\{\lambda,\lambda+\omega_i, \lambda+\omega_{i_1+1}-\omega_{i_1}\}$ we can write $$\dim M(0,\mu)=\dim M(0,2\lambda_0)\dim M(0,\mu-2\lambda_0).$$Hence  the Lemma follows if we prove that 
\begin{eqnarray*}\notag&\dim M(\omega_i,0)\dim M(0,\lambda_1)&\ge  \dim M(0,\lambda_1+\omega_i) + \dim M(0,\omega_{i-1})\dim M(0,\omega_{i_1+1}+\lambda_1-\omega_{i_1}).\end{eqnarray*}
Comparing this with \eqref{geqxi} we see that it suffices to prove that we can find a height function $\xi$ such that there exists an element $\bomega_{i,a}\bpi\in\bp\bor_\xi$ with $\lambda_1=\wt\bpi$. Writing $\lambda_1=\omega_{i_1}+\cdots+\omega_{i_k}$ take  $\xi: I\to \bz_+$ such that   \begin{gather*} \xi(m)=m,\ \ 1\le m\le i_1,\ \ \ \ \xi(i_k+j) = \xi(i_k)+(-1)^kj, \ \ 1\leq j\leq n-i_k, \ \ {\rm and} \
\\ \xi(i_{j+1}) - \xi(i_j) = (-1)^{j} (i_{j+1}-i_j), \ \ 1\leq j\leq k-1.
\end{gather*} 
If $k=1$ then $\bomega_{i,i-1}\bomega_{i_1,i_1+1}\in\bp\bor_\xi$ and otherwise $\bomega_{i,i-1}\bomega(i_1,i_k)\in\bp\bor_\xi$ and  the Lemma is proved.
\end{pf}

\subsection{}   Given  a  module $V $  for $\lie {sl}_{n+1}[t]$    and $z\in\bc$ denote by $V^z$ the $\lie {sl}_{n+1}[t]$-module with underlying vector space $V$ and action given by,  $$(x\otimes t^r) w= (x\otimes (t+z)^r)w,\ \ x\in\lie {sl}_{n+1},\ \  r\in\bz_+,\  w\in V.$$  Suppose that    $V_1, V_2$ are     cyclic finite--dimensional $\lie {sl}_{n+1}[t]$-modules  with cyclic vectors  $v_1$ and $v_2$ respectively. It was proved in \cite{FL} that if $z_1,z_2$ are distinct complex numbers, then   the tensor product
$V_1^{z_1}\otimes V_2^{z_2}$ is a cyclic $\lie {sl}_{n+1}[t]$-module generated by $v_1\otimes v_2$.  Further this module    admits a filtration by the non--negative integers: the $r$-th filtered piece of $V_1^{z_1}\otimes V_2^{z_2}$ is spanned by elements of the form $(y_1\otimes  t^{s_1})\cdots (y_m\otimes t^{s_m})(v_1\otimes v_2)$ where $m\ge 0$,  $y_1,\cdots, y_m\in\lie {sl}_{n+1}$, $s_1,\cdots, s_m\in\bz_+$ and  $s_1+\cdots +s_m\le r$. 
The associated graded  space   is called a  fusion product and is denoted 
 $V_1^{z_1}* V_2^{z_2}$. It admits a canonical    $\lie {sl}_{n+1}[t]$-module structure and 
 is  generated by  the image of  $v_1\otimes v_2$ and, $$\dim \left(V_1^{z_1}* V_2^{z_2}\right)=\dim V_1\dim V_2.$$
\begin{prop} \label{reseq} Let $\lambda_0\in P^+$, $\lambda_1\in P^+(1)$, $\lambda=2\lambda_0+\lambda_1$,  $1\le i<\min\lambda_1$ and  $z_1\ne z_2\in\bc$.  There exists a   surjective map  of $\lie{sl}_{n+1}[t]$--modules $$M(\omega_i,\lambda)\to M^{z_1}(0,\lambda)*M^{z_2}(0,\omega_{i}),\ \ \ m_{i,\lambda}\to m_{0,\lambda}* m_{0,\omega_{i}}.$$ In particular,
 $\dim M(\omega_i,\lambda)\ge \dim M(0,\lambda)\dim M(0,\omega_{i}).$\
\end{prop}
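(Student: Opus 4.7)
The plan is to verify that the element $w := m_{0,\lambda}*m_{0,\omega_i}$ in the fusion product satisfies all the defining relations of $M(\omega_i,\lambda)$ given in \eqref{localweyld} and \eqref{mil}. Once this is done, the universal property of $M(\omega_i,\lambda)$ produces a homomorphism sending $m_{i,\lambda}\mapsto w$, and this map is surjective because $w$ generates the fusion product by the Feigin--Loktev result quoted just above the statement. The dimension inequality then follows from the standard equality $\dim(V_1^{z_1}*V_2^{z_2}) = \dim V_1 \dim V_2$.

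First I would dispose of the relations not involving $t$-powers of positive degree. Because $\lie{sl}_{n+1}\otimes 1$ preserves the filtration grade, the fusion product is isomorphic to $M(0,\lambda)\otimes M(0,\omega_i)$ as an $\lie{sl}_{n+1}$-module; hence $w$ is a highest weight vector of weight $\lambda+\omega_i$ annihilated by each $x_p^+\otimes 1$, and the Serre relations $(x_p^-\otimes 1)^{(\lambda+\omega_i)(h_p)+1}\,w=0$ are guaranteed by integrability. For the Cartan relations $(h\otimes t^r)w=0$ with $r\ge 1$, expanding $(t+z_s)^r$ and using $(h\otimes t^s)m=0$ for $s\ge 1$ in each factor shows that $(h\otimes t^r)(m_{0,\lambda}\otimes m_{0,\omega_i})$ is a scalar multiple of $m_{0,\lambda}\otimes m_{0,\omega_i}$; this element lies in filtration degree $0 < r$, so its image in the grade-$r$ piece vanishes.

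The substance of the argument is verifying the two relations in \eqref{mil}. Here I would use the standard mechanism: if $f(t)\in\bc[t]$ has degree $d$ and $(x\otimes f(t))(m_{0,\lambda}\otimes m_{0,\omega_i})=0$ in the tensor product, then the top-degree coefficient $(x\otimes t^d)$ annihilates $w$ in the associated graded. For the relation $(x_p^-\otimes t^{(\lambda_0+\lambda_1+\omega_i)(h_p)})w=0$, I would choose
\[ f(t)=(t-z_1)^{(\lambda_0+\lambda_1)(h_p)}(t-z_2)^{\omega_i(h_p)}, \]
so that the first tensor factor is killed by the $i=0$ instance of \eqref{mil} for $M(0,\lambda)$ and the second is killed by $(x_p^-\otimes t^{\omega_i(h_p)})m_{0,\omega_i}=0$. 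For the relation $(x_\alpha^-\otimes t^{\lambda_0(h_\alpha)+1})w=0$ with $\alpha=\alpha_{j,k}\in R^+(\lambda_1)$, I would take $f(t)=(t-z_1)^{\lambda_0(h_\alpha)+1}$: the first factor vanishes by \eqref{mil} for $M(0,\lambda)$, while the second factor vanishes because $i<\min\lambda_1\le j$ forces $\omega_i-\alpha_{j,k}$ not to be a weight of $V(\omega_i)$, so $(x_\alpha^-\otimes 1)m_{0,\omega_i}=0$; this combines with the isomorphism $M(0,\omega_i)\cong \ev_0^*V(\omega_i)$, which makes $\lie{sl}_{n+1}\otimes t\bc[t]$ act trivially on $m_{0,\omega_i}$, to ensure the whole polynomial factor annihilates $m_{0,\omega_i}$.

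The main obstacle is the careful bookkeeping: one must choose $f(t)$ so that its degree matches exactly the exponent in the target relation of $M(\omega_i,\lambda)$, and one must double-check that the resulting polynomial annihilates each factor in the correspondingly shifted module. Beyond this, the proof is a direct translation of the relations via the associated graded construction, and the dimension inequality follows as a formal consequence.
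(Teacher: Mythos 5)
Your proof is correct and follows the same overall route as the paper's: verify each defining relation of $M(\omega_i,\lambda)$ for $m_{0,\lambda}*m_{0,\omega_i}$ by exhibiting a monic polynomial $f$ of the right degree with $(x\otimes f(t))(m_{0,\lambda}\otimes m_{0,\omega_i})=0$ in the tensor product, then pass to the associated graded. The Cartan relations, the Serre relations, and the first family in \eqref{mil} are handled with exactly the paper's polynomials. The one place you genuinely diverge is the second family in \eqref{mil}: for $\alpha=\alpha_{j,k}\in R^+(\lambda_1)$ you take $f(t)=(t-z_1)^{\lambda_0(h_\alpha)+1}$ whereas the paper displays $f(t)=(t-z_1)^{\lambda_0(h_\alpha)}(t-z_2)$. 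Your choice is the cleaner one: in $M^{z_1}(0,\lambda)$ it shifts to $t^{\lambda_0(h_\alpha)+1}$ and kills $m_{0,\lambda}$ exactly by \eqref{mil}, while in $M^{z_2}(0,\omega_i)$ one has $x_\alpha^- m_{0,\omega_i}=0$ because $j\ge\min\lambda_1>i$ puts $\omega_i-\alpha$ outside the weight lattice of $V(\omega_i)$ --- the weight observation you make explicit. With the paper's polynomial the first factor instead shifts to $t^{\lambda_0(h_\alpha)}(t+z_1-z_2)$, and the resulting term $(z_1-z_2)(x_\alpha^-\otimes t^{\lambda_0(h_\alpha)})m_{0,\lambda}$ need not vanish (take $\lambda_0=0$, where $\lambda(h_\alpha)=2$ forces $x_\alpha^- m_{0,\lambda}\ne 0$); the displayed equality in the tensor product is thus not literal, although the element does sit in filtration degree $\le\lambda_0(h_\alpha)$, so the conclusion drawn from it in the associated graded survives. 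Your variant avoids that filtration subtlety entirely.
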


\begin{pf} The proposition follows if we prove that the  element  $\bom:= m_{0,\lambda}* m_{0,\omega_{i}}$ (which generates $M^{z_1}(0,\lambda)*M^{z_2}(0,\omega_{i})$) satisfies the same relations as $m_{i,\lambda}$.  We first prove that $\bom$ satisfies the three relations in \eqref{localweyld}. The first relation in that equation is true in  the tensor product $M^{z_1}(0,\lambda)\otimes M^{z_2}(0,\omega_{i})$ and hence hold in the fusion product as well. For the second relation, we use the definition of $M^{z_1}(0,\lambda)$ and $M^{z_2}(0,\omega_i)$ to see that \begin{gather*}(h\otimes (t-z_1)^r)(m_{0,\lambda}\otimes m_{0,\omega_{i}})=((h\otimes t^r) m_{0,\lambda})\otimes m_{0,\omega_{i}} +  m_{0,\lambda}\otimes (h\otimes (t+z_2-z_1)^r)m_{0,\omega_{i}}.\end{gather*} where the action on the right hand side is in $M(0,\lambda)\otimes M(0,\omega_i)$.
If $r=0$ the relation holds in the tensor product and we are done. If $r\geq 1$, 
the  first  term on the right hand side is zero  and
the second term is $\omega_{i}(h)(z_2-z_1)^r (m_{0,\lambda}\otimes m_{0,\omega_{i}})$. Hence$$ (h\otimes (t-z_1)^r)(m_{0,\lambda}\otimes m_{0,\omega_{i}}) \in\bu(\lie{sl}_{n+1}[t])[0],$$ and so 
in the associated graded space we get 
 $$(h\otimes t^r)\bom=(h\otimes (t-z_1)^r)\bom=0,\ \ r\ge 1.$$
The third relation in \eqref{localweyld} is  immediate  from the finite--dimensional representation theory of $\lie{sl}_{n+1}$.  Next  a straightforward calculation gives, 
 $$(x_{p}^-\otimes (t-z_1)^{(\lambda_0+\lambda_1)(h_p)}(t-z_2)^{\omega_{i}(h_p)})(m_{0,\lambda}\otimes m_{0,\omega_{i}})=0, \ \ 1\le p\le n,$$ and
$$ (x_\alpha^-\otimes (t-z_1)^{\lambda_0(h_\alpha)}(t-z_2))(m_{0,\lambda}\otimes m_{0,\omega_{i}})=0,\ \ \alpha\in R^+(\lambda_1).$$
 This means that in the fusion product we have 
 $$(x_{p}^-\otimes t^{(\lambda_0+\lambda_1)(h_p)+\omega_{i}(h_p)})\bom=0, \ \ 1\le p\le n,$$ and
$$ (x_\alpha^-\otimes t^{\lambda_0(h_\alpha)+1})\bom=0,\ \ \alpha\in R^+(\lambda_1), $$ which proves that $\bom$ satisfies the relations in \eqref{mil}. This completes the proof of the proposition.
\end{pf}

\subsection{} We deduce some additional relations satisfied by $m_{i,\lambda}$. Note that by  the second relation in \eqref{localweyld} we get for $\alpha\in R^+$, $r\in\bz_+$,
$$(x_\alpha^\pm \otimes t^{r}) m_{i,\lambda} = 0 \implies (h_\alpha\otimes t^p)(x_\alpha^\pm  \otimes t^{r} )m_{i,\lambda} = 0\implies (x_\alpha^\pm  \otimes t^{r+p} )m_{i,\lambda} =0,\ \  p \in \bz_+.$$
Together with the  first relation in \eqref{mil} we  have by a simple induction on $k-j$ that for all $1\le j\le k\le n$,
\begin{equation}\label{ij}  (x^-_{j,k}\otimes t^r) m_{i,\lambda}=0,\ \ {\rm{if}} \ \ r\ge(\lambda_0+\lambda_1 + \omega_i)(h_{j,k}).\end{equation}
Since
 $(x_{j}^-\otimes t^{\lambda_0(h_{j})+1})m_{i,\lambda}=0$, $1\le j\le n$, a simple  calculation  (see \cite{CV} for instance) shows that  $$0=  (x_{j}^+\otimes t)^{2\lambda_0(h_{j})} (x_{j}^-\otimes 1)^{2\lambda_0(h_{j})+2} m_{i,\lambda}=(x_{j}^-\otimes t^{\lambda_0(h_{j})})^2 m_{i,\lambda}.$$
 If  $\alpha_{j,k} \in R^+(\lambda_1)$ then by using the preceding two relations we get\begin{equation}\label{conseq}0=  (x^-_{j+1, k}\otimes t^{\lambda_0(h_{j+1,k})+1})(x_{j}^-\otimes t^{\lambda_0(h_{j})})^2 m_{i,\lambda}  =(x^-_{j, k}\otimes t^{\lambda_0(h_{j,k})+1})(x_{j}^-\otimes t^{\lambda_0(h_{j})})m_{i,\lambda}.
\end{equation}

\begin{prop}\label{rses} Suppose that $\lambda=2\lambda_0+\lambda_1$ with $\lambda_1\in P^+(1)$ and let $i<i_1=\min\lambda$.
There exists a right   exact sequence of $\lie {sl}_{n+1}[t]$--modules $$ M(\omega_{i-1}, \lambda-\omega_{i_1}+\omega_{i_1+1})\to M(\omega_i,\lambda)\to M(0,\lambda+\omega_{i})\to 0.$$
 \end{prop}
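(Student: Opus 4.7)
The plan is to build the right exact sequence from the short exact sequence \eqref{ses1}. By \eqref{ses1}, the kernel of the surjection $M(\omega_i,\lambda)\twoheadrightarrow M(0,\lambda+\omega_i)$ is the cyclic $\lie{sl}_{n+1}[t]$-submodule $K=\bu(\lie{sl}_{n+1}[t])\cdot v$, where $v:=(x_{i,i_1}^-\otimes t^{\lambda_0(h_{i,i_1})+1})m_{i,\lambda}$. Setting $\lambda':=\lambda-\omega_{i_1}+\omega_{i_1+1}$, an elementary computation in the root lattice gives $\alpha_{i,i_1}=-\omega_{i-1}+\omega_i+\omega_{i_1}-\omega_{i_1+1}$, so the weight of $v$ equals $\lambda'+\omega_{i-1}$, matching the weight of the generator $m_{i-1,\lambda'}$ of $M(\omega_{i-1},\lambda')$. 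Hence the proposition will follow once we construct a module homomorphism $M(\omega_{i-1},\lambda')\to M(\omega_i,\lambda)$ sending $m_{i-1,\lambda'}\mapsto v$, since its image is then forced to equal $K$.

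The existence of this homomorphism is equivalent to the assertion that $v$ satisfies each of the defining relations of $m_{i-1,\lambda'}$, i.e.\ those in \eqref{localweyld} and \eqref{mil} (with $(\lambda,\omega_i)$ replaced by $(\lambda',\omega_{i-1})$). The Cartan-type relations are immediate from commuting the relevant element past $x_{i,i_1}^-\otimes t^{\lambda_0(h_{i,i_1})+1}$ and invoking the corresponding relations on $m_{i,\lambda}$. The annihilation $(x_p^+\otimes 1)v=0$ is nontrivial only for $p\in\{i,i_1\}$, where $[x_p^+,x_{i,i_1}^-]$ is a root vector $x_\beta^-$ for a proper subroot $\beta$ of $\alpha_{i,i_1}$; the resulting term $(x_\beta^-\otimes t^{\lambda_0(h_{i,i_1})+1})m_{i,\lambda}$ vanishes by \eqref{ij}, since the exponent exceeds $(\lambda_0+\lambda_1+\omega_i)(h_\beta)$ by a nonnegative amount (a short calculation using $\lambda_1(h_p)=0$ for all $p<i_1$ and $\omega_i(h_\beta)\le 1$). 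The lower nilpotency $(x_p^-\otimes 1)^{(\lambda'+\omega_{i-1})(h_p)+1}v=0$ is forced by the weight of $v$ together with the fact that $v$ lies in a finite-dimensional $\lie{sl}_{n+1}$-module on which the simple root vectors act nilpotently.

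The remaining, and most delicate, task is to verify the relations of \eqref{mil} for $v$. For a simple root $\alpha_p$, the relation $(x_p^-\otimes t^{(\lambda'_0+\lambda'_1+\omega_{i-1})(h_p)})v=0$ is obtained by commuting $x_p^-$ past $x_{i,i_1}^-$ (the $\lie{sl}_{n+1}$-bracket $[x_p^-,x_{i,i_1}^-]$ is nonzero only for $p\in\{i-1,i_1+1\}$) and then applying \eqref{ij} to both the straight and commutator terms after a careful comparison of exponents. For roots $\alpha\in R^+(\lambda'_1)$, the case $\alpha\in R^+(\lambda_1)$ follows by the same commutation argument, while the main obstacle is the ``new'' root $\alpha_{i_1+1,j}$ joining $i_1+1$ to the next support node $j$ of $\lambda_1$; for this root one writes the root vector as an iterated bracket of simple root vectors and combines \eqref{conseq} with repeated applications of \eqref{ij} to obtain the required annihilation. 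A parallel analysis handles the case $\lambda_1(h_{i_1+1})=1$, in which the decomposition $\lambda'=2\lambda'_0+\lambda'_1$ has $\lambda'_0=\lambda_0+\omega_{i_1+1}$ and $\lambda'_1=\lambda_1-\omega_{i_1}-\omega_{i_1+1}$; here the bound on exponents shifts in compensation with the change in $R^+(\lambda'_1)$, and the same template applies.
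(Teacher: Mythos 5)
Your overall plan is exactly the paper's: start from the short exact sequence \eqref{ses1}, observe that the kernel is the cyclic submodule generated by $v=(x_{i,i_1}^-\otimes t^{\lambda_0(h_{i,i_1})+1})m_{i,\lambda}$, check the weight matches that of $m_{i-1,\lambda-\omega_{i_1}+\omega_{i_1+1}}$, and then verify that $v$ satisfies the defining relations of $M(\omega_{i-1},\lambda-\omega_{i_1}+\omega_{i_1+1})$. That is the same route the paper takes.

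However, there is a genuine gap in the verification of the first family of relations in \eqref{mil}. You claim that for each simple root $\alpha_p$ the relation $(x_p^-\otimes t^{(\lambda'_0+\lambda'_1+\omega_{i-1})(h_p)})v=0$ follows by commuting $x_p^-$ past $x_{i,i_1}^-$ and invoking \eqref{ij} on the straight and commutator pieces. This works for every $p$ \emph{except} $p\in\{i,i_1\}$, and those two cases are precisely the ones where a new ingredient is needed. Indeed, for $p\in\{i,i_1\}$ the bracket $[x_p^-,x_{i,i_1}^-]$ vanishes, so there is only the straight term
$(x_{i,i_1}^-\otimes t^{\lambda_0(h_{i,i_1})+1})(x_p^-\otimes t^{r_p})m_{i,\lambda}$ with $r_p=\lambda_0(h_p)$. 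But the threshold in \eqref{ij} for the simple root $\alpha_p$ is $(\lambda_0+\lambda_1+\omega_i)(h_p)=\lambda_0(h_p)+1$ (for $p=i$ because of the $\omega_i$, for $p=i_1$ because $\lambda_1(h_{i_1})=1$), so $r_p$ falls short by one and $(x_p^-\otimes t^{r_p})m_{i,\lambda}$ is not zero. The needed vanishing is
\[
\bigl(x_{i,i_1}^-\otimes t^{\lambda_0(h_{i,i_1})+1}\bigr)\bigl(x_p^-\otimes t^{\lambda_0(h_p)}\bigr)m_{i,\lambda}=0,\qquad p\in\{i,i_1\},
\]
which is exactly the content of the Garland-type identity \eqref{conseq} (and its mirror with the simple root taken at the other end of $\alpha_{i,i_1}$). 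The paper singles out this case explicitly (``If $p\in\{i,i_1\}$ this follows from \eqref{conseq}''). Your plan invokes \eqref{conseq} only for the new root $\alpha_{i_1+1,i_2}\in R^+(\lambda'_1)$; you also need it (or the underlying squared-root-vector argument $(x_p^-\otimes t^{\lambda_0(h_p)})^2m_{i,\lambda}=0$) for these two simple roots, and without it the relation check does not close.
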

 \begin{pf} Set
 \begin{gather*}s=(\lambda_0+\lambda_1)(h_{i,i_1}),\ \ \ \min(\lambda_1-\omega_{i_1})=i_2,\\ \lambda_2=\lambda-\omega_{i_1}+\omega_{i_1+1}=2(\lambda_0+\delta_{i_1+1,i_2}\omega_{i_1+1})+\lambda_1-\omega_{i_1}+(1-\delta_{i_1+1,i_2})\omega_{i_1+1}.\end{gather*}In view of the short exact sequence in \eqref{ses1} it suffices to prove that the assignment $$m_{i-1,\lambda_2}\to (x_{i,i_1}^-\otimes t^{s})m_{i,\lambda}$$ extends to a well--defined morphism $M(\omega_{i-1},\lambda_2)\to M(\omega_i,\lambda)$ of $\lie g[t]$--modules. In other words it is enough to check that the element  $m=(x_{i,i_1}^-\otimes t^{s})m_{i,\lambda}$ satisfies the defining relations of $M(\omega_{i-1},\lambda_2)$. This is a tedious but straightforward checking. The first thing to check is that $m$ satisfies the defining relations of $W_{\loc}(\lambda_2+\omega_{i-1})$.  For this, we observe that for $1
 \le j\le n$, \begin{eqnarray*}&(x_j^+\otimes 1)m&= [(x_j^+\otimes 1),(x_{{i,i_1}}^-\otimes t^{s})] m_{i,\lambda}= (A\delta_{j,i}(x^-_{{i+1,i_1}}\otimes t^{s})+B\delta_{j,i_1}(x^-_{{i,i_1-1}}\otimes t^{s}))m_{i,\lambda},\end{eqnarray*} for some $A,B\in\bc$. It follows from \eqref{ij} that the right hand side is zero  once we note that
 $$s=(\lambda_0+\lambda_1)(h_{i,i_1})\ge 
 \max\left\{  (\lambda_0+\lambda_1)(h_{i+1,i_1}), \ \  (\lambda_0+\lambda_1)(h_{i,i_1-1})\right\}.$$
  For the second relation in \eqref{localweyld}  we observe   $$(h\otimes t^r)m = [h\otimes t^r, x_{i,i_1}^-\otimes t^{s}]m_{i,\lambda}= -(\delta_{r,0}\lambda-\alpha_{i,i_1})(h)(x^-_{{i,i_1}}\otimes t^{s+r})m_{i,\lambda}. $$ If $r\ge 1$ then $s+r\ge(\lambda_0+\lambda_1+\omega_i)(h_{i,i_1})$ and hence the right hand side is zero by \eqref{ij}.
   The final relation in  \eqref{localweyld} holds    by the standard representation theory of $\lie{sl}_{n+1}$. Next we check that $m$ satisfies the relations in \eqref{mil}. We first show that  \begin{gather*} (x_p^-\otimes t^{r_p})m=0,\ \ 
r_p = (\lambda_0+\lambda_1 - \omega_{i_1}+\omega_{i_1+1} + \omega_{i-1})(h_p), \ \ 1\le p\le n.\end{gather*}
If $p\in\{i,i_1\}$ this follows from \eqref{conseq}.
 Assume that $p\notin\{i,i_1\}$.
 Then
 $$(x^-_p\otimes t^{r_p})m=\begin{cases} (x_{{i,i_1}}^-\otimes t^{s})(x_p^-\otimes  t^{r_p})m_{i,\lambda},\ \ p\notin\{i-1,i_1+1\},\\
  (x_{{i-1,i_1}}^-\otimes t^{s+r_{i-1}})m_{i,\lambda},\ \ p=i-1,\\
   (x_{{p,i_1+1}}^-\otimes t^{s+r_{i_1+1}})m_{i,\lambda},\ \ p=i_1+1<i_2,\\
  [x^-_{i,i_1-1}\otimes t^{s}, x^-_{i_1,i_1+1}\otimes t^{r_{i_1+1}}]m_{i,\lambda},\ \ p=i_1+1=i_2.
 \end{cases} $$
 If $p\notin\{i-1,i_1+1\}$ then $r_p=(\lambda_0+\lambda_1+\omega_i)(h_p)$ and $(x^-_p\otimes t^{r_p})m_{i,\lambda}=0$.
If $p=i-1$, then $$s+r_{i-1}=(\lambda_0+\lambda_1)(h_{i,i_1})+
(\lambda_0+\lambda_1+\omega_{i-1})(h_{i-1})
= (\lambda_0+\lambda_1+\omega_i)(h_{i-1,i_1})$$ and \eqref{ij} gives $(x_{{i-1,i_1}}^-\otimes t^{s+r_{i-1}})m_{i,\lambda}=0$.
  If $p=i_1+1$ and $i_1+1\neq i_2$ a similar argument shows that $ (x_{{i,i_1+1}}^-\otimes t^{s+r_{i_1+1}})m_{i,\lambda}=0$. 
  If $p=i_1+1=i_2$, then one checks $$(x^-_{i,i_1-1}\otimes t^{s})m_{i,\lambda}=0= (x^-_{i_1,i_1+1}\otimes t^{r_{i_1+1}})m_{i,\lambda}.$$ In all cases  the first relation in  \eqref{mil} is now established.  
The  second relation in \eqref{mil} follows if we prove that  
$$ (x_{\alpha}^-\otimes t^{(\lambda_0+\delta_{i_1+1,i_2}\omega_{i_1+1})(h_{\alpha})+1})(x_{{i,i_1}}^-\otimes t^{s})m_{i,\lambda} = 0,\ \ \alpha\in R^+(\lambda_1-\omega_{i_1}+(1-2\delta_{i_1+1,i_2})\omega_{i_1+1}).$$ If $i_1+1=i_2$, then $$ R^+(\lambda_1-\omega_{i_1}+(1-2\delta_{i_1+1,i_2})\omega_{i_1+1})\subset R^+(\lambda_1)-\{\alpha_{i_1,i_1+1}\}
$$ and if $i_1+1<i_2$ then 
$$R^+(\lambda_1-\omega_{i_1}+(1-2\delta_{i_1+1,i_2})\omega_{i_1+1})= (R^+(\lambda_1)-\{\alpha_{i_1,i_2}\})\cup \{\alpha_{i_1+1,i_2}\}.$$
If
 $\alpha\neq \alpha_{i_1+1,i_2}$ then  $[x_{\alpha}\otimes t^r,x_{i,i_1}\otimes t^s]= 0$, for each $r\in \bz_+$ and hence we get
$$(x_{\alpha}^-\otimes t^{\lambda_0(h_{\alpha})+1})(x_{i,i_1}^-\otimes t^{s})m_{i,\lambda}= (x_{i,i_1}^-\otimes t^{s})(x_{\alpha}^-\otimes t^{\lambda_0(h_{\alpha})+1})m_{i,\lambda} = 0.$$
If $\alpha = \alpha_{i_1+1,i_2}$ then  $i_1+1<i_2$ and so by the defining relations of $M(\omega_i,\lambda)$ we have
$$(x^-_{i_1,i_2}\otimes t^{\lambda_0(h_{i_1,i_2})+1})m_{i,\lambda}= 0 = (x^-_{i,i_1-1}\otimes t^{\lambda_0(h_{i, i_1-1})+1})m_{i,\lambda}=0,$$ and so $$(x_{i,i_2}^-\otimes t^{\lambda_0(h_{i,i_2})+2})m_{i,\lambda}=0.$$ It follows that
 $$(x_{i_1+1,  i_2}^-\otimes t^{\lambda_0(h_{i_1+1,i_2})+1})(x^-_{i,i_1}\otimes t^{s})m_{i,\lambda} = A(x_{i,i_2}^-\otimes t^{\lambda_0(h_{i,i_2})+2})m_{i,\lambda}=0,\ \ A\in\bc,$$
which completes the proof of \eqref{mil} and so also of the Proposition.
 \end{pf}

\subsection{} The proof of Proposition \ref{csesfa} is completed in the course of establishing the following claim: for  $\lambda=2\lambda_0+\lambda_1\in P^+$ and $i<\min\lambda_1$, we have \begin{gather}\label{mfusion}\dim M(\omega_i,\lambda)=\dim M(\omega_i,0)\dim M(0,\lambda).\end{gather} 
The claim is  proved  by an induction on $i$. Induction begins at  $i=0$ when there is nothing to prove since $M(0,0)\cong \bc$. Otherwise using Proposition \ref{rses} we have $$\dim M(\omega_i,\lambda)\le \dim M(\omega_{i-1},\lambda-\omega_{i_1}+\omega_{i_1+1})+\dim M(0,\lambda+\omega_i).$$ The following equality is clear if $i=1$, 
and otherwise holds by the inductive hypothesis,
$$\dim M(\omega_{i-1},\lambda-\omega_{i_1}+\omega_{i_1+1})=\dim M(0,\omega_{i-1})\dim M(0,\lambda-\omega_{i_1}+\omega_{i_1+1}),$$  and hence $$\dim M(\omega_i,\lambda)\le \dim M(0,\omega_{i-1})\dim M(0,\lambda-\omega_{i_1}+\omega_{i_1+1})+\dim M(0,\lambda+\omega_i).$$ By Proposition \ref{reseq} we have  $\dim M(\omega_i,0)\dim M(0,\lambda)\le \dim M(\omega_i,\lambda)$ and hecne we get 
$$\dim M(\omega_i,0)\dim M(0,\lambda)\le  \dim M(\omega_{i-1})\dim M(0,\lambda-\omega_{i_1}+\omega_{i_1+1})+\dim M(0,\lambda+\omega_i).$$ Lemma \ref{geq} now shows that all the inequalities are actually equalities and the proof of the inductive step is complete. Notice that we have also proved that the inequality in \eqref{geqxi} is an equality and so the proof of Proposition \ref{csesfa} is also complete.

\end{document}